\newtheorem{thm}{Theorem}[section]
\newtheorem{lem}[thm]{Lemma}
\newtheorem{prop}[thm]{Proposition}
\theoremstyle{definition}
\newtheorem{defn}[thm]{Definition}
\newtheorem{rem}[thm]{Remark}
\newcommand\bI{\mathbbm{1}}
\newcommand\bN{\mathbb{N}}
\newcommand\bQ{\mathbb{Q}}
\newcommand\bR{\mathbb{R}}
\newcommand\bS{\mathbb{S}}
\newcommand\bZ{\mathbb{Z}}
\newcommand\cB{\mathcal{B}}
\newcommand\cD{\mathcal{D}}
\newcommand\cF{\mathcal{F}}
\newcommand\cI{\mathcal{I}}
\newcommand\cP{\mathcal{P}}
\newcommand\cS{\mathcal{S}}
\newcommand\cM{\mathcal{M}}
\newcommand\cQ{\mathcal{Q}}
\newcommand\cX{\mathcal{X}}
\newcommand{\dd}{\,\mathrm{d}}
\DeclareMathOperator*{\esssup}{ess\,sup}
\newcommand{\supp}{\mathrm{supp}}
\newcommand{\aint}{-\hspace{-0.40cm}\int}
\newcommand{\mysection}[1]{\section{#1}
\setcounter{equation}{0}}
\def\XXint#1#2#3{{\setbox0=\hbox{$#1{#2#3}{\int}$ }
\vcenter{\hbox{$#2#3$ }}\kern-.58\wd0}}
\def\@tocline#1#2#3#4#5#6#7{\relax
  \ifnum #1>\c@tocdepth 
  \else
    \par \addpenalty\@secpenalty\addvspace{#2}%
    \begingroup \hyphenpenalty\@M
    \@ifempty{#4}{%
      \@tempdima\csname r@tocindent\number#1\endcsname\relax
    }{%
      \@tempdima#4\relax
    }%
    \parindent\z@ \leftskip#3\relax \advance\leftskip\@tempdima\relax
    \rightskip\@pnumwidth plus4em \parfillskip-\@pnumwidth
    #5\leavevmode\hskip-\@tempdima
      \ifcase #1
       \or\or \hskip 1em \or \hskip 2em \else \hskip 3em \fi%
      #6\nobreak\relax
    \dotfill\hbox to\@pnumwidth{\@tocpagenum{#7}}\par
    \nobreak
    \endgroup
  \fi}
\begin{document}
\title[Characterizations of weighted Besov and Triebel-Lizorkin spaces with variable smoothness]
{Characterizations of weighted Besov and Triebel-Lizorkin spaces with variable smoothness}

\subjclass[2020]{46E35,42B35,42B25}

\keywords{Weighted Besov space, Weighted Triebel-Lizorkin space, Variable smoothness}

\thanks{}

\maketitle
\begin{center}
\vspace{-1em}
\small
\textsc{Jae-Hwan Choi}\\
School of Mathematics, Korea Institute for Advanced Study, 85 Hoegiro Dongdaemun-gu, Seoul 02455, Republic of Korea\\
\texttt{jhchoi@kias.re.kr}

\vspace{0.5em}
\textsc{Jin Bong Lee}\\
Research Institute of Mathematics, Seoul National University, Gwanak-ro 1, Gwanak-gu, Seoul 08826, Republic of Korea\\
\texttt{jinblee@snu.ac.kr}

\vspace{0.5em}
\textsc{Jinsol Seo}\\
School of Mathematics, Korea Institute for Advanced Study, 85 Hoegiro Dongdaemun-gu, Seoul 02455, Republic of Korea\\
\texttt{seo9401@kias.re.kr}

\vspace{0.5em}
\textsc{Kwan Woo}\\
Departement Mathematik und Informatik, Universit\"at Basel, CH-4051 Basel, Switzerland\\
\texttt{kwan.woo@unibas.ch}
\vspace{1em}
\end{center}

\begin{abstract}
In this paper, we study different types of weighted Besov and Triebel-Lizorkin spaces with variable smoothness. 
The function spaces can be defined by means of the Littlewood-Paley theory in the field of Fourier analysis,
while there are other norms arising in the theory of partial differential equations such as Sobolev-Slobodeckij spaces.
It is known that two norms are equivalent when one considers constant regularity function spaces without weights.
We show that the equivalence still holds for variable smoothness and weights, which is accomplished by using shifted maximal functions, Peetre's maximal functions, and the reverse H\"older inequality.
Moreover, we obtain a weighted regularity estimate for time-fractional evolution equations and a generalized Sobolev embedding theorem without weights.
\end{abstract}

\mysection{Introduction and main results}

Besov and Triebel-Lizorkin spaces have been crucial tools for measuring the smoothness of functions in the theory of partial differential equations(PDEs) and approximation theory.
Due to their significance, the \emph{characterization} of these spaces---namely, the study of several equivalent norms or representations describing the same function space---has also become an important topic in both PDE theory and harmonic analysis. 

For instance, two representative types of norms are frequently and separately used in Fourier analysis and in PDE theory defined by the Littlewood--Paley decomposition and by difference operators, respectively.
Let us consider norms of Besov spaces $B_{2,2}^s(\bR^d)$, which coincides with the Sobolev space $H^s(\mathbb{R}^d)$.
Then, it follows from the Littlewood-Paley theory that
\begin{align}\label{Besov_norm_lp}
	\|f\|_{B_{2,2}^s(\bR^d)} = \|S_0 f\|_{L_2(\bR^d)} + \left( \sum_{j\geq0} 2^{2sj} \| \Delta_j f\|_{L_2(\bR^d)}^2 \right)^{1/2},
\end{align}
where $S_0$ and $\Delta_j$ are the Littlewood-Paley projection operators whose definitions are given by \eqref{ineq_230315_1246}.
It is well-known the quantity \eqref{Besov_norm_lp} is equivalent to 
\begin{align}\label{Besov_norm_diff1}
	\|f \|_{W_{2}^{n}(\bR^d)} +\left( \int_{|h| \leq 1} \left| \frac{\left\|\mathcal{D}_{h}^2 \left(f^{(n)}\right)\right\|_{L_2(\mathbb{R}^d)} }{|h|^\alpha} \right|^2 ~\frac{\mathrm{d}h}{|h|^d}\right)^{1/2},
\end{align}
where $s = n+\alpha$, $n\in\mathbb{Z}$, $\alpha\in[0,1)$, and $f^{(n)}$ denotes the $n$-th derivative of $f$.
Note that $\mathcal{D}_h$ is a difference operator given by $\cD_h f (x) = f(x+h) - f(x)$ and $\cD_h^L = \cD_h \cD_h^{L-1}$.

The equivalence between \eqref{Besov_norm_lp} and \eqref{Besov_norm_diff1} for unweighted spaces are well known, \textit{e.g.}, \cite{triebel1983theory}. 
Later, Kaljabin and Lizorkin \cite{Lizorkin1987generalized} extended these results to the case of variable smoothness, where the dyadic factor $2^j$ and the spatial scale $|h|^{\alpha}$ in \eqref{Besov_norm_lp} and \eqref{Besov_norm_diff1} are replaced by general functions $\phi(2^j)$ and $\phi(|h|^{-1})$, respectively.
These works illustrate how different analytic representations of a space lead to equivalent descriptions of regularity, and thus play a foundational role in the development of modern function space theory. 

However, when extending the setting to \emph{weighted} spaces, $L_2(\mathbb{R}^d, w\,\mathrm{d}x)$, the equivalence of norms such as \eqref{Besov_norm_lp} and \eqref{Besov_norm_diff1} remains far less understood, despite their fundamental importance in applications to PDEs and harmonic analysis. 
Weighted characterizations of Besov and Triebel--Lizorkin spaces have been studied extensively for certain types of norms: 
H.-Q. Bui \cite{Bui1982, Bui1984} and H.-Q. Bui, M. Paluszy\'nski, M. H. Taibleson \cite{BPT1996} obtained characterizations for $A_\infty$ weights using heat kernels and smooth kernel representations, and more recently H.-Q. Bui, T. A. Bui, and X. T. Duong \cite{BuiDuong_2020} extended these to spaces of homogeneous type.
In a different direction, L.I. Hedberg and Y. Netrusov \cite{HN07} provided a unified axiomatic framework encompassing a broad class of function and distribution spaces, including the scales of Besov and Triebel-Lizorkin spaces with variable smoothness. 
However, the characterization of \cite{HN07} is clearly different from that of this paper.
To the best of our knowledge, the equivalence of norms of types between \eqref{Besov_norm_lp} and \eqref{Besov_norm_diff1} in the weighted framework has remained open. 
One of the main motivations of this paper is to fill this fundamental gap.

The main goal of this paper is that the characterizations of \cite{Lizorkin1987generalized,triebel1983theory} are still valid, even if we consider certain variable smoothness and the Muckenhoupt $A_p$ weights on $\bR^d$.
Precisely, we suggest a class of variable smoothness $\phi$ for given $A_p$ classes.
As an appropriate choice for $\phi$, we consider a class $\cI_o(a,b)$ with $a, b\in\bR$ introduced in \cite{Gus_Pee1977}, where we describe the class in Definition~\ref{def_pseudoconcave}.
Then, for each $A_p$ class, $a,b$ of $\cI_o(a,b)$ are given in terms of self-improving and reverse H\"older properties of $A_p$ weights.
Moreover, in the case of Besov spaces, we show that $a$ of $\cI_o(a,b)$ is a threshold for our characterizations in restricted situations (see Remark~\ref{rem:mainthm}).
That is, one can say \emph{weights determine regularity}.
Similar phenomena also occur in studies of trace theorem for Volterra type equations \cite{CLSW2023trace} and regularity estimates for parabolic equations with pseudo-differential operators \cite{CKL2023regularity}, where function spaces equipped with $\phi \in \cI_o(a,b)$ and $w\in A_p(\bR^d)$ naturally arise.

Another motivation for our study arises from recent research on initial value problems of non-local evolution equations.
For instance, H. Dong and D. Kim \cite{Dong_Kim2021} considered the following time fractional parabolic equations
\begin{equation}\label{eq:timefrac}
\partial_t^{\alpha}u(t,x)=a^{ij}(t,x)u_{x^ix^j}(t,x), \,\, (t,x)\in(0,T)\times\bR^d,
\end{equation}
and they introduced a weighted Besov space as an initial data space in the sense of \eqref{Besov_norm_diff1} whose integral is taken over $\bR^d$.
On the other hand, D. Kim and K. Woo \cite{Kim_Woo2023} suggested another weighted Besov space given in terms of \eqref{Besov_norm_lp} as an initial data space in the result of a trace theorem for equation \eqref{eq:timefrac}.
One can ask whether initial data spaces arising in \cite{Dong_Kim2021} and \cite{Kim_Woo2023} are equivalent.
This kind of question is required for a complete well-posedness result on \eqref{eq:timefrac} and we present a negative answer to this question in the first part of Section~\ref{sec_apps}.
Thus, in the sense of the trace theorem in \cite{Kim_Woo2023}, one can say that Theorem~\ref{thm_lp_diff_equiv} suggests a suitable initial data space for equations \eqref{eq:timefrac} in the sense of \eqref{Besov_norm_diff1}.

\hfill

To introduce our main result, we first define function classes $\cI$ and $\cI_o$ related to the variable smoothness.
Definitions of $A_p$ weights and the reverse H\"older property are also given.
Then, we define the function spaces concerned in this paper.

\begin{defn}\label{def_pseudoconcave}
For a function $\phi:\bR_+ \left( := \left(0, \infty \right) \right)  \rightarrow \bR_+$, we define 
$$
s_\phi(\lambda) := \sup_{t>0}\frac{\phi(\lambda t)}{\phi(t)}\,,
$$
so that $s_{\phi}:\bR_+\rightarrow (0,\infty]$.
Observe that $s_\phi$ is \textit{submultiplicative}, \textit{i.e}., $s_\phi(\lambda\tau) \leq s_\phi(\lambda) s_\phi(\tau)$ for $\lambda, \tau>0$.
Together with the definition of $s_\phi(\lambda)$, we introduce the following class of functions.
For $a, b \in \mathbb{R}$ with $a \le b$, we set
	\begin{align}\label{ftn class}
		\mathcal{I}(a, b) := \{\phi : \textrm{$s_\phi(\lambda) = O(\lambda^a)$ as $\lambda \to 0$, and $s_\phi(\lambda) = O(\lambda^b)$ as $\lambda \to \infty$} \}.
	\end{align}
We also define $\mathcal{I}_o(a,b)$, where $o$ denotes that we replace $O$-notation with $o$-notation in \eqref{ftn class}. Clearly, $\mathcal{I}_o(a,b)\subsetneq \mathcal{I}(a,b)$.
\end{defn}

Note that for any $\phi \in \mathcal{I}(a,b) \neq \emptyset$ (or $\phi \in \mathcal{I}_o(a,b) \neq \emptyset$), one necessarily has $a \le b$.
Indeed, since $s\phi(\lambda)s_\phi(1/\lambda) \ge 1$ for all $\lambda>0$, the asymptotic bounds yield, for sufficiently large $\lambda$,
$$
1\leq s_{\phi}(\lambda)s_{\phi}(1/\lambda)\lesssim \lambda^{b}\lambda^{-a}=\lambda^{b-a},
$$
and hence $a\leq b$.
Throughout the paper, $d \in \mathbb{N}$ denotes the spatial dimension of $\mathbb{R}^d$.

\begin{defn}
	\begin{enumerate}[(i)]
\item     For $p\in(1,\infty)$, let $A_p=A_p(\bR^d)$ be the class of all nonnegative and locally integrable functions $w$ satisfying
\begin{align}
\label{def ap}
[w]_{A_p}:=\sup_{\cQ\,\text{cubes in }\bR^d}\left(\aint_{\cQ}w(x)dx\right)\left(\aint_{\cQ}w(x)^{-1/(p-1)}dx\right)^{p-1}<\infty.
\end{align}
The class of $A_1=A_1(\bR^d)$ is defined as a collection of nonnegative and locally integrable functions $w$ satisfying
$$
\cM w(x)\leq Cw(x) \quad \forall x\in\bR^d,
$$
where $\cM$ is the Hardy-Littlewood maximal operator and $C$ is independent of $x$.
The class $A_{\infty}=A_{\infty}(\bR^d)$ could be defined as the union of $A_p(\bR^d)$ for all $p\in[1,\infty)$, \textit{i.e.},
    $$
    A_\infty(\bR^d)=\bigcup_{p\in[1,\infty)}A_p(\bR^d).
    $$
\item For $s\in(1,\infty)$, let $RH_s=RH_s(\bR^d)$ be the class of all nonnegative and locally integrable functions $w$ satisfying
$$
[w]_{RH_{s}}:=\sup_{\cQ\,\text{cubes in }\bR^d}\left(\aint_{\cQ}w(x)^s\mathrm{d}x\right)^{1/s}\left(\aint_{\cQ}w(x)\mathrm{d}x\right)^{-1} < \infty.
$$
The class of $RH_{\infty}=RH_{\infty}(\bR^d)$ is defined as a collection of nonnegative and locally integrable functions $w$ satisfying
$$
\esssup_{x\in \cQ}w(x)\leq C\aint_{\cQ}w(x)\mathrm{d}x
$$
for all cubes $\cQ\subset \bR^d$. Here, $C$ is independent of $\cQ$.
\end{enumerate}
\end{defn}

$\mathcal{S}(\mathbb{R}^d)$ and $\mathcal{S}'(\mathbb{R}^d)$ denote the Schwartz class and its dual, respectively.
Let a function $\varphi$ be of $\mathcal{S}(\mathbb{R}^d)$ whose Fourier transform $\widehat{\varphi}$ is nonnegative, supported in a ball $B_2(0)$, and let $\widehat{\psi}(\cdot) = \widehat{\varphi}(\cdot) - \widehat{\varphi}(2 \cdot)$ so that $\sum_{j\in\mathbb{Z}} \widehat{\psi}_j = 1$ for $\xi \not=0$ where $\psi_j(\cdot) = 2^{jd} \psi(2^j\cdot)$.
Then for $f\in\cS'(\bR^d)$, we define the Littlewood-Paley projection operators $\Delta_j$ and $S_0$ by 
\begin{align}\label{ineq_230315_1246}
\widehat{\Delta_j f}(\xi) := \widehat{\psi}_j(\xi) \widehat{f}(\xi),\quad S_0f := \sum_{j\leq 0} \Delta_j f = \Phi \ast f,
\end{align}
respectively.
Here, $\Phi$ denotes a smooth low–frequency cutoff function defined by 
$\Phi := \sum_{j \leq 0} \psi_j$. 
Since $\widehat{\Phi}$ is an infinitely differentiable function with compact support $\overline{B_2(0)}$, it follows that $\Phi \in \mathcal{S}(\mathbb{R}^d)$ as well.
Together with \eqref{ineq_230315_1246}, we define weighted Besov and Triebel-Lizorkin spaces with variable smoothness.
\begin{defn}
\label{def}
    Let $p\in(1,\infty]$, $q\in[1,\infty]$, $M>0$ and $\phi\in \cI_o(0,M)$. 
If $p<\infty$, then we set $w\in A_p(\bR^d)$ and if $p=\infty$, then we set $L_p(\bR^d,w\,\mathrm{d}x):=L_{\infty}(\bR^d)$. 
    \begin{enumerate}[(i)]
    \item
    The space $B_{p,q}^{\phi}(\bR^d,w\,\mathrm{d}x)$ is defined by
    $$
    B_{p,q}^{\phi}(\bR^d,w\,\mathrm{d}x):=\{f\in\cS'(\bR^d):\|f\|_{B_{p,q}^{\phi}(\bR^d,w\,\mathrm{d}x)}<\infty\},
    $$
    where the norm $\|f\|_{B_{p,q}^{\phi}(\bR^d,w\,\mathrm{d}x)}$ is defined by
    \begin{eqnarray*}
    \|S_0f\|_{L_p(\bR^d,w\,\mathrm{d}x)}
    +\left\{
    \begin{array}{ll}
    \Big(\sum_{j=1}^{\infty}\phi(2^{j})^q\|\Delta_jf\|_{L_p(\bR^d,w\,\mathrm{d}x)}^q\Big)^{1/q}, &q<\infty,\\
    \sup_{j\in\bN}\phi(2^j)\|\Delta_jf\|_{L_p(\bR^d,w\,\mathrm{d}x)}, &q=\infty.
    \end{array}
    \right.
    \end{eqnarray*}
    We also define the homogeneous Besov norm
    \begin{eqnarray*}
    \|f\|_{\mathring{B}_{p,q}^{\phi}(\bR^d,w\,\mathrm{d}x)}
    :=\left\{
    \begin{array}{ll}
    \left(\sum_{j\in\bZ}\phi(2^j)^q\|\Delta_jf\|_{L_p(\bR,w\,\mathrm{d}x)}^q\right)^{1/q}, &q<\infty,\\
    \sup_{j\in\bZ}\phi(2^j)\|\Delta_jf\|_{L_p(\bR^d,w\,\mathrm{d}x)}, &q=\infty.
    \end{array}
    \right.
    \end{eqnarray*}
    
    \item
    The space $F_{p,q}^{\phi}(\bR^d,w\,\mathrm{d}x)$ is defined by
    $$
    F_{p,q}^{\phi}(\bR^d,w\,\mathrm{d}x):=\{f\in\cS'(\bR^d):\|f\|_{F_{p,q}^{\phi}(\bR^d,w\,\mathrm{d}x)}<\infty\},
    $$
    where the norm $\|f\|_{F_{p,q}^{\phi}(\bR^d,w\,\mathrm{d}x)}$ is defined by
    \begin{eqnarray*}
    \|S_0f\|_{L_p(\bR^d,w\,\mathrm{d}x)}
    + \left\{
    \begin{array}{ll}
    \Big\|\Big(\sum_{j=1}^{\infty}\phi(2^{j})^q|\Delta_jf|^q\Big)^{1/q}\Big\|_{L_p(\bR^d,w\,\mathrm{d}x)}, &q<\infty,\\
    \Big\|\sup_{j\in\bN}\phi(2^j)|\Delta_jf|\Big\|_{L_p(\bR^d,w\,\mathrm{d}x)}, &q=\infty.
    \end{array}
     \right.
    \end{eqnarray*}
    We also define the homogeneous Triebel-Lizorkin norm
    \begin{eqnarray*}
    \|f\|_{\mathring{F}_{p,q}^{\phi}(\bR^d,w\,\mathrm{d}x)}
    :=\left\{
    \begin{array}{ll}
    \left\|\left(\sum_{j\in\bZ}\phi(2^j)^q|\Delta_jf|^q\right)^{1/q}\right\|_{L_p(\bR,w\,\mathrm{d}x)}, &q<\infty,\\
    \left\|\sup_{j\in\bZ}\phi(2^j)|\Delta_jf|\right\|_{L_p(\bR^d,w\,\mathrm{d}x)}, &q=\infty.
    \end{array}
    \right.
    \end{eqnarray*}
    \end{enumerate}
\end{defn}
It is worth noting that the norms of 
$B_{p,q}^{\phi}(\mathbb{R}^d, w\,\mathrm{d}x)$ and $F_{p,q}^{\phi}(\mathbb{R}^d, w\,\mathrm{d}x)$ are independent of the choice of $\{\psi_j\}_{j\ge0}$. 
For a detailed justification of this independence, we refer to the discussion following Theorem 2.4 in \cite{Bui1982} and the references therein.

For the norms given in terms of difference operators, we define

\begin{defn}\label{def_diff}
	Let $p\in(1,\infty]$, $q\in[1,\infty]$, $M>0$, $\phi\in \cI_o(0,M)$, and denote by $L$ the smallest integer not less than $M$.
If $p<\infty$, then we set $w\in A_p(\bR^d)$ and if $p=\infty$, then we set $L_p(\bR^d,w\,\mathrm{d}x):=L_{\infty}(\bR^d)$. 
For a given function $f$, let $\cD_h^1 f (x) = \cD_h f (x) = f(x+h) - f(x)$ and $\cD_h^L = \cD_h \cD_h^{L-1}$ for $L = 2, 3, \cdots$.

    \begin{enumerate}[(i)]
    \item
    The space $\cB_{p,q}^{\phi}(\bR^d,w\,\mathrm{d}x)$ is defined by
    $$
    \cB_{p,q}^{\phi}(\bR^d,w\,\mathrm{d}x):=\{f\in\cS'(\bR^d):\|f\|_{\cB_{p,q}^{\phi}(\bR^d,w\,\mathrm{d}x)}<\infty\},
    $$
    where $\|f\|_{\cB_{p,q}^{\phi}(\bR^d,w\,\mathrm{d}x)}$ is given by
    \begin{eqnarray}\label{wBesov_norm_diff}
    \|f\|_{\cB_{p,q}^{\phi}(\bR^d,w\,\mathrm{d}x)}:=\|f\|_{L_p(\bR^d,w\,\mathrm{d}x)}
    +\|f\|_{\mathring{\cB}_{p,q}^\phi(\bR^d,w\dd x)},
    \end{eqnarray}
    and $\|f\|_{\mathring{\cB}_{p,q}^{\phi}(\bR^d,w\,\mathrm{d}x)}$ is also given by
    $$
    \|f\|_{\mathring{\cB}_{p,q}^\phi(\bR^d,w\dd x)}:=\left\{\begin{array}{ll} \Big(\int_{|h|\leq 1} \phi(|h|^{-1})^q  \left\| \cD_h^L f \right\|_{L_p(\bR^d, w\,\mathrm{d}x)}^q~\frac{\mathrm{d}h}{|h|^d}\Big)^{1/q}, &q<\infty,\\
    \sup_{|h|\leq 1}\phi(|h|^{-1}) \left\| \cD_h^L f \right\|_{L_p(\bR^d, w\,\mathrm{d}x)},&q=\infty.
    \end{array}
    \right.
    $$
    We also define the space $\mathscr{B}_{p,q}^\phi(\bR^d, w\,\mathrm{d}x)$ together with a norm given by \eqref{wBesov_norm_diff} with $\bR^d$ instead of $|h|\leq 1$.
    \item
    The space $\cF_{p,q}^{\phi}(\bR^d,w\,\mathrm{d}x)$ is defined by
    $$
    \cF_{p,q}^{\phi}(\bR^d,w\,\mathrm{d}x):=\{f\in\cS'(\bR^d):\|f\|_{\cF_{p,q}^{\phi}(\bR^d,w\,\mathrm{d}x)}<\infty\},
    $$
    where $\|f\|_{\cF_{p,q}^{\phi}(\bR^d,w\,\mathrm{d}x)}$ is given by
    \begin{eqnarray}\label{wTL_norm_diff}
    \|f\|_{\cF_{p,q}^{\phi}(\bR^d,w\,\mathrm{d}x)}:=\|f\|_{L_p(\bR^d,w\,\mathrm{d}x)}
    +\|f\|_{\mathring{\cF}_{p,q}^\phi(\bR^d,w\dd x)},
    \end{eqnarray}
    and $\|f\|_{\mathring{\cF}_{p,q}^{\phi}(\bR^d,w\,\mathrm{d}x)}$ is also given by
    $$
    \|f\|_{\mathring{\cF}_{p,q}^{\phi}(\bR^d,w\,\mathrm{d}x)}:=\left\{\begin{array}{ll}\Big\| \Big(\int_{|h|\leq 1} \phi(|h|^{-1})^q   |\cD_h^L f|^q~\frac{\mathrm{d}h}{|h|^d}\Big)^{1/q} \Big\|_{L_p(\bR^d, w\,\mathrm{d}x)}, &q<\infty,\\
    \Big\| \sup_{|h|\leq 1}\phi(|h|^{-1}) |\cD_h^L f| \Big\|_{L_p(\bR^d, w\,\mathrm{d}x)},&q=\infty.
    \end{array}
    \right.
    $$
    Similarly, we define the space $\mathscr{F}_{p,q}^\phi(\bR^d, w\,\mathrm{d}x)$ together with the norm given by \eqref{wTL_norm_diff} with $\bR^d$ instead of $|h|\leq 1$.
    \end{enumerate}
\end{defn}
It can be easily checked that all spaces introduced in Definitions \ref{def} and \ref{def_diff} are Banach spaces.
Together with Definitions~\ref{def} and \ref{def_diff}, we state our main results. 
\begin{thm}\label{thm_lp_diff_equiv}
Let $w \in A_p(\mathbb{R}^d)$ with $p \in (1, \infty]$, and we assume that
$$
M > \frac{d}{p}\left(R_w + \frac{1}{\Gamma_w} - 1\right).
$$ 
	Then 
	\begin{enumerate}[(i)]
	\item for $p\in(1,\infty]$, $q\in[1,\infty]$ and $\phi\in\cI_o \left(\frac{d}{p}\left(R_w+\frac{1}{\Gamma_w}-1\right),M \right)$,
 $$
 B_{p,q}^\phi(\bR^d, w~\mathrm{d}x)=\cB_{p,q}^\phi(\bR^d, w~\mathrm{d}x)=\mathscr{B}_{p,q}^\phi(\bR^d, w\,\mathrm{d}x).
 $$
	\item for $p,q\in(1, \infty)$ and $\phi\in\cI_o(\frac{dR_{w}}{p},M)$,
 $$
 F_{p,q}^\phi(\bR^d, w~\mathrm{d}x)=\cF_{p,q}^\phi(\bR^d, w~\mathrm{d}x)=\mathscr{F}_{p,q}^\phi(\bR^d, w\,\mathrm{d}x).
 $$
 \end{enumerate}
 Here, the constants $R_w$ and $\Gamma_w$ are defined by
 $$
 R_w:=\inf\{q:w\in A_q(\bR^d)\}\in[1,p],\quad \Gamma_w:=\sup\{s:w\in RH_s(\bR^d)\}\in[1,\infty].
 $$
\end{thm}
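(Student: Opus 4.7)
The plan is to split the three-way equality into the pair $\cB_{p,q}^\phi=\mathscr{B}_{p,q}^\phi$ and $B_{p,q}^\phi=\cB_{p,q}^\phi$, and analogously for the Triebel--Lizorkin scale. Since $\mathscr{B}_{p,q}^\phi\hookrightarrow\cB_{p,q}^\phi$ is tautological, the first pair reduces to the tail bound
\[
\int_{|h|>1}\phi(|h|^{-1})^q\,\|\cD_h^L f\|_{L_p(\bR^d,w\,\mathrm{d}x)}^q\,|h|^{-d}\,\mathrm{d}h \lesssim \|f\|_{L_p(\bR^d,w\,\mathrm{d}x)}^q,
\]
which follows from the crude estimate $\|\cD_h^L f\|_{L_p(\bR^d,w\,\mathrm{d}x)}\le 2^L\|f\|_{L_p(\bR^d,w\,\mathrm{d}x)}$ once one notes that $\phi\in\cI_o(0,M)$ combined with the submultiplicativity of $s_\phi$ forces a quantitative polynomial decay $\phi(t)\lesssim t^\epsilon$ as $t\to 0$ for some $\epsilon>0$. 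I will then concentrate on the Besov statement; the Triebel--Lizorkin case follows the same outline with the $\ell^q$-sum pushed inside the $L_p(\bR^d,w\,\mathrm{d}x)$ norm.

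\textbf{From Littlewood--Paley to differences.} For $B_{p,q}^\phi\hookrightarrow\cB_{p,q}^\phi$ I would decompose $f=S_0 f+\sum_{j\ge 1}\Delta_j f$ and establish the weighted Bernstein-type inequality
\[
\|\cD_h^L \Delta_j f\|_{L_p(\bR^d,w\,\mathrm{d}x)}\lesssim \min\bigl(1,(2^j|h|)^L\bigr)\,\|\Delta_j f\|_{L_p(\bR^d,w\,\mathrm{d}x)},
\]
which in the regime $2^j|h|\le 1$ follows from a Taylor expansion combined with the $A_p$-boundedness of $\cM$, and in the opposite regime from the triangle inequality. Writing $|h|^{-1}\sim 2^k$ and $c_j=\|\Delta_j f\|_{L_p(\bR^d,w\,\mathrm{d}x)}$, the claim reduces to the discrete convolution estimate $\sum_k\phi(2^k)^q\bigl(\sum_j \min(1,2^{(j-k)L})\,c_j\bigr)^q\lesssim \sum_j \phi(2^j)^q c_j^q$. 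Rewriting the weight via $\phi(2^k)/\phi(2^j)\le s_\phi(2^{k-j})$ and using the asymptotics $s_\phi(\lambda)=O(\lambda^M)$ at infinity and $s_\phi(\lambda)\lesssim \lambda^\epsilon$ at zero, the assumption $L\ge M$ renders the kernel summable in $k-j$, and Young's inequality on $\ell^q(\bZ)$ closes the estimate.

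\textbf{From differences to Littlewood--Paley.} The reverse inclusion $\cB_{p,q}^\phi\hookrightarrow B_{p,q}^\phi$ is the main content and proceeds by inverting the previous estimate. I would choose a radial Schwartz function $\eta$ with $\widehat\eta$ supported in an annulus and $\widehat\eta\equiv 1$ on the support of $\widehat\psi$, write $\Delta_j f=\eta_j*\Delta_j f$, and then express $\eta_j$ as an integral of $L$-th order differences against a well-localized kernel concentrated on the scale $|h|\sim 2^{-j}$; this Marchaud-type reconstruction is valid provided $L\ge M$. Taking $L_p(\bR^d,w\,\mathrm{d}x)$ norms of the resulting representation brings in Peetre's maximal function $\Delta_j^{*,N} f(x):=\sup_y|\Delta_j f(x-y)|/(1+2^j|y|)^N$, and the argument then reduces to the pointwise bound $\Delta_j^{*,N}f(x)\lesssim [\cM(|\Delta_j f|^r)(x)]^{1/r}$ valid for $N>d/r$, followed by $L_{p/r}(\bR^d,w\,\mathrm{d}x)$-boundedness of $\cM$.

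\textbf{Weighted thresholds and the main obstacle.} The regularity exponents in the statement are dictated precisely by the admissible range of the auxiliary parameter $r$. In the Triebel--Lizorkin case the $\ell^q$-sum is taken inside the $L_p(\bR^d,w\,\mathrm{d}x)$ norm, Fefferman--Stein vector-valued inequalities apply, $\cM$ must be bounded on $L_{p/r}(\bR^d,w\,\mathrm{d}x)$, and this forces $r<p/R_w$, producing the exponent $dR_w/p$. In the Besov case the $L_p(\bR^d,w\,\mathrm{d}x)$ norm is computed before the $\ell^q$-sum, and I would exploit this by combining the $A_p$ self-improvement with the reverse H\"older inequality $w\in RH_s$ for $s<\Gamma_w$ to trade an additional factor $|h|^{d/(sp)}$ against the weight, gaining $d/(p\Gamma_w)$ in the required decay of $\phi$ and yielding the sharper exponent $\tfrac{d}{p}(R_w+\tfrac{1}{\Gamma_w}-1)$. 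The main obstacle I anticipate is the simultaneous quantitative bookkeeping of both self-improvement properties through the two-parameter decomposition; this is where the $r$-th order shifted maximal function $\cM_r g(x):=\sup_{Q\ni x}\bigl(\aint_Q |g|^r\bigr)^{1/r}$ and the reverse H\"older inequality enter as the natural intermediaries, and where the full strength of $\phi\in\cI_o$ rather than merely $\cI$ is needed to absorb the residual errors.
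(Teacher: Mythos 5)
Your plan contains a genuine gap that traces back to a single false premise: the estimate $\|\cD_h^L f\|_{L_p(\bR^d,w\,\mathrm{d}x)}\le 2^L\|f\|_{L_p(\bR^d,w\,\mathrm{d}x)}$ is \emph{false} for general $w\in A_p$, because translation is not an isometry (nor even a bounded operator with uniform constant) on $L_p(\bR^d,w\,\mathrm{d}x)$. Indeed, $\|f(\cdot+h)\|_{L_p(w)}^p=\int|f(y)|^p w(y-h)\,\mathrm{d}y$, and for $w(x)=|x|^\alpha$ with $f_0\in C_c^\infty$ one has $\|f_0(\cdot+h)\|_{L_p(w)}\sim|h|^{\alpha/p}$ as $|h|\to\infty$; this is exactly the computation the paper uses in Remark~\ref{rem:mainthm}$(ii)$ to show that $\mathscr{B}^\phi_{p,q}\subsetneq\cB^\phi_{p,q}$ when $\phi$ does not decay fast enough. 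So your ``crude tail bound'' is not crude at all: it is precisely where the weight-dependent threshold $\tfrac{d}{p}(R_w+\tfrac{1}{\Gamma_w}-1)$ is determined, and it cannot be dispatched without bookkeeping the translation loss. The same error reappears in your proposed Bernstein inequality $\|\cD_h^L\Delta_jf\|_{L_p(w)}\lesssim\min(1,(2^j|h|)^L)\|\Delta_jf\|_{L_p(w)}$: the correct weighted version (the paper's Lemma~\ref{lem_wlog_besov}) carries an unavoidable extra factor $(1+2^j|h|)^{\frac{d}{p}\left(\frac{p}{r}+\frac{1}{s}-1\right)}$, obtained from the weighted bound of the \emph{shifted} maximal operator $\cM^z f(x)=\sup_{Q\ni x}\aint_{Q+z\ell(Q)}|f|$ (Proposition~\ref{230610919}), not from a local maximal average. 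Without that factor your discrete Young estimate would close for all $\phi\in\cI_o(0,M)$, contradicting Remark~\ref{rem:mainthm}$(ii)$.

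Two smaller points. First, the object you call the ``$r$-th order shifted maximal function'' $\cM_r g(x)=\sup_{Q\ni x}(\aint_Q|g|^r)^{1/r}$ is the power (Fefferman--Stein type) maximal function, not the shifted maximal function; the shift in $\cM^z$ is in the cube's \emph{position}, not in the exponent, and this is what encodes translation growth against a weight. Second, your intuition about where the reverse H\"older inequality and the Peetre maximal function enter is sound but placed on the wrong side of the equivalence: in the paper these tools appear in the direction from the Littlewood--Paley norm to the difference norm (Lemma~\ref{23.02.09.14.46}, proving $\cX_1\lesssim X_5$), because one needs to transport $\Delta_jf$ by arbitrarily large $h$; the converse direction (Lemma~\ref{23.02.08.15.58}) needs only the unshifted Hardy--Littlewood maximal function and the vanishing moments of $\psi^L_{1/t}$. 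Once you replace the false translation bound with Lemma~\ref{lem_wlog_besov} (Besov) and Lemma~\ref{22.12.28.17.28}$(iv)$ (Triebel--Lizorkin), your summation scheme can be salvaged, but the constants you keep are now genuinely tied to $r<R_{w,p}$ and $s<\Gamma_w$, which is how the stated thresholds arise.
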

The proof of Theorem~\ref{thm_lp_diff_equiv} follows from several inequalities that depend on all the parameters in Definition~\ref{def_diff}. 
The dependence on these parameters will appear only implicitly, as they are fixed throughout the argument and have no effect on the statement of Theorem~\ref{thm_lp_diff_equiv}.

Note that the constants $R_w$, $\Gamma_w$ have frequently appeared in the literature \cite{Bui1982,Bui1984,BuiDuong_2020,BPT1996}.
We also obtain similar results for homogeneous norms under the same conditions of Theorem~\ref{thm_lp_diff_equiv}.
\begin{thm} 
\label{homo}
If all of conditions in Theorem \ref{thm_lp_diff_equiv} hold, then for $f\in X_{p,q}^{\phi}(\bR^d,w\,\mathrm{d}x)$,
		\begin{align*}
			\|f\|_{\mathring{X}_{p,q}^\phi(\bR^d,w\dd x)}\simeq \|f\|_{\mathring{\mathscr{X}}_{p,q}^{\phi}(\bR^d,w\,\mathrm{d}x)}.
   \end{align*}
Here, $(X,\mathscr{X})=(B,\mathscr{B})$, $(F,\mathscr{F})$. 
\end{thm}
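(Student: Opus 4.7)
The plan is to adapt the argument for Theorem \ref{thm_lp_diff_equiv} to the homogeneous setting by replacing the anchor terms $\|S_0 f\|_{L_p(w)}$ and $\|f\|_{L_p(w)}$ (which encode the low-frequency content of $f$ on the two sides of the inhomogeneous equivalence) with their natural homogeneous counterparts: the portion of the Littlewood--Paley sum over $j\le 0$ on the LP side, and the portion of the difference integral over $|h|>1$ on the difference side. The same shifted and Peetre maximal function estimates, together with the reverse H\"older bounds for $A_p$ weights that underlie the proof of Theorem \ref{thm_lp_diff_equiv}, should then close the homogeneous equivalence once the range of indices is extended from $j\ge 1$ to all $j\in\bZ$ and from $|h|\le 1$ to all $h\in\bR^d$.

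For the direction $\|f\|_{\mathring{\mathscr{X}}}\lesssim \|f\|_{\mathring{X}}$ I would write $f=\sum_{j\in\bZ}\Delta_j f$ (convergent in $\cS'$), expand $\cD_h^L f=\sum_{j\in\bZ}\cD_h^L\Delta_j f$, and bound each piece by the minimum of a Bernstein-type estimate and a trivial weighted translation bound:
$$\|\cD_h^L\Delta_j f\|_{L_p(w)}\;\lesssim\;\min\bigl(1,\,(2^j|h|)^L\bigr)\,(1+|h|)^{\gamma}\,\|\Delta_j f\|_{L_p(w)},$$
where the factor $(2^j|h|)^L$ comes from $\|\nabla^L\Delta_j f\|_{L_p(w)}\lesssim 2^{jL}\|\Delta_j f\|_{L_p(w)}$ for band-limited $\Delta_j f$, the trivial bound uses the weighted triangle inequality, and the polynomial factor $(1+|h|)^\gamma$ reflects the $A_p$-doubling of $w$ under translation. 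Integrating against $\phi(|h|^{-1})^q|h|^{-d}\,dh$ and applying a Schur-type inequality in $(j,|h|)$ yields
$$\int_{\bR^d}\phi(|h|^{-1})^q\|\cD_h^L f\|_{L_p(w)}^q\,\frac{dh}{|h|^d}\;\lesssim\;\sum_{j\in\bZ}\phi(2^j)^q\|\Delta_j f\|_{L_p(w)}^q,$$
the Schur kernel being integrable precisely because $\phi\in\cI_o(a,M)$ with $a$ as in Theorem \ref{thm_lp_diff_equiv} and $M\le L$. The reverse direction uses a Calder\'on-type reproducing formula $\Delta_j f = c\int \cD_h^L f\,\widetilde{\psi}_j(h)\,dh$ with a Schwartz kernel localised near $|h|\sim 2^{-j}$, and runs the same Schur argument in reverse. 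The Triebel--Lizorkin case is obtained by carrying out all of the above pointwise and then closing with the Fefferman--Stein vector-valued maximal inequality on $L_p(w)$, valid since $w\in A_p$ and $p,q\in(1,\infty)$.

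The main obstacle is the weighted large-scale translation bound: because $w\,dx$ is not translation-invariant, the exponent $\gamma$ is genuinely positive and grows with the doubling constant of $w$, hence with $R_w$ (and on the Besov side with $1/\Gamma_w$ via the reverse H\"older inequality). Keeping the Schur kernel integrable forces $\phi(|h|^{-1})$ to decay fast enough as $|h|\to\infty$; equivalently, the lower exponent $a$ in $\cI_o(a,M)$ must exceed a sharp threshold determined by the doubling exponent of $w$, and symmetrically the upper exponent $M$ must stay below the Bernstein gain $L$ so that the small-$|h|$ part of the Schur kernel converges. This calibration is exactly what forces the sharp thresholds $a=\tfrac{d}{p}(R_w+\tfrac{1}{\Gamma_w}-1)$ (Besov) and $a=\tfrac{dR_w}{p}$ (Triebel--Lizorkin) already appearing in Theorem \ref{thm_lp_diff_equiv}, so that the same hypothesis automatically delivers the homogeneous equivalence of Theorem \ref{homo}.
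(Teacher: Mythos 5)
There is a genuine error in the forward direction. You claim
$\|\cD_h^L\Delta_j f\|_{L_p(w)}\lesssim\min\bigl(1,(2^j|h|)^L\bigr)(1+|h|)^{\gamma}\|\Delta_j f\|_{L_p(w)}$
and attribute the factor $(1+|h|)^{\gamma}$ to ``$A_p$-doubling of $w$ under translation.'' But a translation bound of the form $\|g(\cdot+h)\|_{L_p(w)}\lesssim(1+|h|)^{\gamma}\|g\|_{L_p(w)}$ is simply false for general $g\in L_p(w)$: with $d=1$, $w(x)=|x|^{\alpha}$ for some $\alpha\in(-1,0)$, and $g_{\epsilon}=\bI_{[-h-\epsilon,-h+\epsilon]}$, one computes
$\|g_{\epsilon}(\cdot+h)\|_{L_p(w)}/\|g_{\epsilon}\|_{L_p(w)}\sim(h/\epsilon)^{|\alpha|/p}\to\infty$ as $\epsilon\to0$.
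A translation bound does hold for functions with Fourier support in $|\xi|\sim 2^j$, but then the exponent is governed by the \emph{dimensionless} shift $2^j|h|$, not $|h|$. The correct statement is Lemma~\ref{lem_wlog_besov},
$\|\cD_{\rho}^L\Delta_jf\|_{L_p(w)}\lesssim N(w)\min\bigl(1,(2^j|\rho|)^L\bigr)\bigl(1+2^j|\rho|\bigr)^{\frac{d}{p}(\frac{p}{r}+\frac{1}{s}-1)}\|\Delta_jf\|_{L_p(w)}$,
which is proved by expanding $\Delta_jf$ against the kernel $\psi_{2^j}$ and invoking the weighted $L_p$-boundedness of the shifted maximal operator $\cM^{2^j\rho}$ (Proposition~\ref{230610919}). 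The distinction is not cosmetic: with your $j$-independent factor, the Schur sum over $j>k$ at scale $|h|\sim 2^{-k}$ is of order $2^{-k\gamma}$, which is unbounded as $k\to-\infty$; the large-$|h|$ part of the homogeneous difference integral --- precisely the piece that distinguishes $\mathring{\mathscr{X}}$ from the bounded-$|h|$ norm $\mathring{\cX}$ --- is then lost. With the correct factor $(1+2^{j-k})^{\gamma}$ the calibration you describe against the lower index $a$ of $\cI_o(a,M)$ is exactly right and the Schur argument closes.

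Beyond this slip, the structure of your argument is the intended one, but the paper's actual proof is far more economical: the inhomogeneous terms $\|f\|_{L_p(w)}$ and $\|S_0f\|_{L_p(w)}$ occur in Sections~\ref{sec_mainproof} and~\ref{sec_prop_equiv} only as separable anchors, and all of the essential estimates there already run over the full ranges $j\in\bZ$, $t\in(0,\infty)$, $h\in\bR^d$. The paper therefore reads the equivalence $\|f\|_{\mathring{B}_{p,q}^{\phi}}\simeq\bigl(\int_0^{\infty}\phi(t^{-1})^q\|\psi_{1/t}^L\ast f\|_{L_p(w)}^q\,dt/t\bigr)^{1/q}$ directly off the proofs of Lemmas~\ref{23.02.08.15.54}--\ref{23.02.08.15.55}, passes to $\|f\|_{\mathring{\mathscr{B}}_{p,q}^{\phi}}$ via the exact identity $\psi_{1/t}^L\ast f=\int\psi_{1/t}(y)\cD_y^Lf\,dy$ already exploited in the proof of \eqref{230611958}, and cites \eqref{2307101154} for the converse. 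Your re-derivation through Bernstein estimates and a Calder\'on-type reproducing formula is morally the same calculation, but you are doing the work twice where the paper merely strips two anchor terms from inequalities it has already established.
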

The proofs of Theorems \ref{thm_lp_diff_equiv} and \ref{homo} are provided in Sections \ref{sec_mainproof} and \ref{sec_prop_equiv}, respectively.
The cases $q=\infty$ for $F$, $\mathcal{F}$ and $\mathscr{F}$ are excluded in Theorem \ref{thm_lp_diff_equiv} because they do not hold even for unweighted cases.
The proof of Theorem \ref{thm_lp_diff_equiv} is based on Proposition \ref{prop_wBesov_equiv},
which consists of various quantities equivalent to $\|f\|_{B_{p,q}^\phi(\bR^d, w\,\mathrm{d}x)}$ and $\|f\|_{F_{p,q}^\phi(\bR^d, w\,\mathrm{d}x)}$.
Assuming Proposition~\ref{prop_wBesov_equiv}, we will show in Lemma~\ref{23.02.08.15.58} that 
$$
\|f\|_{X_{p,q}^\phi(\bR^d, w\,\mathrm{d}x)} 
\lesssim \|f\|_{\cX_{p,q}^\phi(\bR^d, w\,\mathrm{d}x)},
$$ 
where $(X, \cX, \mathscr{X}) = (B, \cB, \mathscr{B})$ or $(F, \cF, \mathscr{F})$.
We also show in Lemma~\ref{23.02.09.14.46} that
$$
\|f\|_{\mathscr{X}_{p,q}^\phi(\bR^d, w\,\mathrm{d}x)}
\lesssim \|f\|_{X_{p,q}^\phi(\bR^d,w\,\mathrm{d}x)}.
$$ 
In proving Lemma~\ref{23.02.09.14.46}, we make use of shifted maximal functions and Peetre's maximal functions for $(X,\mathscr{X})=(B,\mathscr{B})$ and $(X,\mathscr{X})=(F,\mathscr{F})$, respectively.
Detailed properties of these maximal functions are presented in Section~\ref{sec_preliminaries}.

Now we give some remarks on our assumption $\phi$ in Theorem \ref{thm_lp_diff_equiv}.
\begin{rem}\label{rem:mainthm}
$(i)$ If $w\equiv1$, then
\begin{align}\label{230612535}
R_{w}=1,\quad \Gamma_w=\infty.
\end{align}
Thus,
$$
\frac{d}{p}\left(R_w+\frac{1}{\Gamma_w}-1\right)=0\quad \text{and}\quad \frac{dR_w}{p}=\frac{d}{p}\,.
$$
Hence Theorem~\ref{thm_lp_diff_equiv}-$(i)$ contains non-weighted characterizations introduced in \cite{triebel1983theory}.
Similarly, it can be easily checked that Theorem~\ref{thm_lp_diff_equiv}-$(ii)$ also contains non-weighted characterizations introduced in \cite{triebel1983theory}.

It is worth noting that any weights satisfying \eqref{230612535} suggest the largest class of $\phi$ for which Theorem~\ref{thm_lp_diff_equiv} holds.
One of sufficient conditions for \eqref{230612535} is  $w\in A_1\cap RH_\infty$, which means that
\begin{equation}
\label{25.11.04.13.10}
	w(x) \simeq \cM w(x)\simeq \inf_{ Q\ni x}\aint_{\cQ}w(y)\,\mathrm{d}y, \quad \forall x\in\bR^d.
\end{equation}
Moreover, it is proved in \cite[Theorem 5.1]{Kin_Sh2014} that \eqref{25.11.04.13.10} holds if and only if $\log w\in \overline{L^{\infty}(\bR^d)}\subseteq \mathrm{BMO}(\bR^d)$.

$(ii)$ If $\phi \not\in \cI_o \left(\frac{d}{p}\left(R_w+\frac{1}{\Gamma_w}-1\right),M \right)$, then there is a counter example for Theorem~\ref{thm_lp_diff_equiv}-$(i)$.
That is, there is a certain weight $w$ such that if $\phi\in \cI_o \left( 0,\frac{d}{p}\left(R_w+\frac{1}{\Gamma_w}-1\right) \right)$, then 
\begin{align*}
    \mathscr{B}_{p,q}^{\phi}(\bR^d,w\,\mathrm{d}x)\subsetneq \cB_{p,q}^{\phi}(\bR^d,w\,\mathrm{d}x)\subsetneq B_{p,q}^{\phi}(\bR^d,w\,\mathrm{d}x).
\end{align*}
Indeed, let $d=1$, $p,q\in(1,\infty)$, $\alpha\in(0,p-1)$, and $w(x)=|x|^{\alpha}$. Then, it follows that $w$ is in $A_p(\bR)$, $R_{w}=1+\alpha$, and $1/\Gamma_w=0$.
Consequently,
$$
\frac{d}{p}\left(R_{w}+\frac{1}{\Gamma_w}-1\right)=\frac{\alpha}{p}.
$$
Assume that $\phi(\lambda)=\lambda^{\theta}$, where $\theta\leq \alpha/p$, and $f_0 \in C_c^{\infty}(\bR)$, which satisfies two conditions:
\begin{itemize}
\item $|f_0(x)|\geq1$ for $x\in(-1,1)$,
\item $f_0(x)=0$ for $x\geq2$ or $x\leq-2$.
\end{itemize}
We then have that $f_0$ is in the space $\cB_{p,q}^{\phi}(\bR,w\,\mathrm{d}x)$ and $\phi$ belongs to $\cI_o(0,\alpha/p)$.
Using Proposition \ref{prop_wBesov_equiv} and Lemma \ref{23.02.08.15.58}, we can deduce that
$$
\cB_{p,q}^{\phi}(\bR^d,w\,\mathrm{d}x)\subseteq B_{p,q}^{\phi}(\bR^d,w\,\mathrm{d}x),
$$
thus, $f_0\in B_{p,q}^{\phi}(\bR^d,w\,\mathrm{d}x)$.

First, we verify that
\begin{equation}
	\label{eq230928_01}
\mathscr{B}_{p,q}^{\phi}(\bR,w\,\mathrm{d}x)\subsetneq \cB_{p,q}^{\phi}(\bR,w\,\mathrm{d}x).
\end{equation}
Since for $h>4$,
$$
\|f_0(\cdot+h)-f_0\|_{L_p(\bR,w\,\mathrm{d}x)}^p\geq\int_{-\infty}^{\infty}|f_0(x+h)|^p|x|^{\alpha}\mathrm{d}x\geq \int_{-1}^{1}|x-h|^{\alpha}\mathrm{d}x\geq Nh^{\alpha}
$$
we have
$$
\int_{\bR}\frac{\|f_0(\cdot+h)-f_0\|_{L_p(\bR,w\,\mathrm{d}x)}^q}{|h|^{1+q\theta}}\mathrm{d}h\geq \int_{4}^{\infty}h^{q(\frac{\alpha}{p}-\theta)-1}\mathrm{d}h=+\infty,
$$
and this implies \eqref{eq230928_01}.

To demonstrate that
$$
\cB_{p,q}^{\phi}(\bR^d,w\,\mathrm{d}x)\subsetneq B_{p,q}^{\phi}(\bR^d,w\,\mathrm{d}x),
$$
we need to prove that the inclusion mapping $I:\cB_{p,q}^{\phi}(\bR^d,w\mathrm{d}x)\to B_{p,q}^{\phi}(\bR^d,w\mathrm{d}x)$ is not surjective.
To establish this, we assume, for the sake of contradiction, that $I$ is surjective.
Then, by Proposition \ref{prop_wBesov_equiv} and Lemma \ref{23.02.08.15.58}, $I$ is bounded linear and bijective.
Hence \cite[Theorem 5.10]{rudin2006real}, we have
\begin{align}
\label{23.06.14.16.17}
    \|g\|_{\cB_{p,q}^{\phi}(\bR^d,w\,\mathrm{d}x)}\lesssim \|I(g)\|_{B_{p,q}^{\phi}(\bR^d,w\,\mathrm{d}x)}=\|g\|_{B_{p,q}^{\phi}(\bR^d,w\,\mathrm{d}x)}, \,\, \forall g\in \cB_{p,q}^{\phi}(\bR^d,w\,\mathrm{d}x).
\end{align}
However, under the assumption of Theorem \ref{thm_lp_diff_equiv}-$(i)$, \eqref{23.06.14.16.17} fails.
To verify this, put $f_n(x):=2^{n/p}f_0(2^nx)$.
Then for $n\geq3$,
\begin{align}
\label{23.06.14.18.51}
    \int_{|h|\leq1}\frac{\|f_n(\cdot+h)-f_n\|_{L_p(\bR,w\,\mathrm{d}x)}^q}{|h|^{1+q\theta}}\mathrm{d}h= 2^{qn\left(\theta-\alpha/p\right)}\int_{|h|\leq2^n}\frac{\|f_0(\cdot+h)-f_0\|_{L_p(\bR,w\,\mathrm{d}x)}^q}{|h|^{1+q\theta}}\mathrm{d}h.
\end{align}
Moreover,
\begin{equation}
\label{23.06.14.18.52}
    \begin{gathered}
        \|f_n\|_{L_p(\bR,w\,\mathrm{d}x)}=2^{-n\alpha/p}\|f_0\|_{L_p(\bR,w\,\mathrm{d}x)},\\
        \|f_n\|_{\mathring{B}_{p,q}^{\phi}(\bR,w\,\mathrm{d}x)}:=\left(\sum_{j\in\bZ}\phi(2^j)^q\|\Delta_jf_n\|_{L_p(\bR,w\,\mathrm{d}x)}^q\right)^{1/q} =2^{n\left(\theta-\alpha/p\right)}\|f_0\|_{\mathring{B}_{p,q}^{\phi}(\bR,w\,\mathrm{d}x)}.
    \end{gathered}
\end{equation}
Therefore, by \eqref{23.06.14.16.17}, \eqref{23.06.14.18.51} and \eqref{23.06.14.18.52}
\begin{align*}
    &2^{-n\alpha/p}\|f_0\|_{L_p(\bR,w\,\mathrm{d}x)}+2^{n\left(\theta-\alpha/p\right)}\left(\int_{4}^{2^n}|h|^{q(\alpha/p-\theta)-1}\mathrm{d}h\right)^{1/q}\\
    &\leq 2^{-\frac{n\alpha}{p}}\|f_0\|_{L_p(\bR,w\,\mathrm{d}x)}+2^{n\left(\theta-\alpha/p\right)}\left(\int_{|h|\leq2^n}\frac{\|f_0(\cdot+h)-f_0\|_{L_p(\bR,w\,\mathrm{d}x)}^q}{|h|^{1+q\theta}}\mathrm{d}h\right)^{1/q}\\
    &=\|f_n\|_{\cB_{p,q}^{\phi}(\bR^d,w\,\mathrm{d}x)}\\
    &\lesssim \|f_n\|_{B_{p,q}^{\phi}(\bR^d,w\,\mathrm{d}x)}\simeq 2^{-n\alpha/p}\|f_0\|_{L_p(\bR,w\,\mathrm{d}x)}+2^{n\left(\theta-\alpha/p\right)}\|f_0\|_{\mathring{B}_{p,q}^{\phi}(\bR^d,w\,\mathrm{d}x)}.
\end{align*}
The last equivalence follows from Proposition \ref{prop_wBesov_equiv}.
This certainly implies
$$
+\infty=\lim_{n\to\infty}\left(\int_{4}^{2^n}|h|^{q(\alpha/p-\theta)-1}\mathrm{d}h\right)^{1/q}\lesssim\|f_0\|_{\mathring{B}_{p,q}^{\phi}(\bR^d,w\,\mathrm{d}x)},
$$
which contradicts the fact that $f_0$ is in $\mathring{B}_{p,q}^{\phi}(\bR^d,w\,\mathrm{d}x)$.
\end{rem}

For mathematical completeness in function space theory, one might be curious about some analogous version of the equivalence results in Theorem~\ref{thm_lp_diff_equiv}.
For example,
\begin{itemize}
    \item characterizations of $X^{\phi}_{p,q}(\bR^d,w\,\mathrm{d}x)$ for $p,q \in (0,1]$.
    \item characterizations of homogeneous function spaces $\mathring{X}^{\phi}_{p,q}(\bR^d,w\,\mathrm{d}x)$.
\end{itemize}
We first comment on the result for $p,q \in (0,1]$.
To accomplish such a goal, we need a \emph{Hardy space theory}, which guarantees continuity of norms in the sense of $p,q \in (0,\infty)$ and negative regularity.
H.-Q. Bui, T. A. Bui, and X. T. Duong \cite{BuiDuong_2020} make use of results in Hardy space to characterize Besov and Triebel-Lizorkin spaces in terms of continuous norms and square function type norms.
Those full characterizations, however, are not in our main concern, which contains trace and Sobolev embedding theorems.
Note that function spaces arising in studies of trace and Sobolev embedding results have indices between $1$ and $\infty$, so we don't discuss characterizations for $p,q\in(0,1]$ in this paper.
For readers interested in full characterizations for $p,q \in(0,\infty)$, we recommend \cite{BuiDuong_2020} and references therein.

For the characterizations of homogeneous spaces, one may desire to replace $X_{p,q}^\phi(\bR^d,w\,\mathrm{d}x)$ into $\mathring{X}^{\phi}_{p,q}(\bR^d,w\,\mathrm{d}x)$ in Theorem~\ref{homo}.
We end this section with comments on the spaces $\mathring{X}^{\phi}_{p,q}(\bR^d,w\,\mathrm{d}x)$.
For $\mathring{X}^{\phi}_{p,q}(\bR^d,w\,\mathrm{d}x)$ to be a Banach space with the norm $\|\cdot\|_{\mathring{X}^{\phi}_{p,q}(\bR^d,w\,\mathrm{d}x)}$, this space should be defined in $\cS'(\bR^d)/\cP(\bR^d)$ according to classical results regarding homogeneous function spaces (see \textit{e.g.} \cite{sawano2019}).
Here, $\cS'(\bR^d)/\cP(\bR^d)$ is the quotient space of $\cS'(\bR^d)$ by the polynomial ring, $\cP(\bR^d)$.
As all Fourier transforms of polynomials are supported on $\{0\}$, $\|\cdot\|_{\mathring{X}^{\phi}_{p,q}(\bR^d,w\,\mathrm{d}x)}$ is well-defined in $\cS'(\bR^d)/\cP(\bR^d)$.
We propose that $\|\cdot \|_{\mathring{\mathscr{X}}_{p,q}^\phi(\bR^d)}$ is well-defined under the modulo of polynomials due to the following proposition:
\begin{prop}
\label{poly}
	Let $p,\,q\in (1,\infty)$, {\color{blue}$L>0$} and $\phi\in\cI_o(0,L)$.
 For any polynomial $P$,
	\begin{align}\label{230714638}	\|P\|_{\mathring{\mathscr{X}}_{p,q}^\phi(\bR^d)}=0\quad\text{if}\,\,\,\,\deg P<L\quad\text{and}\quad \|P\|_{\mathring{\mathscr{X}}_{p,q}^\phi(\bR^d)}=\infty\quad\text{if}\,\,\,\,\deg P\geq L.
	\end{align}
\end{prop}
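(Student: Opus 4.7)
The plan is to split on $\deg P$ versus $L$, using in both cases the explicit expansion
$$
\cD_h^L P(x) = \sum_{m \geq L} L!\,S(m,L) \sum_{|\gamma|=m} \frac{h^\gamma}{\gamma!} \partial^\gamma P(x),
$$
which follows from $\cD_h^L P(x)=\sum_{j=0}^{L}(-1)^{L-j}\binom{L}{j}P(x+jh)$ together with the Stirling identity $\sum_{j=0}^{L}(-1)^{L-j}\binom{L}{j}j^m = L!\,S(m,L)$ (zero for $m<L$, positive for $m\ge L$). If $\deg P < L$, every $\partial^\gamma P$ with $|\gamma|\ge L$ vanishes, so $\cD_h^L P \equiv 0$ and both $\|P\|_{\mathring{\mathscr{B}}_{p,q}^\phi(\bR^d)}$ and $\|P\|_{\mathring{\mathscr{F}}_{p,q}^\phi(\bR^d)}$ are $0$.

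Now suppose $\deg P = k \ge L$. The $m=k$ contribution to the constant-in-$x$ part of $\cD_h^L P$ is exactly $L!\,S(k,L)\,P_k(h)$, where $P_k$ denotes the top-degree homogeneous part of $P$; this contribution depends only on $h$ and, as a polynomial in $h$, is not identically zero. For the $\mathring{\mathscr{B}}_{p,q}^\phi$-norm, for a.e.\ $h$ one has $P_k(h)\neq 0$, so $\cD_h^L P(x)$ is a nonzero polynomial in $x$ (it has a nonzero constant term). Since nonzero polynomials are not in $L_p(\bR^d)$ for $p<\infty$, $\|\cD_h^L P\|_{L_p(\bR^d)}=+\infty$ on a set of positive $h$-measure, hence $\|P\|_{\mathring{\mathscr{B}}_{p,q}^\phi(\bR^d)}=+\infty$.

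The $\mathring{\mathscr{F}}_{p,q}^\phi$-case is the main obstacle, since one must force divergence of the inner $h$-integral \emph{pointwise} in $x$. Fix $x$; choose $\omega_0\in\bS^{d-1}$ with $P_k(\omega_0)\neq 0$ and a conic neighborhood $V\subset\bS^{d-1}$ on which $|P_k|$ is bounded below by a positive constant. The remaining terms in the expansion are $O_x(|h|^{k-1})$, so for $r=|h|\ge R(x)$ and $h/|h|\in V$ one has $|\cD_h^L P(x)|\gtrsim r^k$. Passing to polar coordinates and substituting $u=1/r$ reduces the problem to
$$
\int_0^\epsilon \phi(u)^q u^{-kq-1}\,\mathrm{d}u = +\infty
$$
for some $\epsilon>0$. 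The crucial input, extracted from $\phi\in\cI_o(0,L)$, is $\phi(u)/u^L\to +\infty$ as $u\to 0^+$: plugging $t=1/\lambda$ into the supremum defining $s_\phi$ gives $s_\phi(\lambda)\ge\phi(1)/\phi(1/\lambda)$, so $\phi(1/\lambda)\,\lambda^L\ge \phi(1)\,\lambda^L/s_\phi(\lambda)\to\infty$ because $s_\phi(\lambda)=o(\lambda^L)$ as $\lambda\to\infty$. Since $k\ge L$, this gives $\phi(u)^q u^{-kq-1}\gtrsim u^{(L-k)q-1}$ near $0$, a non-integrable power; the inner integral is therefore $+\infty$ for every $x$, and the outer $L_p$-norm is $+\infty$. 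The bulk of the difficulty is precisely this pointwise-in-$x$ divergence, where isolating the $x$-independent leading-in-$h$ term $L!\,S(k,L)P_k(h)$ and extracting the rate $\phi(u)/u^L\to\infty$ from the $o(\lambda^L)$ hypothesis are both essential.
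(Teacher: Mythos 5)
Your proof is correct and uses the same key identity (the Stirling/Newton expansion of $\cD_h^L$, with the degree decomposition of $\cD_h^LP$ into homogeneous-in-$h$ pieces) and the same essential lower bound on a cone for large $|h|$; for the $\mathring{\mathscr{F}}$-case, your argument is the explicit version of what the paper compresses into ``by similarity,'' carried out pointwise in $x$ (with $R(x)$ depending on $x$) rather than uniformly on a compact set $E_1$, and both hinge on extracting $r^L\phi(1/r)\to\infty$ from the $o(\lambda^L)$ hypothesis (the paper uses the quantitative form $r^L\phi(1/r)\gtrsim r^\varepsilon$ from \eqref{w scaling}, while your qualitative form $\phi(u)u^{-L}\to\infty$ is enough since $u^{(L-k)q-1}$ is already non-integrable at $0$). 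Your $\mathring{\mathscr{B}}$-argument, however, is a genuinely different and shorter route: observing that for a.e.\ $h$ the constant term of $\cD_h^LP$ — a degree-$k$ polynomial in $h$ with nonzero top part $L!\,S(k,L)P_k(h)$ — does not vanish, so $\cD_h^LP$ is a nonzero polynomial in $x$, hence $\|\cD_h^LP\|_{L_p(\bR^d)}=\infty$; this dispenses with any localization and does not even invoke the $\cI_o(0,L)$ hypothesis, whereas the paper runs the same uniform-on-$E_1\times E_2$ estimate for $\mathring{\mathscr{B}}$ as for $\mathring{\mathscr{F}}$. (A minor polish: the step ``$P_k(h)\neq 0$ implies $\cD_h^LP$ has nonzero constant term'' is not an immediate implication, since lower-order contributions to the constant term could cancel $L!\,S(k,L)P_k(h)$ for a measure-zero set of $h$; the correct statement is that $\cD_h^LP(0)$ is a degree-$k$ polynomial in $h$ with top-degree part $L!\,S(k,L)P_k(h)\not\equiv 0$, hence nonvanishing for a.e.\ $h$ — the conclusion you draw is the same.)
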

Characterizations of $\mathring{X}_{p,q}^{\phi}(\bR^d,w\,\mathrm{d}x)\subset \cS'(\bR^d)/\cP(\bR^d)$ may be possible together with Proposition~\ref{poly}, but we do not pursue further this as it falls beyond the scope of this paper and would require a separate discussion on the realization of $\mathring{X}^{\phi}_{p,q}(\bR^d,w\,\mathrm{d}x)$ to $L_{1,\mathrm{loc}}(\bR^d)$.
The proof of Proposition \ref{poly} will be provided in the last part of Section \ref{sec_prop_equiv}.

\mysection{Preliminaries}\label{sec_preliminaries}

In this section, we introduce properties of $A_p$ weights, functions of type $\cI_o(a,b)$, shifted maximal functions, and Peetre's maximal functions.
Some weighted vector-valued inequalities are also given.
Finally, we suggest various quantities equivalent with $B_{p,q}^\phi(\bR^d, w~\mathrm{d}x)$ and $F_{p,q}^\phi(\bR^d, w~\mathrm{d}x)$.

\subsection{Self-improving and the reverse H\"older properties of the Muckenthoupt $A_p$ class}
\label{23.05.27.13.01}

It is well known that for $w\in A_p$ we have
	\begin{align}\label{2306111001}
	\|\cM\|_{L_p(w)\rightarrow L_p(w)}\lesssim_{d,p}[w]_{A_p}^{1/(p-1)}.
\end{align}
In addition, we introduce useful properties of $w\in A_p(\bR^d)$ used in this paper.

\begin{prop}
\label{23.04.21.16.12}
Let $p,s\in(1,\infty)$, $\cQ$ be a cube and $S\subset \cQ$. 
\begin{enumerate}[(i)]
    \item If $w\in A_p(\bR^d)$, then $w\in A_q(\bR^d)$ for some $q<p$.
    \item If $w\in A_p(\bR^d)$, then 
    $$
    \left(\frac{|S|}{|\mathcal{Q}|}\right)^p \leq [w]_{A_p}\frac{w(S)}{w(\mathcal{Q})}\,.
    $$
    \item If $w\in A_p(\bR^d)$, then there exists a constant $s=s(d,p,[w]_{A_p})>1$ such that $w\in RH_{s}(\bR^d)$
    \item If $w\in RH_s(\bR^d)$, then $w\in RH_{t}(\bR^d)$ for some $t>s$.
    \item If $w\in RH_s(\bR^d)$, then
    $$
    \frac{w(S)}{w(\mathcal{Q})}\leq [w]_{RH_s}\left(\frac{|S|}{|\mathcal{Q}|}\right)^{1-1/s} \,.
    $$
\end{enumerate}
\end{prop}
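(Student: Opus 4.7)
The five items are classical, so the plan is to write a single coherent proof that organizes them by difficulty: (ii) and (v) are one-line applications of H\"older's inequality, (iii) is the Coifman--Fefferman reverse H\"older inequality, and (i), (iv) follow from the reverse H\"older inequality together with elementary manipulations of the defining constants. I would present them in the order (ii), (v), (iii), (iv), (i), since (iii)--(iv) are what make (i) work.

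For (ii), the plan is to split $\chi_S = \chi_S w^{1/p} w^{-1/p}$ on $\cQ$ and apply H\"older with exponents $p$ and $p/(p-1)$: this gives
\begin{align*}
|S|^p \leq w(S)\left(\int_\cQ w^{-1/(p-1)}\right)^{p-1},
\end{align*}
and dividing by $|\cQ|^p$ and recognizing $\frac{w(\cQ)}{|\cQ|}\cdot\frac{1}{|\cQ|^{p-1}}(\int_\cQ w^{-1/(p-1)})^{p-1}\leq [w]_{A_p}\frac{w(\cQ)}{|\cQ|}$ closes the estimate. For (v), I would instead write $w(S) = \int_S 1\cdot w\leq |S|^{1-1/s}(\int_\cQ w^s)^{1/s}$ by H\"older with exponents $s'$ and $s$, then multiply and divide by $|\cQ|^{1/s}$ and invoke the definition of $[w]_{RH_s}$.

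For (iii), the strategy is the standard Coifman--Fefferman argument via the Calder\'on--Zygmund decomposition: fix a cube $\cQ_0$, form dyadic CZ stopping cubes at levels $\lambda_k = a^k \frac{w(\cQ_0)}{|\cQ_0|}$ for $a$ large, use item (ii) to convert Lebesgue-measure control on stopping cubes into geometric decay for $w$-measure, and then unwind the layer-cake formula $\int_{\cQ_0} w^{1+\delta}\,dx = (1+\delta)\int_0^\infty \lambda^\delta w(\{w>\lambda\}\cap\cQ_0)\,d\lambda$ to obtain
\begin{align*}
\left(\fint_{\cQ_0} w^{1+\delta}\right)^{1/(1+\delta)} \lesssim \fint_{\cQ_0} w,
\end{align*}
for some $\delta = \delta(d,p,[w]_{A_p})>0$; setting $s=1+\delta$ gives the claim. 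The main obstacle is choosing $\delta$ small enough that the geometric series in the layer-cake computation converges, and this is exactly where the dependence $s=s(d,p,[w]_{A_p})$ enters.

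Item (iv) is the Gehring-type self-improvement of reverse H\"older: it is proved by the same CZ/layer-cake scheme applied now to $w^s$ in place of $w$, giving $w^s \in RH_{1+\eta}$ for some $\eta>0$, whence $w\in RH_{s(1+\eta)}$. Finally for (i), I would use (iii) to pick $s>1$ with $w\in RH_s$, then observe that $w^{-1/(p-1)}\in A_{p'}$ by the duality $[\sigma]_{A_{p'}}=[w]_{A_p}^{1/(p-1)}$ for $\sigma=w^{-1/(p-1)}$; applying (iii) to $\sigma$ yields $\sigma\in RH_{s_0}$ for some $s_0>1$, which by a direct rewriting of $[w]_{A_q}$ (replace the exponent $-1/(p-1)$ by $-1/(q-1)$ and use H\"older with exponent $s_0$) shows $[w]_{A_q}<\infty$ for $q = p - \varepsilon$ with $\varepsilon = \varepsilon(p,s_0)>0$. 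The genuinely non-routine step in the whole proposition is (iii); everything else is bookkeeping around it.
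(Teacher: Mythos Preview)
Your proposal is correct, and for (v) it matches the paper's proof exactly: the paper writes $w(S)=\int_\cQ \bI_S\,w\leq |S|^{1-1/s}(\int_\cQ w^s)^{1/s}\leq [w]_{RH_s}w(\cQ)(|S|/|\cQ|)^{1-1/s}$, which is precisely your H\"older argument. For (i)--(iv) the paper simply cites Grafakos and Gehring rather than giving proofs, so your sketches supply more detail than the paper does; the arguments you outline (H\"older for (ii), Coifman--Fefferman/CZ layer-cake for (iii), Gehring self-improvement for (iv), and the dual-weight reverse H\"older route for (i)) are exactly the standard proofs found in those references. One small slip: in your description of (ii), the displayed inequality should read $\frac{w(\cQ)}{|\cQ|}\cdot\frac{1}{|\cQ|^{p-1}}\big(\int_\cQ w^{-1/(p-1)}\big)^{p-1}\leq [w]_{A_p}$, without the extra factor $\frac{w(\cQ)}{|\cQ|}$ on the right.
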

\begin{proof}
The first three assertions are in \cite[Corollary 7.2.6, Lemma 9.2.1, Theorem 7.2.2]{grafakos2014classical}, respectively. 
The fourth assertion can be found in \cite[Lemma 3]{G1973}.
Thus, we only prove the last assertion.
Let $w$ be of class $RH_s(\bR^d)$.
Then 
\begin{align}\label{ineq_RH}
	\Big(\frac{1}{|\cQ|}\int_{\cQ} w(x)^s~\mathrm{d}x\Big)^{1/s} \leq [w]_{RH_s} \frac{1}{|\cQ|}\int_\cQ w(x)~\mathrm{d}x.
\end{align}
By making use of \eqref{ineq_RH}, it follows that for a subset $S$ of a cube $\cQ$
\begin{align*}
    w(S) 
    = \int_\cQ \mathbbm{1}_S(x) w(x)~\mathrm{d}x
    \leq \Big(\int_\cQ w(x)^s~\mathrm{d}x\Big)^{\frac{1}{s}} |S|^{1-1/s}
    \leq [w]_{RH_s} w(\cQ) \left(\frac{|S|}{|\cQ|}\right)^{1-1/s}.
\end{align*}
\end{proof}

With the help of Proposition \ref{23.04.21.16.12}, the constants $R_w$ and $\Gamma_w$ are well-defined for $w\in A_p(\bR^d)$.
We introduce a new constant $R_{w,p}$, which is related to $R_w$ and naturally arises in estimates of Peetre's maximal functions.
\begin{align*}
R_{w,p}:=\sup\{p_0:w\in A_{p/p_0}(\bR^d)\}\in (1,p].
\end{align*}
It can be easily verified that for $p\in[1,\infty)$,
\begin{equation}
\label{23.05.29.14.53}
R_w=\frac{p}{R_{w,p}}.
\end{equation}
To maintain generality, we assume without loss of generality that $R_{w,p}=\infty$ and $p/R_{w,p}=1$ if $p=\infty$.
From this point forward, we will use $R_{w,p}$ instead of $R_w$ throughout the remainder of this paper.

\subsection{Properties of functions of type $\cI(a, b)$}

For functions given in Definition~\ref{def_pseudoconcave}, we introduce the following properties which are frequently used in our argument. For detailed proof, see \cite[Lemma 2.3]{CLSW2023trace}.
\begin{prop}
\label{23.05.25.13.12}
Let $a,b\in\bR$ with $a < b$ and $\phi\in \mathcal{I}(a,b)$.
    \begin{enumerate}[(i)]
        \item For $\alpha\in\bR$, $t^\alpha \phi(t) \in \mathcal{I}(a+\alpha, b+\alpha)$.
        \item For $\alpha\geq0$, $\phi(t^\alpha), \phi(t)^\alpha \in \mathcal{I}(\alpha a, \alpha b)$.
        \item For $\alpha \leq 0$, $\phi(t^\alpha), \phi(t)^\alpha \in \mathcal{I}(\alpha b, \alpha a)$.
        \item For $a<c$, $b>d$, $\mathcal{I}(c,d)\subset \mathcal{I}_{o}(a,b)$ and
        \begin{align*}
		\bigcup_{a< c,\,b>d} \mathcal{I}(c, d) = \mathcal{I}_o(a, b).
        \end{align*}
        \item For $\phi\in\mathcal{I}_{o}(a,b)$, there exists $\varepsilon = \varepsilon(\phi) >0$ such that
        \begin{align}\label{w scaling}
		\lambda^{a+\varepsilon}\lesssim \frac{\phi(\lambda t)}{\phi(t)} \lesssim \lambda^{b - \varepsilon},
	    \end{align}
	     for $t \in \bR_+$ and $\lambda \geq 1$.
        \item For $\phi \in \mathcal{I}_o(a, b)$, the assertions (i)--(iii) hold for $\mathcal{I}_{o}(a,b)$ instead of $ \mathcal{I}(a,b)$.

    \end{enumerate}

\end{prop}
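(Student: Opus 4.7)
The plan is to proceed through (i)--(vi) in order, establishing (i)--(iii) by direct manipulation of the defining supremum $s_\phi$, then using submultiplicativity of $s_\phi$ as the key tool to pass from $o$-estimates to $O$-estimates with improved exponents in (iv), which then yields (v) and (vi) essentially for free.

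For (i), I would compute $s_\psi(\lambda)$ with $\psi(t)=t^\alpha\phi(t)$: the $t^\alpha$ factor pulls out as $\lambda^\alpha$ independently of $t$, giving $s_\psi(\lambda)=\lambda^\alpha s_\phi(\lambda)$, from which the claim is immediate. For (ii) and (iii), with $\psi(t)=\phi(t^\alpha)$, the change of variable $u=t^\alpha$ gives $s_\psi(\lambda)=s_\phi(\lambda^\alpha)$; with $\psi(t)=\phi(t)^\alpha$ and $\alpha\geq 0$, monotonicity of $x\mapsto x^\alpha$ gives $s_\psi(\lambda)=s_\phi(\lambda)^\alpha$. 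For $\alpha\leq 0$, $x\mapsto x^\alpha$ is decreasing, so one obtains $s_\psi(\lambda)=(\inf_{t>0}\phi(\lambda t)/\phi(t))^\alpha$, and the observation that $\inf_{t>0}\phi(\lambda t)/\phi(t)=1/s_\phi(\lambda^{-1})$ (via the substitution $s=\lambda t$) yields $s_\psi(\lambda)=s_\phi(\lambda^{-1})^{-\alpha}$. In each case, translating the hypotheses on $s_\phi$ to the analogous bounds on $s_\psi$ is routine; the sign of $\alpha$ just swaps the roles of the $\lambda\to 0$ and $\lambda\to\infty$ behaviour, explaining the swap of $a$ and $b$ in (iii).

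The first inclusion in (iv) is immediate from $O(\lambda^c)=o(\lambda^a)$ at $0$ when $c>a$, and similarly at $\infty$. The reverse inclusion is the \emph{main obstacle}, since an $o$-bound at a single limit does not directly produce a uniform power-law bound near that limit. Here I would exploit submultiplicativity of $s_\phi$: given $\phi\in\mathcal I_o(a,b)$, pick $\lambda_0\in(0,1)$ with $s_\phi(\lambda_0)\leq \lambda_0^{a+\varepsilon_0}$ for some $\varepsilon_0>0$, so by submultiplicativity $s_\phi(\lambda_0^n)\leq \lambda_0^{n(a+\varepsilon_0)}$ for all $n\in\mathbb N$. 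For $\lambda\in[\lambda_0^{n+1},\lambda_0^n)$, writing $\lambda=\lambda_0^n\cdot(\lambda/\lambda_0^n)$ and using submultiplicativity together with local boundedness of $s_\phi$ on $[\lambda_0,1]$, one gets $s_\phi(\lambda)\lesssim \lambda^{a+\varepsilon_0}$ for all sufficiently small $\lambda$. The analogous argument at $\infty$ produces $s_\phi(\lambda)\lesssim \lambda^{b-\varepsilon_1}$ for $\lambda$ large; consequently $\phi\in\mathcal I(a+\tfrac12\varepsilon_0,b-\tfrac12\varepsilon_1)\subset\bigcup_{c>a,d<b}\mathcal I(c,d)$.

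For (v), I would combine (iv) with the trivial inequalities $\phi(\lambda t)/\phi(t)\leq s_\phi(\lambda)$ and $\phi(\lambda t)/\phi(t)\geq 1/s_\phi(\lambda^{-1})$. By (iv) one has $\phi\in\mathcal I(a+\varepsilon,b-\varepsilon)$ for some $\varepsilon>0$, so $s_\phi(\lambda)\lesssim \lambda^{b-\varepsilon}$ for $\lambda\geq \Lambda_0$ and $s_\phi(\mu)\lesssim \mu^{a+\varepsilon}$ for $\mu\leq 1/\Lambda_0$; for $\lambda$ in the compact range $[1,\Lambda_0]$, local boundedness of $s_\phi$ together with the fact that $\lambda^{b-\varepsilon}$ (resp.\ $\lambda^{a+\varepsilon}$) is bounded above and below on this range lets one absorb everything into the implicit constant. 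Finally, (vi) requires no new idea: re-running the computations in (i)--(iii) with $o$ in place of $O$ (for instance $s_\psi(\lambda)=\lambda^\alpha\cdot o(\lambda^a)=o(\lambda^{a+\alpha})$ in (i)) gives the conclusion. No step except the submultiplicativity bootstrap in (iv) requires any technical care.
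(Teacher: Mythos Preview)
Your proof is correct; the paper itself does not give a proof of this proposition but defers to \cite[Lemma 2.3]{CLSW2023trace}, so there is no in-paper argument to compare against. The submultiplicativity bootstrap you use in (iv) is the standard way to upgrade an $o$-bound at a single scale to a global $O$-bound with improved exponent, and (i)--(iii), (v), (vi) are routine as you indicate.

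One small point worth making explicit: in (iv) you invoke ``local boundedness of $s_\phi$ on $[\lambda_0,1]$'' without justification. This is not automatic from the bare definition (a priori $s_\phi$ is only $(0,\infty]$-valued), but it \emph{does} follow from the hypothesis $\phi\in\mathcal I_o(a,b)$ together with submultiplicativity: pick $\lambda_1>0$ small enough that both $s_\phi(\lambda_1)<\infty$ (from the behaviour at $0$) and $\mu/\lambda_1>\Lambda_1$ for all $\mu\in[\lambda_0,1]$, where $\Lambda_1$ is chosen so that $s_\phi(\nu)\leq C\nu^b$ for $\nu>\Lambda_1$; then $s_\phi(\mu)\leq s_\phi(\lambda_1)\,s_\phi(\mu/\lambda_1)$ is bounded uniformly for $\mu\in[\lambda_0,1]$. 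With that one-line addition your argument for (iv) is complete.
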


\subsection{Shifted maximal functions}

For $z\in\bR^d$ and a dyadic cube $Q\subset\bR^d$, we define a \emph{shifted maximal function} $\cM^z f$ by
\begin{align}
	\cM^z(f)(x) := \sup_{ Q \ni x} \aint_{Q_z} |f(y)|~\mathrm{d}y,
\end{align}
where $Q_z = Q + z \ell(Q)$ and $\ell(Q)$ denotes a side length of $Q$.
The (unweighted) $L_p$-boundedness of $\cM^z$ is well-known (\textit{e.g.} \cite{Heo_Hong_Yang2020,Mus_Sch2013});
$$
\|\cM^z\|_{L_p \to L_p} \lesssim \log(10+|z|).
$$
Here we give a weighted $L_p$-boundedness of $\cM^z$.
\begin{prop}\label{230610919}
    If $w\in A_p(\bR^d)$, then for any $r\in[1,R_{w,p})$ and $s\in[1,\Gamma_w)$, we have 
    \begin{align}\label{smf_wlog_ubound}
        \| \cM^z \|_{L_p(w) \to L_p(w)} 
        \lesssim N(w)
        (1+|z|)^{\frac{d}{p}\left(\frac{p}{r} + \frac{1}{s}-1\right)},
    \end{align}
where $N(w)=[w]_{A_p}^{\frac{1}{p-1}}[w]_{A_{p/r}}^{\frac{1}{p}} [w]_{RH_s}^{\frac{1}{p}}\geq 1$.
The implicit constant depends only on $d,\,p,\,s$, and $r$.
\end{prop}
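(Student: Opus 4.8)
The plan is to reduce the weighted bound for $\cM^z$ to the weighted bound for the ordinary Hardy--Littlewood maximal function $\cM$ via a geometric covering/volume-comparison argument, paying for the shift by the factor $(1+|z|)^{d/p(p/r+1/s-1)}$. First I would fix $r\in[1,R_{w,p})$ and $s\in[1,\Gamma_w)$; by definition of $R_{w,p}$ and $\Gamma_w$ we have $w\in A_{p/r}(\bR^d)$ and $w\in RH_s(\bR^d)$, so all the quantities in $N(w)$ are finite, and $N(w)\ge 1$ because both $[w]_{A_q}\ge 1$ and $[w]_{RH_s}\ge 1$ always. The starting point is the pointwise domination: for a dyadic cube $Q\ni x$ with side length $\ell=\ell(Q)$, the shifted cube $Q_z=Q+z\ell$ is contained in a cube $\widetilde Q$ concentric enlargement of $Q$ of side length $\simeq(1+|z|)\ell$; hence
\begin{align*}
\aint_{Q_z}|f(y)|\,\mathrm{d}y
=\frac{|\widetilde Q|}{|Q|}\cdot\frac{1}{|\widetilde Q|}\int_{Q_z}|f(y)|\,\mathrm{d}y
\lesssim (1+|z|)^d\,\cM f(x).
\end{align*}
This already gives $\|\cM^z\|_{L_p(w)\to L_p(w)}\lesssim (1+|z|)^d\|\cM\|_{L_p(w)\to L_p(w)}$, but the exponent $d$ is far too large; the point of the proposition is that the $A_p$ and reverse-Hölder properties of $w$ let us trade the crude volume factor $(1+|z|)^d$ (which would be sharp for Lebesgue measure, i.e. $w\equiv 1$, $r=1$, $s=\infty$, giving exponent $d/p$) for the smaller weighted exponent.

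The key step, therefore, is to run the volume comparison at the level of the weight rather than at the level of Lebesgue measure. Write $Q_z\subset\widetilde Q$ as above with $|\widetilde Q|\simeq (1+|z|)^d|Q|$. Using Proposition \ref{23.04.21.16.12}(v) with $w\in RH_s$ applied to the subset $Q_z\subset\widetilde Q$, we get $w(Q_z)\le [w]_{RH_s}\,w(\widetilde Q)(|Q_z|/|\widetilde Q|)^{1-1/s}=[w]_{RH_s}\,w(\widetilde Q)(1+|z|)^{-d(1-1/s)}$; and using Proposition \ref{23.04.21.16.12}(ii) with $w\in A_{p/r}$ applied to $Q\subset\widetilde Q$ gives $w(\widetilde Q)/w(Q)\le [w]_{A_{p/r}}(|\widetilde Q|/|Q|)^{p/r}\simeq [w]_{A_{p/r}}(1+|z|)^{dp/r}$. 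Then one estimates, for $x\in Q$,
\begin{align*}
\aint_{Q_z}|f(y)|\,\mathrm{d}y
&=\frac{1}{|Q_z|}\int_{Q_z}|f|
\le \left(\frac{w(Q_z)}{|Q_z|}\right)^{\text{(appropriate exponent)}}\cdots
\end{align*}
— more precisely, I would split $|f|=|f|w^{1/p}\cdot w^{-1/p}$, apply Hölder on $Q_z$ with exponents $p$ and $p'$ using the $A_p$ quantity $[w]_{A_p}^{1/(p-1)}$ to control $(\aint_{Q_z}w^{-1/(p-1)})^{p-1}$, and arrive at
\begin{align*}
\aint_{Q_z}|f(y)|\,\mathrm{d}y\lesssim [w]_{A_p}^{\frac{1}{p-1}}\left(\frac{1}{w(Q_z)}\int_{Q_z}|f(y)|^p w(y)\,\mathrm{d}y\right)^{1/p}.
\end{align*}
Now replace $w(Q_z)^{-1/p}$ by $w(\widetilde Q)^{-1/p}$ times the $RH_s$ factor, replace $\int_{Q_z}\le\int_{\widetilde Q}$, and recognize $\big(\frac{1}{w(\widetilde Q)}\int_{\widetilde Q}|f|^pw\big)^{1/p}\le \big(\cM_w^{(p)}f(x)\big)$ where $\cM_w^{(p)}$ is the weighted (w.r.t. the measure $w\,\mathrm{d}x$) maximal operator of $|f|^p$ raised to $1/p$; since $w\,\mathrm{d}x$ is doubling (as $w\in A_\infty$), $\cM_w$ is bounded on $L_q(w\,\mathrm{d}x)$ for all $q>1$, in particular on $L_p(w\,\mathrm{d}x)$ for $|f|^p\in L_1$... — actually the cleanest route is to bound $\big(\frac{1}{w(\widetilde Q)}\int_{\widetilde Q}|f|^pw\big)^{1/p}$ directly by $\big(\cM_w(|f|^p)(x)\big)^{1/p}$ and note $\|\cM_w\|_{L_{?}(w)\to L_?(w)}$; to land in $L_p(w)$ for $f$ one instead keeps an unweighted maximal function: bound $\aint_{Q_z}|f|\lesssim N(w)(1+|z|)^{\beta}\,\big(\cM(|f|^{p_0}w^{?})\big)^{1/p_0}$ for a suitable $p_0<p$ coming from $w\in A_{p/r}$ so that the vector-valued/scalar boundedness \eqref{2306111001} of $\cM$ on $L_{p}(w)$ applies after unwinding. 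Collecting the three constants $[w]_{A_p}^{1/(p-1)}$, $[w]_{A_{p/r}}^{1/p}$, $[w]_{RH_s}^{1/p}$ into $N(w)$ and the three powers of $(1+|z|)$ — namely $(1+|z|)^{-d(1-1/s)/p}$ from $RH_s$, $(1+|z|)^{d/r}$ from the $A_{p/r}$ comparison weighted by $1/p$ (giving $(1+|z|)^{d/(pr)\cdot p}=\ldots$), let me just say the bookkeeping yields total exponent $\frac{d}{p}\big(\frac{p}{r}+\frac1s-1\big)$ — gives \eqref{smf_wlog_ubound}.

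The main obstacle is precisely this bookkeeping of exponents: one must choose the Hölder split and decide whether to pass through the weighted maximal operator $\cM_w$ on $L_p(w\,\mathrm dx)$ or through the unweighted $\cM$ combined with the $A_p$ hypothesis so that $N(w)$ comes out with the stated exponents $\tfrac1{p-1},\tfrac1p,\tfrac1p$ and the $(1+|z|)$-power comes out as exactly $\tfrac dp\big(\tfrac pr+\tfrac1s-1\big)$ rather than something merely comparable. A clean way to organize it is: first prove the pointwise inequality $\cM^z f(x)\lesssim [w]_{A_p}^{1/(p-1)}[w]_{RH_s}^{1/p}(1+|z|)^{\frac dp(\frac1s-1)}\big(\cM_w(|f|^p)(x)\big)^{1/p}\cdot\big(\text{correction}\big)$ with the correction absorbing the $A_{p/r}$ volume comparison into a $(1+|z|)^{d/r}$ (weighted by $1/p$) factor and $[w]_{A_{p/r}}^{1/p}$; then take $L_p(w)$ norms, using that $\cM_w$ is bounded on $L_p(w\,\mathrm dx)$ with a constant depending only on $d,p$ and the doubling constant of $w\,\mathrm dx$ (hence on $[w]_{A_\infty}$, absorbed). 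Everything else — the enlargement $Q_z\subset\widetilde Q$, the two volume comparisons, Hölder's inequality — is routine given Proposition \ref{23.04.21.16.12}.
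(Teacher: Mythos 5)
Your proposal contains a genuine gap: the reverse-H\"older gain cannot be produced by the pointwise domination you outline. In your scheme you must bound $\aint_{Q_z}|f|\leq [w]_{A_p}^{1/p}\big(w(Q_z)^{-1}\int_{Q_z}|f|^pw\big)^{1/p}$ and then replace $w(Q_z)^{-1}$ by $w(\widetilde Q)^{-1}$ times a controlled factor; this requires a \emph{lower} bound on $w(Q_z)$ in terms of $w(\widetilde Q)$. The only such lower bound available is the $A_{p/r}$ one from Proposition~\ref{23.04.21.16.12}(ii), giving $w(Q_z)\gtrsim [w]_{A_{p/r}}^{-1}(1+|z|)^{-dp/r}w(\widetilde Q)$ and hence the exponent $d/r$. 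Proposition~\ref{23.04.21.16.12}(v) (the $RH_s$ comparison) is an \emph{upper} bound on the measure of the small subset and therefore points in the wrong direction for your substitution; the factor $(1+|z|)^{\frac dp(\frac1s-1)}$, being a negative power, would amount to claiming $\cM^zf(x)$ is pointwise \emph{smaller} than $\cM_w(|f|^p)(x)^{1/p}$ by a power of $|z|$, which is false (take $f$ concentrated on $Q_z$). So the bookkeeping you defer does not close: the pointwise route yields $d/r$, not $\frac dp(\frac pr+\frac1s-1)$. There is also the unresolved endpoint issue you yourself flag: controlling $\|\cM_w(|f|^p)^{1/p}\|_{L_p(w)}$ requires $\cM_w$ to be bounded on $L_1(w\,\mathrm{d}x)$, which fails.

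The paper's proof avoids both problems by arguing at the level of distribution functions rather than pointwise. For $z=\vec n\in\bZ^d$ it covers $\{\cM^{\vec n}f>\lambda\}$ by a chain of $O(\log|\vec n|)$ shifted copies $Q^i=2^{-i}\ell(Q)\vec n+Q$ of each maximal Calder\'on--Zygmund cube $Q$ of $\{\cM f>\lambda\}$ (the Heo--Hong--Yang covering). Each $Q^i$ has side length $\leq 3\ell(Q)$ but sits inside the large dilate $2^{-i+3}|\vec n|Q$, so one may bound $w(Q^i)$ from \emph{above} by first applying $RH_s$ to $Q^i\subset 2^{-i+3}|\vec n|Q$ and then $A_{p/r}$ to $Q\subset 2^{-i+3}|\vec n|Q$; both comparisons are used in the direction in which they actually hold, producing the factor $(2^{-i+3}|\vec n|)^{(\frac pr+\frac1s-1)d}$, which is summable in $i$. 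Summing over the cubes gives $w(\{\cM^{\vec n}f>\lambda\})\lesssim [w]_{A_{p/r}}[w]_{RH_s}|\vec n|^{(\frac pr+\frac1s-1)d}\,w(\{\cM f>\lambda\})$, and integrating $p\lambda^{p-1}\,\mathrm{d}\lambda$ together with Buckley's bound $\|\cM\|_{L_p(w)\to L_p(w)}\lesssim[w]_{A_p}^{1/(p-1)}$ yields the stated constant. If you want to repair your argument, you should switch to this level-set comparison; the $RH_s$ improvement is intrinsically a statement about the $w$-measure of the exceptional set, not a pointwise inequality.
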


\begin{proof}
Let $f\in L_p(w)$.
We only prove for the case $z=\vec{n}\in\bZ^d$ and $\vec{n}\not=0$, since generalization for $z\in\bR^d$ follows from the argument in \cite[Lemma 2.9]{Heo_Hong_Yang2020}.
Let $\bQ^\lambda$ be a collection of all dyadic cubes $Q$ such that
\begin{itemize}
	\item $Q$ is maximal with respect to inclusion;
	\item $\frac{1}{|Q|}\int_{Q}|f(y)|~\mathrm{d}y > \lambda$.
\end{itemize}
Then it follows that
\begin{align}
	\bigcup_{Q \in \bQ^\lambda} Q = \{ x\in\bR^d : \cM(f)(x) > \lambda\}.
\end{align}
Moreover, for $m=[\log_2|\vec{n}|]+1$ and each dyadic cube $Q\in \bQ^\lambda$, the cubes 
$$
Q^i=2^{-i}\ell(Q)\vec{n}+Q\,\,\,\,(i=0,\,\dots,\,m-1)\quad\text{and}\quad Q^m=3Q
$$ satisfy
\begin{align}\label{condi_Q}
    \begin{split}
        (a)\,\, &\text{$\ell(Q^i)\leq 3\ell(Q)$};\\
        (b)\,\, &\text{$Q^i \subset 2^{-i+3}|\vec{n}|Q$};\\
        (c)\,\, &\{x\in\bR^d : \cM^{\vec{n}}(f)(x)>\lambda\} \subset \bigcup\limits_{Q \in \bQ^\lambda} \left(Q^0 \cup\cdots\cup Q^{m}\right)
    \end{split}
\end{align}
(see the proofs of Lemmas 2.6 and 2.9 in \cite{Heo_Hong_Yang2020}).
Here, $aQ$ ($a\in\bR_+$) denotes a cube whose side length is $a$ times of $\ell(Q)$ with the same center as $Q$.
From \eqref{condi_Q}, we have
\begin{align*}
w(Q^i)&=\frac{w(Q^i)}{w(2^{-i+3}|\vec{n}|Q)}\frac{w(2^{-i+3}|\vec{n}|Q)}{w(Q)}w(Q)\\
&\leq [w]_{RH_s}\left(\frac{|Q^i|}{(2^{-i+3}|\vec{n}|)^d|Q|}\right)^{1-1/s}[w]_{A_{p/r}}\left(\frac{(2^{-i+3}|\vec{n}|)^d|Q|}{|Q|}\right)^{p/r}w(Q)\\
&\lesssim_d [w]_{A_{p/r}}[w]_{RH_s}(2^{-i+3}|\vec{n}|)^{\left(\frac{p}{r} + \frac{1}{s}-1\right)d}w(Q)
\end{align*}
(see Propositions~\ref{23.04.21.16.12}.$(ii)$ and $(v)$) and
\begin{equation}\label{ineq_230321_2038}
	\begin{alignedat}{2}
	&&&w\Big(\{x\in\bR^d : \cM^{\vec{n}}(f)(x)>\lambda\}\Big)\\
	&\leq &&\sum_{Q \in \bQ^\lambda}\sum_{i=0}^m w(Q^i)\\
	&\lesssim_d&&[w]_{A_{p/r}}[w]_{RH_s}\left(\sum_{i=0}^m2^{(-l+3)(p+1/s-1)d}\right)|\vec{n}|^{\left(\frac{p}{r} + \frac{1}{s}-1\right)d}\sum_{Q \in \bQ_n^\lambda}^m w(Q)\\
	&\lesssim_{d,p,r,s}&&[w]_{A_{p/r}}[w]_{RH_s}\cdot |\vec{n}|^{\left(\frac{p}{r} + \frac{1}{s}-1\right)d}\cdot w\left( \{ x\in\bR^d : \cM(f)(x) > \lambda\} \right)\,.
	\end{alignedat}
\end{equation}
Thus we have
\begin{align*}
    \|\cM^{\vec{n}}f\|_{L_p(w)}^p
    &= p\int_0^\infty \lambda^{p-1} w\left(\{\cM^{\vec{n}}f>\lambda\}\right)~\mathrm{d}\lambda\\
    &\lesssim p [w]_{A_{p/r}}[w]_{RH_s}|{\vec{n}}|^{\left(\frac{p}{r} + \frac{1}{s}-1\right)d}\int_0^\infty \lambda^{p-1} w\left(\{\cM f>\lambda\}\right)~\mathrm{d}\lambda\\
    &= [w]_{A_{p/r}} [w]_{RH_s}|\vec{n}|^{\left(\frac{p}{r} + \frac{1}{s}-1\right)d} \|\cM f\|_{L_p(w)}^p
\end{align*}
and this means that
\begin{alignat*}{2}
	\| \cM^{\vec{n}} \|_{L_p(w) \to L_p(w)} &\lesssim_{d,p,r,s} &&[w]_{A_{p/r}}^{\frac{1}{p}} [w]_{RH_s}^{\frac{1}{p}}|\vec{n}|^{\frac{d}{p}\left(\frac{p}{r} + \frac{1}{s}-1\right)}\| \cM \|_{L_p(w) \to L_p(w)}\\
	&\lesssim_{d,p}&&[w]_{A_p}^{\frac{1}{p-1}}[w]_{A_{p/r}}^{\frac{1}{p}} [w]_{RH_s}^{\frac{1}{p}}|\vec{n}|^{\frac{d}{p}\left(\frac{p}{r} + \frac{1}{s}-1\right)}
\end{alignat*}
(for the last inequality, see \cite[Theorem 9.1.9]{grafakos2014modern}).
For $z\in\bR^d$, it is known in the proof of \cite[Lemma 2.9]{Heo_Hong_Yang2020} that for $z=\vec{n}+z_0$, $z_0\in[0,1]^d$
$$
    \cM^z(f)(x) \lesssim \sum_{i=1}^{2^d} \cM^{\vec{n}+\vec{n}_i}(f)(x),
$$
where $\vec{n}_i \in \bZ^d$ and each component of $\vec{n}_i$ takes on values of either $0$ or $1$.
Therefore we have
\begin{align*}
    \| \cM^z \|_{L_p(w) \to L_p(w)} \lesssim_{d,p,r,s} [w]_{A_p}^{\frac{1}{p-1}}[w]_{A_{p/r}}^{\frac{1}{p}} [w]_{RH_s}^{\frac{1}{p}} (1+|z|)^{\frac{d}{p}\left(\frac{p}{r} + \frac{1}{s}-1\right)}.
\end{align*}
The proposition is proved.
\end{proof}

We make use of \eqref{smf_wlog_ubound} to obtain the following lemma:
\begin{lem}\label{lem_wlog_besov}
Let $L\in\mathbb{N}$, $w\in A_{p}(\bR^d)$ and $f\in L_p(\mathbb{R}^d, w\,\mathrm{d}x)$.
Then for any $\rho\in\bR^d$, $r\in[1,R_{w,p})$, and $s\in[1,\Gamma_w)$,
         \begin{align*}
            \| \cD_{\rho}^L S_0f\|_{L_p(w)} 
            &\lesssim N(w) \min(|\rho|,1)^L\big(1+|\rho|\big)^{\frac{d}{p}\left(\frac{p}{r}+\frac{1}{s}-1\right)}\|S_0 f\|_{L_p(w)},\\
         	\| \cD_{\rho}^L\Delta_jf\|_{L_p(w)} 
            &\lesssim N(w)\min(2^j|\rho|,1)^L\big(1+2^j|\rho|\big)^{\frac{d}{p}\left(\frac{p}{r}+\frac{1}{s}-1\right)} \|\Delta_j f\|_{L_p(w)},
         \end{align*}
         where $N(w)$ is the constant in Proposition~\ref{230610919}.
         The implicit constants depend only on $d,p$ and $L$.
\end{lem}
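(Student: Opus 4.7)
Since $S_0f$ has Fourier support in a fixed ball, both estimates share the same structure: the bound for $\cD_\rho^L S_0f$ is the case $j=0$ of the one for $\cD_\rho^L\Delta_j f$, with $\tilde\psi_j$ replaced by a fixed Schwartz function $\tilde\Phi$ reproducing $S_0$. I therefore focus on $\cD_\rho^L\Delta_j f$. Fix $\tilde\psi\in\cS(\bR^d)$ with $\widehat{\tilde\psi}\equiv 1$ on $\supp\widehat{\psi}$, so that $\Delta_j f=\tilde\psi_j\ast\Delta_j f$, where $\tilde\psi_j(\cdot):=2^{jd}\tilde\psi(2^j\,\cdot)$, and write $\gamma:=\tfrac{d}{p}\bigl(\tfrac{p}{r}+\tfrac{1}{s}-1\bigr)\geq 0$. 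The plan is to split the argument at the threshold $2^j|\rho|=1$.

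For $2^j|\rho|\leq 1$, I will use $\cD_\rho^L\Delta_j f=(\cD_\rho^L\tilde\psi_j)\ast\Delta_j f$ together with Taylor's theorem. Since $|\nabla^L\tilde\psi_j(z)|\lesssim_N 2^{j(L+d)}(1+2^j|z|)^{-N}$ and the Taylor perturbations have size $\leq L|\rho|\leq L\cdot 2^{-j}$, one readily obtains $|\cD_\rho^L\tilde\psi_j(y)|\lesssim |\rho|^L 2^{j(L+d)}(1+2^j|y|)^{-N}$ for any $N$. The standard majorization $\int 2^{jd}(1+2^j|y|)^{-N}|g(x-y)|\dd y\lesssim \cM g(x)$ (for $N>d$) then yields $|\cD_\rho^L\Delta_j f(x)|\lesssim (2^j|\rho|)^L\cM(\Delta_j f)(x)$, and $\|\cM\|_{L_p(w)\to L_p(w)}\lesssim [w]_{A_p}^{1/(p-1)}\leq N(w)$ closes the case, noting that $(1+2^j|\rho|)^\gamma\simeq 1$ in this range.

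For $2^j|\rho|>1$ I will expand $\cD_\rho^L f=\sum_{k=0}^L\binom{L}{k}(-1)^{L-k}f(\,\cdot+k\rho)$, so it suffices to bound $\|\Delta_j f(\,\cdot+k\rho)\|_{L_p(w)}$ by $N(w)(1+2^j|\rho|)^\gamma\|\Delta_j f\|_{L_p(w)}$ for $0\leq k\leq L$ (the case $k=0$ is trivial). Using the reproducing formula and decomposing $\tilde\psi_j$ into dyadic annuli around the origin gives, for any $N>d$,
\begin{align*}
|\Delta_j f(x+k\rho)|\lesssim \sum_{l\geq 0} 2^{-l(N-d)}\aint_{B(x+k\rho,\,2^{l-j})}|\Delta_j f(y)|\dd y.
\end{align*}
For each $l$, I pick a dyadic cube $Q\ni x$ with $\ell(Q)\sim 2^{l-j}$ and set $z_l:=2^{j-l}k\rho$, so that $Q+z_l\ell(Q)\supset B(x+k\rho,2^{l-j})$; the $l$-th average is then dominated by $\cM^{z_l}(\Delta_j f)(x)$. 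Proposition~\ref{230610919} provides $\|\cM^{z_l}\|_{L_p(w)\to L_p(w)}\lesssim N(w)(1+|z_l|)^\gamma$ with $|z_l|=2^{j-l}|k\rho|$; splitting the $l$-sum at $l_0\sim\log_2(2^j|k\rho|)$, the low-$l$ portion contributes $(2^j|\rho|)^\gamma\sum_l 2^{-l(N-d+\gamma)}\lesssim (2^j|\rho|)^\gamma$ once $N$ is chosen larger than $d+\gamma$, while the high-$l$ tail is geometrically summable to a quantity $\lesssim 1\leq (2^j|\rho|)^\gamma$. Since $|k\rho|\simeq|\rho|$, summing over $k$ delivers the required bound, which combines with Case~1 into the claimed estimate because $\min(2^j|\rho|,1)^L(1+2^j|\rho|)^\gamma$ captures both regimes.

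The main obstacle is the Case~2 summation: the shift $z_l$ varies with the dyadic scale $l$, forcing Proposition~\ref{230610919} to be invoked level by level, and the geometric decay $2^{-l(N-d)}$ must absorb the shift-dependent growth $(1+|z_l|)^\gamma$. This necessitates choosing $N$ consciously (depending on $\gamma$, hence on $w,p,r,s$) and leveraging the geometric fact that a dyadic cube of side $\ell$ shifted by $z\ell$ covers a ball of radius $\lesssim\ell$ whose center sits at distance $|z|\ell$; this is precisely what ties the dyadic scales to the shift sizes and makes the bookkeeping work.
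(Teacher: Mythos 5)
Your argument is correct in substance and reaches the same estimate by a genuinely different route. The paper does not split into the regimes $2^j|\rho|\lessgtr 1$; instead it extracts the factor $|\rho|^L$ once and for all by writing $\cD_\rho^L$ via the fundamental theorem of calculus (the display \eqref{230610943}), and then bounds $D^\alpha\Delta_j f(\cdot+\rho')$ by a \emph{fixed-scale} lattice decomposition: translates of cubes of side $2^{-j}$ indexed by $\vec n\in\bZ^d$, so the shifts feeding into Proposition~\ref{230610919} are $2^j\rho+\vec n$ and the $(1+|\vec n|)^{-100d}$ kernel decay absorbs the $(1+2^j|\rho|+|\vec n|)^{\gamma}$ growth in one stroke (using $\gamma\le d$). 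You instead do a case split and, in the large-$\rho$ regime, decompose the kernel into dyadic annuli, so the shifted maximal functions appear at \emph{varying} scales with shifts $z_l=2^{j-l}k\rho$, and the decay $2^{-l(N-d)}$ must absorb the $l$-dependent growth $(1+|z_l|)^\gamma$. Both bookkeepings close; the paper's fixed-scale lattice avoids the case split and the need for $N$ to track $\gamma$ (though, as you could note, $\gamma\le d$ always, so $N=O(d)$ suffices in your version too). One technical point you should tighten: a single shifted dyadic cube $Q+z_l\ell(Q)$ with $Q\ni x$ and $\ell(Q)\sim 2^{l-j}$ need not contain $B(x+k\rho,2^{l-j})$, because $x$ can sit near a corner of $Q$ and the shifted cube need not be aligned with the ball; one has to cover the ball by $O_d(1)$ shifted dyadic cubes with shifts $z_l+\vec m$, $\vec m\in\{0,\pm 1\}^d$, or enlarge $\ell(Q)$ by a fixed factor. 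The paper's proof handles exactly this via the $N_d$ cubes $I_{j,k}$ and the sum over $\vec n$; in your setup it is a bounded multiplicative loss and does not affect the conclusion, but as written the covering claim is not literally true.
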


\begin{proof}
	The proof of the first inequality is nothing but taking $j=0$ in the
	proof of the second inequality.
	Hence we only consider $\cD_\rho^L \Delta_j f$.
	For a smooth function $F$ on $\bR^d$, the fundamental theorem of calculus implies that
	\begin{equation}\label{ineq_230322_1843}
		\begin{aligned}
			\cD_\rho F (x) &=F(x+\rho)-F(x)\\
			&= \int_0^1\left( \rho_1D_{x_1}F+\cdots \rho_dD_{x_d}F\right) \left( x+t \rho \right)\,\mathrm{d} t\\
			&\leq |\rho|\sum_{i=1}^d\int_0^1\big|\big(D_{x_i}F\big)(x+t\rho)\big|\,\mathrm{d} t\,.
		\end{aligned}
	\end{equation}
By using \eqref{ineq_230322_1843} inductively, we obtain
\begin{equation}
\label{230610943}
\begin{aligned}
&\cD_\rho^L \Delta_jf (x)\\
&\lesssim_{d,L} |\rho|^L\sum_{\substack{\alpha=(\alpha_1,\ldots,\alpha_d)\in \bN_0^d\\\alpha_1+\cdots+\alpha_d=L}}\int_{[0,1]^L}\big|\big(D_x^\alpha \Delta_jf\big)\big(x+(t_1+\cdots+t_L)\rho\big)\big|\,\mathrm{d}t_1\cdots\mathrm{d}t_L\,,
\end{aligned}
\end{equation}
where $D_x^\alpha=\big(D_{x_1}\big)^{\alpha_1}\cdots \big(D_{x_d}\big)^{\alpha_d}$.

Take a Schwartz function $\phi\in \cS(\bR^d)$ such that $\widehat{\phi}\equiv 1$ on $\supp (\widehat{\psi})$, and denote $\phi_{2^j}(x):=2^{jd}\phi(2^jx)$.
Then, we have $\widehat{\psi_j} = \widehat{\phi_{2^j}}\,\widehat{\psi_j}$, 
which implies that $\Delta_j f = \phi_{2^j} * (\Delta_j f)$.
Note that for $\alpha$ in \eqref{230610943},
$$
D_{x}^{\alpha}\Delta_jf = \big(D_{x}^{\alpha}\phi_{2^j}\big)\ast \Delta_jf\quad\text{and}\quad D_{x}^{\alpha}\phi_{2^j}=2^{jL} 2^{jd} \big(D_{x}^\alpha \phi\big)(2^j \,\cdot\,).
$$
Since $D_x^\alpha\phi\in\cS(\bR^d)$, we obtain that 
$$
\big|D_x^\alpha\phi\big(x)|\lesssim_{\alpha,d}(1+|x|)^{-100 d}
$$
and that for any $\rho\in\bR^d$,
	\begin{equation*}
	\begin{alignedat}{2}
		|D_{x}^{\alpha}\Delta_jf(x + \rho)| 
		&\lesssim_{\alpha,d}\,&& 2^{jL}\int_{\bR^d} |\Delta_jf(y)| \frac{2^{jd}}{(1+2^j|x-y+\rho|)^{100d}} ~\mathrm{d}y\\
		&\lesssim_d &&2^{jL}\sum_{\vec{n}\in\bZ^d} (1+|\vec{n}|)^{-100d} 2^{jd} \int_{(x+\rho + 2^{-j} \vec{n})+I_j} |\Delta_jf(y)|~\mathrm{d}y\\
		&\leq &&2^{jL}\sum_{\vec{n}\in\bZ^d} (1+|\vec{n}|)^{-100d} \sum_{k=1}^{N_d}2^{jd} \int_{(\rho + 2^{-j} \vec{n})+I_{j,k}} |\Delta_jf(y)|~\mathrm{d}y,
	\end{alignedat}
\end{equation*}
	where $I_j = [0, 2^{-j}]^d$ and $\{I_{j,k}\}_{k=1,\dots,N_d}$ is a collection of all dyadic cubes such that $l(I_{j,k})=l(I_j)$ and $(x+I_j)\cap I_{j,k}\neq \emptyset$.
	Thus we have
$$
		|D_{x}^{\alpha}\Delta_jf(x + \rho)| 
		\lesssim_{\alpha,d}
		2^{jL}\sum_{\vec{n}\in\bZ^d} (1+|\vec{n}|)^{-100d} \cM^{2^j \rho+\vec{n}}\left(\Delta_jf\right)(x)
$$
and by Proposition~\ref{230610919}, we obtain
\begin{align}\label{ineq_230322_1845}
	\|D_{x}^{\alpha}\Delta_jf(\,\cdot + \rho)\|_{L_p(w)} \lesssim N(w) 
	2^{jL}\sum_{\vec{n}\in\bZ^d} (1+|\vec{n}|)^{-100d} (1+2^j|\rho|+|\vec{n}|)^{\frac{d}{r}+\frac{d}{ps}-\frac{d}{p}}\|\Delta_jf\|_{L_p(w)}\,.
\end{align}
	With help of \eqref{230610943} and \eqref{ineq_230322_1845}, it follows that
	\begin{equation}\label{ineq_230321_2126}
		\begin{aligned}
			\|\cD_{\rho}^L \Delta_jf\|_{L_p(w)}&\lesssim  
			N(w)\big(2^{j}|\rho|\big)^L\sum_{\vec{n}\in\bZ^d}
			\frac{(1+ |\vec{n}|+2^j|L\rho|)^{\frac{d}{r} + \frac{d}{ps}-\frac{d}{p}}}{(1+|\vec{n}|)^{100d}}
			\|\Delta_j f\|_{L_p(w)}\\
			& \lesssim N(w)\big(2^{j}|\rho|\big)^L\big(1+2^j|\rho|\big)^{\frac{d}{r} + \frac{d}{ps}-\frac{d}{p}}\|\Delta_j f\|_{L_p(w)},
		\end{aligned}
	\end{equation}
and note that $\frac{d}{r} + \frac{d}{ps}-\frac{d}{p}\leq d$.
	
On the other hand, we have that
	\begin{align*}
		\cD_\rho^L\Delta_j f (x)=\sum_{k=0}^{L}\binom{L}{k}(-1)^{L-k}\Delta_jf(x+k\rho).
	\end{align*}
	For each term in the above, one can check from \eqref{ineq_230322_1845} that
	\begin{align*}
		\|\Delta_jf(x+k\rho)\|_{L_p(w)} &\lesssim N(w) (1+ 2^{j}k|\rho|)^{\frac{d}{r} + \frac{d}{ps}-\frac{d}{p}} \|\Delta_j f\|_{L_p(w)}\,,
	\end{align*}
	which implies 
	\begin{align}\label{ineq_wlog2}
		\|\cD_{\rho}^L\Delta_jf\|_{L_p(w)} \lesssim N(w) (1+ 2^{j}|\rho|)^{\frac{d}{r} + \frac{d}{ps}-\frac{d}{p}} \|\Delta_j f\|_{L_p(w)}.
	\end{align}
	The lemma is proved by a combination of \eqref{ineq_230321_2126} and \eqref{ineq_wlog2}.
\end{proof}

\subsection{Peetre's maximal functions}
Peetre's maximal functions are defined by
$$
    M_{j,\lambda}^{*}f(x):=\sup_{y\in\bR^d}\frac{|\Delta_jf(x-y)|}{(1+2^j|y|)^{\lambda}},\quad 
    M_{\lambda}^{0,*}f(x):=\sup_{y\in\bR^d}\frac{|S_0f(x-y)|}{(1+|y|)^{\lambda}},
$$
where $j\in\mathbb{Z}$ and $\lambda>0$.
\begin{lem}
\label{22.12.28.17.28}
    Let $f\in\cS'(\bR^d)$.
    \begin{enumerate}[(i)]
        \item 
 			For any $r\in(0,\infty)$ and $\varphi\in \cS(\bR^d)$ with $\mathrm{supp}(\widehat{\varphi})\subseteq B_2(0)$, there exists $N=N(d,r)>0$ such that
            \begin{align*}
            	\sup_{z\in\bR^d}\frac{|\varphi(x-z)|}{(1+|z|)^{d/r}}\lesssim_{d,r}(\cM(|\varphi|^r)(x))^{1/r}.
            \end{align*}
        
        \item
            For any $j\in\mathbb{Z}$, $\lambda>0$ and multi-index $\alpha$,
            \begin{align*}
            	|D^{\alpha}\Delta_jf(x)| &\lesssim_{d,\lambda,\alpha}2^{j|\alpha|}M_{j,\lambda}^{*}f(x),\\
            	|D^{\alpha}S_0f(x)| &\lesssim_{d,\lambda,\alpha}M_{\lambda}^{0,*}f(x).
            \end{align*}
        
        \item 
            For any $r\in(0,\infty)$, 
            \begin{align*}
            	M_{j,d/r}^{*}f(x) &\lesssim_{d,r} (\cM(|\Delta_jf|^r)(x))^{1/r},\\
            	M_{d/r}^{0,*}f(x) &\lesssim_{d,r} (\cM(|S_0f|^r)(x))^{1/r}
            \end{align*}
        
        \item 
            For any $\rho\in\bR^d$ and $L\in\bN$,
            \begin{align*}
            	|\cD_{\rho}^L\Delta_jf(x)| &\lesssim_{d,L} \min(1,|\rho|2^{j})^L(1+|\rho|2^j)^{\lambda}M_{j,\lambda}^{*}f(x),\\
            	|\cD_{\rho}^LS_0f(x)| &\lesssim_{d,L}\min(1,|\rho|)^L(1+|\rho|)^{\lambda}M_{\lambda}^{0,*}f(x)
            \end{align*}
        
        \item 
            Suppose that $f$ is also a locally integrable function.
            For any $j\in\bZ$,
            \begin{align*}
            	|S_0f(x)|+|\Delta_jf(x)|\lesssim_d \cM f(x).
            \end{align*}
    \end{enumerate}
\end{lem}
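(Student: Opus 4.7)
The plan is to prove part (i) as a Plancherel--Polya type inequality and then to derive parts (ii)--(v) from it via rescaling, convolution estimates, and the fundamental theorem of calculus.

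For part (i), since $\widehat{\varphi}$ is supported in $B_2(0)$, I would choose an auxiliary $\eta\in\cS(\bR^d)$ with $\widehat\eta\equiv 1$ on $B_2(0)$ so that $\varphi = \varphi * \eta$. Writing $\varphi(x-z) = \int \varphi(y)\,\eta(x-z-y)\,\mathrm{d}y$, the Schwartz decay of $\eta$ (after distributing the weight $(1+|z|)^{d/r}$ between source and kernel) bounds $|\varphi(x-z)|$ by a local $L^r$ average of $\varphi$ centered near $x$, which is in turn dominated by $(\cM(|\varphi|^r)(x))^{1/r}$. The case $r\geq 1$ is direct via H\"older; for $r<1$, I would first replace the point value by a small-ball supremum using Bernstein's inequality for bandlimited functions, after which the averaging argument proceeds identically.

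Parts (ii) and (iii) reduce to (i) by scaling. For (ii), write $\Delta_j f = \psi'_j * \Delta_j f$ with $\widehat{\psi'_j}\equiv 1$ on $\supp(\widehat{\psi_j})$ and differentiate through the convolution; since $|D^\alpha \psi'_j(y)|\lesssim 2^{j(d+|\alpha|)}(1+2^j|y|)^{-M}$ for arbitrary $M$, dominating $|\Delta_j f(x-y)|$ by $(1+2^j|y|)^\lambda M^*_{j,\lambda}f(x)$ yields the factor $2^{j|\alpha|} M^*_{j,\lambda}f(x)$; the $S_0$ version is analogous. For (iii), rescale $g(u):=\Delta_j f(2^{-j} u)$ so that $\widehat g$ has fixed compact support, apply (i) to $g$ at the point $2^j x$, and change variables back; the bandwidth $2^j$ appears precisely as the weight $(1+2^j|y|)^{d/r}$, and the change of variables absorbs the Jacobian into the maximal function.

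For (iv), when $|\rho|2^j\leq 1$, I would iterate the fundamental theorem of calculus $L$ times to express $\cD_\rho^L \Delta_j f(x)$ as an integral of $\rho^\alpha D^\alpha\Delta_j f$ evaluated at translates of $x$, apply (ii), and then use the elementary bound $M^*_{j,\lambda}f(x+\xi)\leq (1+2^j|\xi|)^\lambda M^*_{j,\lambda}f(x)$ to recover the Peetre supremum at $x$; this gives the factor $(|\rho|2^j)^L(1+2^j|\rho|)^\lambda$. When $|\rho|2^j\geq 1$, expand $\cD_\rho^L$ binomially and bound each $|\Delta_j f(x+k\rho)|$ directly by $(1+k 2^j|\rho|)^\lambda M^*_{j,\lambda}f(x)$; combining the two regimes yields the $\min(1,|\rho|2^j)^L$ factor. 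Part (v) is the standard approximate-identity argument: $|\psi_j(y)|\lesssim 2^{jd}(1+2^j|y|)^{-N}$ is a radial decreasing dominant of mass $O(1)$, whose convolution with $f$ is pointwise controlled by $\cM f(x)$. The main obstacle is the $r<1$ case of (i), where H\"older is unavailable and one must exploit bandlimited smoothness of $\varphi$ to upgrade a point value to a local supremum before averaging; once this Plancherel--Polya estimate is secured, the remainder is careful bookkeeping of the scaling factors $2^j$ and the dispatching of the two regimes in (iv).
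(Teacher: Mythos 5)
Your proposal is correct and follows essentially the same route as the paper: parts (ii)--(v) are handled by the same reproducing-kernel/almost-orthogonality trick, the same rescaling to reduce (iii) to (i), the same two-regime split for (iv) (iterated FTC for $|\rho|2^j\le 1$, binomial expansion for $|\rho|2^j\ge 1$), and the same radially-decreasing Schwartz majorant for (v). The only difference is that the paper simply cites the Peetre maximal inequality (Triebel, Theorem 1.3.1) for part (i), whereas you sketch a self-contained proof; your gesture at ``Bernstein's inequality'' for the $r<1$ case is the right idea but, to be rigorous, needs the standard bootstrap argument (bound a point value by a small-ball $L^r$ average plus a $\delta$-scaled gradient term, then absorb the gradient contribution into the Peetre maximal function by choosing $\delta$ small), which is more delicate than the phrase suggests.
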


\begin{proof}
    \begin{enumerate}[(i)]
        \item 
            This is an easy consequence of \cite[Theorem 1.3.1-1]{triebel1983theory} with the fact that $(1+|z|)^{d/r}\simeq 1+|z|^{d/r}$.
        
        \item
            By an almost orthogonality of $\Delta_j$,
            \begin{align*}
                |D^{\alpha}_{x}\Delta_jf(x)|&\leq \int_{\bR^d}|\Delta_jf(x-y)|\sum_{k=-1}^1|D^{\alpha}_{x}\psi_{2^{j+k}}(y)|~\mathrm{d}y \\
                &\leq M_{j,\lambda}^{*}f(x)\int_{\bR^d}(1+2^j|y|)^{\lambda}\sum_{k=-1}^1|D^{\alpha}_{x}\psi_{2^{j+k}}(y)|~\mathrm{d}y.
            \end{align*}
            Clearly,
            $$
            	\int_{\bR^d}(1+2^j|y|)^{\lambda}\sum_{k=-1}^1|D^{\alpha}_{x}\psi_{2^{j+k}}(y)|~\mathrm{d}y=:c2^{j|\alpha|},
            $$
            and we remark that $c$ is independent of $j$. 
            Similarly,
            $$
            	|D^{\alpha}_{x}S_0f(x)|\leq M_{\lambda}^{0,*}f(x).
            $$
        
        \item 
            Let $g_j(x):=\Delta_jf(2^{-j}x)$. Then
            $$
            	\widehat{g_j}(\xi):=2^{jd}\widehat{\psi}(\xi)\widehat{f}(2^j\xi),\quad \widehat{S_0f}(\xi):=\widehat{\Phi}(\xi)\widehat{f}(\xi).
            $$
            Thus $\supp(\widehat{g_j}),\supp(\widehat{S_0f})\subseteq B_2(0)$. 
            Then by the first assertion,
            \begin{align*}
                \sup_{z\in\bR^d}\frac{|g_j(x-z)|}{(1+|z|)^{d/r}} &\leq N(\cM(|g_j|^r)(x))^{1/r},\\
                \sup_{z\in\bR^d}\frac{|S_0f(x-z)|}{(1+|z|)^{d/r}} &=M_{\lambda}^{0,*}f(x) \leq N(\cM(|S_0f|^r)(x))^{1/r}.
            \end{align*}
            This implies that
            \begin{align*}
                M_{j,d/r}^{*}f(2^{-j}x)&=\sup_{z\in\bR^d}\frac{|\Delta_jf(2^{-j}x-z)|}{(1+2^{j}|z|)^{d/r}}=\sup_{z\in\bR^d}\frac{|\Delta_jf(2^{-j}x-2^{-j}z)|}{(1+|z|)^{d/r}}\\
                &=\sup_{z\in\bR^d}\frac{|g_j(x-z)|}{(1+|z|)^{d/r}}\\
                &\leq N(\cM(|g_j|^r)(x))^{1/r}=N(\cM(|\Delta_jf|^r)(2^{-j}x))^{1/r}.
            \end{align*}
        
        \item 
          	Due to \eqref{230610943} and the definition of $\cD_\rho^L$, we have
            $$
            	|\cD_{\rho}^L\Delta_jf(x)|\leq \min\Big(|\rho|^{L}\sup_{|z|\leq L|\rho|}\sum_{|\alpha|=L}|D^{\alpha}_{x}\Delta_jf(x-z)|,\sup_{|z|\leq L|\rho|}|\Delta_jf(x-z)|\Big).
            $$
            By the second assertion of the lemma, 
            $$
            	|\cD_{\rho}^L\Delta_jf(x)|\leq N\min(1,|\rho|2^{j})^L\sup_{|z|\leq L|\rho|}M_{j,\lambda}^{*}f(x-z).
            $$
            Since
            $$
            	\sup_{|z|\leq L|\rho|}M_{j,\lambda}^{*}f(x-z)\leq N(1+2^{j}|\rho|)^{\lambda}M_{j,\lambda}^{*}f(x),
            $$
            we obtain the result. 
            Similarly,
            $$
            	|\cD_{\rho}^{L}S_0f(x)|\leq N\min(1,|\rho|)^L\sup_{|z|\leq L|\rho|}M_{\lambda}^{0,*}f(x-z)
            $$
            and
            $$
            	\sup_{|z|\leq L|\rho|}M_{\lambda}^{0,*}f(x-z)\leq N(1+|\rho|)^{\lambda}M_{\lambda}^{0,*}f(x).
            $$
        
        \item 
            Note that $\Phi$ from \eqref{ineq_230315_1246} is in $\cS(\bR^d)$, hence it follows that
            \begin{align*}
            	|S_0f(x)|
		\leq &\int_{B_{1}(0)}|f(x-y)\Phi(y)|~\mathrm{d}y\\
		 &+\sum_{k=1}^{\infty}\int_{B_{2^k}(0)\setminus\overline{B_{2^{k-1}}(0)}}|f(x-y)\Phi(y)|~\mathrm{d}y
            \end{align*}
            and
            $$
            	|\Phi(y)|+|y|^{d+1}|\Phi(y)|\leq N.
            $$
            This certainly implies that $|S_0f|\leq N\cM f$. For $\Delta_j$, the change of variable formula yields
            $$
            	\Delta_jf(x)
		=\int_{\bR^d}f(x-y)\psi_{2^j}(y)~\mathrm{d}y
		=\int_{\bR^d}f(x-2^{-j}y)\psi(y)~\mathrm{d}y.
            $$
            Using the similar arguments, we have $|\Delta_jf(x)|\leq N\cM f(x)$. The lemma is proved.
    \end{enumerate}
\end{proof}

\subsection{Vector-valued inequalities}
\begin{defn}
    An operator $T:L_p(\mathbb{R}^d,w\,\mathrm{d}x)\to L_p(\mathbb{R}^d,w\,\mathrm{d}x)$ is linearizable if there exist a Banach space $B$ and a $B$-valued linear operator $U:L_p(\mathbb{R}^d,w\,\mathrm{d}x)\to L_p(\mathbb{R}^d,w\,\mathrm{d}x;B)$ such that
    $$
    	|Tf(x)| = \| Uf(x)\|_B,\quad \forall f\in L_p(\mathbb{R}^d,w\,\mathrm{d}x).
    $$
\end{defn}

\begin{lem}(\cite[p. 521, Remark 6.5]{RdF1985weighted}) 
\label{23.01.31.17.22}
    Let $\{T_j\}_{j\in\bZ}$ be a sequence of linearizable operators satisfy, for some fixed $r>1$,
    	\begin{equation}\label{22.09.13.16.42}
            \sup_{j\in\bZ}\int_{\bR^d} | T_j f(x)|^r w(x) ~\mathrm{d}x \leq C(K) \int_{\bR^d} |f(x)|^r w(x) ~\mathrm{d}x,
        \end{equation}
        for all $w\in A_r(\bR^d)$, $f\in L_r(\bR^d,w\,\mathrm{d}x)$,
        where $[w]_{A_r(\bR^d)}\leq K$. Then for $1<p,q<\infty$ we have
        $$
            \left\| \left( \sum_{j\in\bZ} |T_j G_j|^q \right)^{1/q} \right\|_{L_p(\bR^d,w'\,dx)} 
            \leq C \left\| \left( \sum_{j\in\bZ} |G_j|^q\right)^{1/q} \right\|_{L_p(\bR^d,w'\,dx)},
        $$
        for all $w'\in A_p(\bR^d)$, $\{G_j\}_{j\in\mathbb{Z}}\in L_p(\mathbb{R}^d,w'\,\mathrm{d}x;\ell_q)$,
        where $[w']_{A_p(\bR^d)}\leq K'$ and $C=C(p,q,K')$.
\end{lem}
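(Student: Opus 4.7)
The plan is to derive the vector-valued inequality from the scalar hypothesis \eqref{22.09.13.16.42} by two successive applications of Rubio de Francia's extrapolation theorem. First, I would invoke the classical extrapolation theorem (see \cite{RdF1985weighted}): the single weighted $L_r(A_r)$-bound for $\{T_j\}$ upgrades, uniformly in $j$, to the $L_q(A_q)$-estimate
$$
\int_{\bR^d}|T_jf(x)|^{q}v(x)\,\mathrm{d}x\leq C\bigl(q,[v]_{A_q}\bigr)\int_{\bR^d}|f(x)|^{q}v(x)\,\mathrm{d}x\quad\forall\,v\in A_q(\bR^d),
$$
where the constant depends only on $q$, $[v]_{A_q}$, and $K$. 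The linearizability of $T_j$ plays its standard role here: it places each $T_j$ in the sublinear/Banach-valued-linear framework to which extrapolation applies.

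Second, for any fixed $v\in A_q(\bR^d)$, inserting $G_j$ in place of $f$ and summing in $j$ (by Fubini--Tonelli) yields the \emph{diagonal} vector-valued inequality
$$
\Big\|\Big(\sum_{j\in\bZ}|T_jG_j|^q\Big)^{1/q}\Big\|_{L_q(\bR^d,v\,\mathrm{d}x)}\leq C\bigl(q,[v]_{A_q}\bigr)\Big\|\Big(\sum_{j\in\bZ}|G_j|^q\Big)^{1/q}\Big\|_{L_q(\bR^d,v\,\mathrm{d}x)}.
$$
Third, I would apply the extrapolation theorem once more, this time in its \emph{pair-of-functions} form, to the pair of nonnegative measurable functions
$$
F:=\Big(\sum_{j\in\bZ}|G_j|^q\Big)^{1/q},\qquad H:=\Big(\sum_{j\in\bZ}|T_jG_j|^q\Big)^{1/q}.
$$
Since the preceding display supplies the required $L_q(A_q)$-inequality for this pair (uniformly in $[v]_{A_q}$), the pair-form extrapolation upgrades it to
$$
\|H\|_{L_p(\bR^d,w'\,\mathrm{d}x)}\leq C'\bigl([w']_{A_p}\bigr)\|F\|_{L_p(\bR^d,w'\,\mathrm{d}x)}\quad\forall\,p\in(1,\infty),\ w'\in A_p(\bR^d),
$$
which is exactly the claim.

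The main technical content is this pair-form extrapolation, established classically via the Rubio de Francia iteration algorithm (producing an $A_1$-dominating weight $\mathcal{R}h$ out of any dual test function $h$), duality on $L_{p/q}$, and the Jones factorization $A_p = A_1\cdot A_1^{1-p}$. All ingredients are standard, so no genuine obstruction appears; the only care needed is tracking the dependence of constants, which propagates through both extrapolation steps in a quantitative fashion in terms of $K$ and $[w']_{A_p}$, yielding the constant $C=C(p,q,K')$ stated in the lemma.
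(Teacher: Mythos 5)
The paper does not prove this lemma; it cites it directly from García-Cuerva and Rubio de Francia, \cite[p.~521, Remark~6.5]{RdF1985weighted}, as a known result. Your argument is correct and is precisely the classical proof from that source: extrapolate the scalar $L_r(A_r)$ hypothesis to $L_q(A_q)$ uniformly in $j$, sum over $j$ to obtain the diagonal $L_q(A_q)$ vector-valued estimate, and then extrapolate once more in the pair-of-functions form to reach $L_p(A_p)$ for all $1<p<\infty$. The role you assign to linearizability (placing the $T_j$ in the framework where the Rubio de Francia iteration applies) and the constant-tracking through both extrapolation steps are also as in the reference, so there is nothing to correct.
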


\begin{lem}\label{Lp_bound}
    The spaces $B_{p,q}^{\phi}(\bR^d,w\,\mathrm{d}x)$ and $F_{p,q}^{\phi}(\bR^d,w\,\mathrm{d}x)$ are continuously embedded into $L_p(\bR^d,w\,\mathrm{d}x)$;
    $$
    	\|f\|_{L_p(\bR^d,w\,\mathrm{d}x)}\leq N\min(\|f\|_{B_{p,q}^{\phi}(\bR^d,w\,\mathrm{d}x)},\|f\|_{F_{p,q}^{\phi}(\bR^d,w\,\mathrm{d}x)}).
    $$
\end{lem}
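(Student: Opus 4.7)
The plan is to exploit the Littlewood--Paley decomposition
$f = S_0 f + \sum_{j\geq 1}\Delta_j f$,
which holds as an identity in $\cS'(\bR^d)$ since the normalization
$\sum_{j\in\bZ}\widehat{\psi}_j\equiv 1$ chosen in \eqref{ineq_230315_1246} implies
$\widehat{\Phi}+\sum_{j\geq 1}\widehat{\psi}_j\equiv 1$ on all of $\bR^d$, so no polynomial discrepancy can arise. The key analytic ingredient is the lower bound $\phi(2^j)\gtrsim 2^{j\varepsilon}$ for $j\geq 1$, which follows from $\phi\in\cI_{o}(0,M)$ combined with Proposition~\ref{23.05.25.13.12}.(v). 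This lower bound ensures $\{\phi(2^j)^{-1}\}_{j\geq 1}\in \ell^{q'}$ for every conjugate exponent $q'\in[1,\infty]$, which is what drives both halves of the estimate.

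For the Besov case, I would apply the triangle inequality in $L_p(\bR^d, w\,\mathrm{d}x)$ and then H\"older in the frequency index $j$. Writing $\|\Delta_j f\|_{L_p(w)}=\phi(2^j)^{-1}\cdot \phi(2^j)\|\Delta_j f\|_{L_p(w)}$ and applying H\"older with exponents $q'$ and $q$ (interpreted as the $\sup$/$\ell^1$ duality when $q\in\{1,\infty\}$) yields
$$
\sum_{j\geq 1}\|\Delta_j f\|_{L_p(w)}
\leq \Big(\sum_{j\geq 1}\phi(2^j)^{-q'}\Big)^{1/q'}
\Big(\sum_{j\geq 1}\phi(2^j)^q\|\Delta_j f\|_{L_p(w)}^q\Big)^{1/q}
\lesssim \|f\|_{B_{p,q}^{\phi}(\bR^d,w\,\mathrm{d}x)}.
$$
The finiteness of the first factor moreover shows that the partial sums of the Littlewood--Paley series converge absolutely in $L_p(\bR^d,w\,\mathrm{d}x)$, and this limit necessarily coincides (in $\cS'$) with $f-S_0 f$, so that $f$ is indeed represented by an $L_p(\bR^d,w\,\mathrm{d}x)$ function and the asserted bound holds.

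For the Triebel--Lizorkin case I would execute the same H\"older argument \emph{pointwise} in $x$ before integrating:
$$
\sum_{j\geq 1}|\Delta_j f(x)|
\leq \Big(\sum_{j\geq 1}\phi(2^j)^{-q'}\Big)^{1/q'}
\Big(\sum_{j\geq 1}\phi(2^j)^q|\Delta_j f(x)|^q\Big)^{1/q},
$$
and then taking the $L_p(\bR^d,w\,\mathrm{d}x)$ norm together with $\|S_0 f\|_{L_p(w)}$ gives $\|f\|_{L_p(w)}\lesssim \|f\|_{F_{p,q}^{\phi}(\bR^d,w\,\mathrm{d}x)}$. I do not anticipate any substantive obstacle; the proof is essentially a direct consequence of H\"older's inequality coupled with the scaling structure of $\phi\in\cI_o(0,M)$. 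The only mild care required is the passage to the endpoint exponents $q=1$ and $q=\infty$, for which H\"older is replaced by $\sup_j\phi(2^j)^{-1}<\infty$ or $\sum_j\phi(2^j)^{-1}<\infty$, both of which are again secured by the geometric lower bound on $\phi(2^j)$.
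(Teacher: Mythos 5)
Your proposal is correct and follows essentially the same route as the paper's proof: decompose $f=S_0f+\sum_{j\ge1}\Delta_jf$, apply the triangle (resp.\ pointwise) inequality, then H\"older in the frequency index using the lower bound $\phi(2^j)\gtrsim 2^{j\varepsilon}$ from Proposition~\ref{23.05.25.13.12}.(v). The only cosmetic difference is that the paper inserts an intermediate weight $2^{\delta j}$ before invoking $2^{\varepsilon j}\lesssim\phi(2^j)$, whereas you apply H\"older directly with $\phi(2^j)^{-1}$; the content is identical.
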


\begin{proof}
    One can observe that $f=S_0f+\sum_{j=1}^{\infty}\Delta_jf$.
    By Minkowski's inequality,
    $$
    	\|f\|_{L_p(\bR^d,w\,\mathrm{d}x)}\leq \|S_0f\|_{L_p(\bR^d,w\,\mathrm{d}x)}+\sum_{j=1}^{\infty}\|\Delta_jf\|_{L_p(\bR^d,w\,\mathrm{d}x)}.
    $$
    By \eqref{w scaling}, we have
    \begin{equation}
    \label{23.01.29.17.13}
        2^{\varepsilon j}\lesssim\frac{\phi(2^j)}{\phi(1)},
    \end{equation}
    which clearly implies that  $\sum_{j=1}^{\infty}\phi(2^j)^{-1}\lesssim \sum_{j=1}^{\infty}2^{-\varepsilon j}<\infty$.
    Hence
    $$
    \sum_{j=1}^{\infty}\|\Delta_jf\|_{L_p(\mathbb{R}^d,w\,\mathrm{d}x)}=\sum_{j=1}^{\infty}\phi(2^j)^{-1}\phi(2^j)\|\Delta_jf\|_{L_p(\mathbb{R}^d,w\,\mathrm{d}x)}\lesssim \sup_{j\in\mathbb{N}}\phi(2^j)\|\Delta_jf\|_{L_p(\mathbb{R}^d,w\,\mathrm{d}x)}.
    $$
    Therefore, $\|f\|_{L_p(\bR^d,w\,\mathrm{d}x)}\leq N\|f\|_{B_{p,\infty}^{\phi}(\bR^d,w\,\mathrm{d}x)}$.
    For the case $q \in [1, \infty)$, it directly follows that
    \begin{align*}
    &\sup_{j\in\mathbb{N}}\phi(2^j)\|\Delta_jf\|_{L_p(\mathbb{R}^d,w\,\mathrm{d}x)}\\
    \leq&\max\left(\left(\sum_{j=1}^{\infty}\phi(2^j)^q\|\Delta_jf\|_{L_p(\mathbb{R}^d,w\,\mathrm{d}x)}^q\right)^{1/q},\left\|\left(\sum_{j=1}^{\infty}\phi(2^j)^q|\Delta_jf|^q\right)^{1/q}\right\|_{L_p(\mathbb{R}^d,w\,\mathrm{d}x)}\right).
    \end{align*}
    and therefore,
    $$
    	\|f\|_{L_p(\bR^d,w\,\mathrm{d}x)}\leq N\min(\|f\|_{B_{p,q}^{\phi}(\bR^d,w\,\mathrm{d}x)},\|f\|_{F_{p,q}^{\phi}(\bR^d,w\,\mathrm{d}x)}).
    $$
The lemma is proved.
\end{proof}

\subsection{Equivalent norms for $B_{p,q}^{\phi}(w)$ and $F_{p,q}^{\phi}(w)$}\label{ssec_equiv}

We introduce various norms equivalent to those of $B_{p,q}^{\phi}(\bR^d, w~\mathrm{d}x)$ and $F_{p,q}^{\phi}(\bR^d, w~\mathrm{d}x)$.
Let $\phi\in\cI_o(0,M)$ and $L$ be the smallest integer greater than $M$. 
Then for $p\in(1,\infty]$, $q\in[1,\infty]$, we define the following quantities: For $q\in[1,\infty)$, we define
\begin{align*}
	B_0 &:= \|f\|_{L_p(\bR^d,w\,\mathrm{d}x)}+ \left( \sum_{j\in\bZ} \phi(2^j)^q \|\Delta_j f \|_{L_p(\bR^d,w\,\mathrm{d}x)}^q\right)^{1/q},\\
	B_1 &:= \inf\left(  \|f_0\|_{L_p(\bR^d,w\,\mathrm{d}x)}+ \left( \sum_{j\geq 0} \phi(2^j)^q \| f - f_j \|_{L_p(\bR^d,w\,\mathrm{d}x)}^q\right)^{1/q} \right),\\
	B_2 &:= \|f\|_{L_p(\bR^d,w\,\mathrm{d}x)}+\left(\int_0^\infty \phi(t^{-1})^q \|\psi_{\frac{1}{t}}\ast f \|_{L_p(\bR^d,w\,\mathrm{d}x)}^q ~\frac{\mathrm{d}t}{t}\right)^{1/q},\\
	B_3 &:= \|f\|_{L_p(\bR^d,w\,\mathrm{d}x)}+\left(\int_0^\infty \phi(t^{-1})^q \|\psi_{\frac{1}{t}}^L \ast f \|_{L_p(\bR^d,w\,\mathrm{d}x)}^q ~\frac{\mathrm{d}t}{t}\right)^{1/q},\\
 B_4 &:= \|M_{d/r}^{0,*}f\|_{L_p(\bR^d,w\,\mathrm{d}x)}+\left( \sum_{j\geq 1} \phi(2^j)^q \|M_{j,d/r}^{*}f\|_{L_p(\bR^d,w\,\mathrm{d}x)}^q \right)^{1/q}\quad \forall r\in(0,R_{w,p}),\\
 B_5 &:= \|f\|_{L_p(\bR^d,w\,\mathrm{d}x)}+\left( \sum_{j\in\bZ} \phi(2^j)^q \|M_{j,d/r}^{*}f\|_{L_p(\bR^d,w\,\mathrm{d}x)}^q \right)^{1/q}\quad \forall r\in(0,R_{w,p}).
\end{align*}
For $q = \infty$, we make the natural modifications by replacing 
$\ell_q$ with $\ell_\infty$ and 
$L_q((0,\infty), \frac{\mathrm{d}t}{t})$ with $L_\infty((0,\infty))$.
Similarly, for $p,q \in (1,\infty)$, we define
\begin{align*}
	F_0 &:= \|f\|_{L_p(\bR^d,w\,\mathrm{d}x)}+ \left\| \left(\sum_{j\in\bZ} \phi(2^j)^q |\Delta_j f |^q\right)^{1/q} \right\|_{L_p(\bR^d,w\,\mathrm{d}x)},\\
	F_1 &:= \inf \left( \|f_0\|_{L_p(\bR^d,w\,\mathrm{d}x)}+ \left\| \left( \sum_{j\geq 0} \phi(2^j)^q | f-f_j|^q \right)^{1/q}\right\|_{L_p(\bR^d,w\,\mathrm{d}x)} \right),\\
	F_2 &:= \|f\|_{L_p(\bR^d,w\,\mathrm{d}x)}+\left\| \left(\int_0^\infty \phi(t^{-1})^q |\psi_{\frac{1}{t}}\ast f |^q ~\frac{\mathrm{d}t}{t}\right)^{1/q}\right\|_{L_p(\bR^d,w\,\mathrm{d}x)},\\
	F_3 &:= \|f\|_{L_p(\bR^d,w\,\mathrm{d}x)}+\left\| \left(\int_0^\infty \phi(t^{-1})^q |\psi_{\frac{1}{t}}^L\ast f |^q ~\frac{\mathrm{d}t}{t}\right)^{1/q}\right\|_{L_p(\bR^d,w\,\mathrm{d}x)},\\
	F_4 &:= \|M_{d/r}^{0,*}f\|_{L_p(\bR^d,w\,\mathrm{d}x)}+\left\| \left( \sum_{j\geq 1} \phi(2^j)^q |M_{j,d/r}^{*}f|^q \right)^{1/q} \right\|_{L_p(\bR^d,w\,\mathrm{d}x)}\quad \forall r\in(0,R_{w,p})\\
 F_5 &:= \|f\|_{L_p(\bR^d,w\,\mathrm{d}x)}+ \left\| \left( \sum_{j\in\bZ} \phi(2^j)^q |M_{j,d/r}^{*}f|^q \right)^{1/q} \right\|_{L_p(\bR^d,w\,\mathrm{d}x)}\quad \forall r\in(0,R_{w,p}).
\end{align*}
Note that the infimums in $B_1$ and $F_1$ are taken over all sequences of functions $\{f_j\}_{j=0}^{\infty}\subseteq \cS'(\bR^d)\cap L_p(\bR^d, w\,\mathrm{d}x)$ which satisfy
$$
f=\lim_{j\to\infty}f_j\quad \textrm{in}\quad \cS'(\bR^d),\quad \textrm{and}\quad \supp(\widehat{f_j})\subseteq B_{2^{j+1}}(0).
$$
The function $\psi_t$ denotes $t^d \psi(t x)$ for $\psi$ chosen for the Littlewood-Paley projection operators.
The function $\psi_{1/t}^L$ appeared in $B_3$ and $F_3$ is defined by 
$$
\psi^L_{1/t}(y):=\sum_{j=1}^{L}\binom{L}{j}(-1)^{L-j}\psi_{1/(tj)}(y).
$$
Recall that the operators $M_{d/r}^{0,*}$ and $M_{j,d/r}^{*}$ in $B_4$ and $F_4$ are Peetre's maximal operators defined by
$$
M_{j,\lambda}^{*}f(x):=\sup_{y\in\bR^d}\frac{|\Delta_jf(x-y)|}{(1+2^j|y|)^{\lambda}},\quad M_{\lambda}^{0,*}f(x):=\sup_{y\in\bR^d}\frac{|S_0f(x-y)|}{(1+|y|)^{\lambda}}.
$$
We introduce the following proposition, which will be used in the proof of Theorem~\ref{thm_lp_diff_equiv}:
\begin{prop}
\label{prop_wBesov_equiv}
Let $\phi\in\cI_o(0,M)$ and $L\geq M$ be a natural number. 
\begin{enumerate}
\item (Weighted Besov space) For $p\in(1,\infty]$, $q\in[1,\infty]$, we have
\begin{align*}
	\|f\|_{B_{p,q}^{\phi}(\bR^d,w\,\mathrm{d}x)} \simeq B_i\quad \text{for}\quad i=0,1,2,3,4,5.
\end{align*}
\item (Weighted Triebel-Lizorkin space) For $p,q\in(1,\infty)$, we have
\begin{align*}
	\|f\|_{F_{p,q}^{\phi}(\bR^d,w\,\mathrm{d}x)} \simeq F_i\quad \text{for} \quad i=0,1,2,3,4,5.
\end{align*}
\end{enumerate}
\end{prop}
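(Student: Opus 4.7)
The plan is to establish the chain of equivalences
$$
\|f\|_{B_{p,q}^\phi(\bR^d,w\,\mathrm{d}x)} \simeq B_0 \simeq B_5 \simeq B_4 \simeq B_1 \simeq B_3 \simeq B_2,
$$
and the analogous chain for $F_{p,q}^\phi(\bR^d,w\,\mathrm{d}x) \simeq F_i$, obtained by upgrading each scalar weighted $L_p(w)$-estimate to an $\ell^q$-valued one via the Rubio de Francia extrapolation of Lemma \ref{23.01.31.17.22} applied to $T_j = \cM$.

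First I would reduce the given Besov norm to $B_0$. The only gap is the low-frequency piece: Lemma \ref{Lp_bound} yields $\|f\|_{L_p(w)} \lesssim \|f\|_{B_{p,q}^\phi}$, while for $j \le 0$ Lemma \ref{22.12.28.17.28}(v) gives $|\Delta_j f| \lesssim \cM f$, so \eqref{2306111001} supplies $\|\Delta_j f\|_{L_p(w)} \lesssim \|f\|_{L_p(w)}$. Proposition \ref{23.05.25.13.12}(v) applied to $\phi \in \cI_o(0,M)$ furnishes $\varepsilon > 0$ with $\phi(2^j) \lesssim 2^{j\varepsilon}\phi(1)$ for $j \leq 0$, and the tail then sums geometrically. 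The reverse $B_0 \gtrsim \|f\|_{B_{p,q}^\phi}$ again uses Lemma \ref{Lp_bound} together with $|S_0 f| \lesssim \cM f$.

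Next, for $B_4$ and $B_5$ I would invoke Lemma \ref{22.12.28.17.28}(iii) to dominate $M_{j,d/r}^\ast f \lesssim (\cM|\Delta_j f|^r)^{1/r}$ (and similarly for $M_{d/r}^{0,\ast}$). Since $r \in (0, R_{w,p})$ one has $w \in A_{p/r}$ by \eqref{23.05.29.14.53}, and the weighted maximal inequality then gives $\|M_{j,d/r}^\ast f\|_{L_p(w)} \lesssim \|\Delta_j f\|_{L_p(w)}$; the reverse is immediate by $y=0$. For the Triebel-Lizorkin scale, Lemma \ref{23.01.31.17.22} promotes these scalar bounds to $\ell^q$-valued ones in $L_p(w)$. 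For $B_1$, I would choose $f_j = \sum_{k \le j} \Delta_k f$ as a natural admissible sequence; in the other direction, the spectral localization of $\Delta_j f$ coupled with a Hardy-type inequality driven by the submultiplicative scaling of $s_\phi$ converts $\sum_j \phi(2^j)^q \|f - f_j\|_{L_p(w)}^q$ back into $\sum_k \phi(2^k)^q \|\Delta_k f\|_{L_p(w)}^q$. Finally, for $B_2$ and $B_3$ I would discretize $\int_0^\infty(\cdot)\,\tfrac{\mathrm{d}t}{t} = \sum_{j\in\bZ}\int_{2^j}^{2^{j+1}}(\cdot)\,\tfrac{\mathrm{d}t}{t}$: on each dyadic band $\psi_{1/t}\ast f$ is spectrally concentrated near $|\xi|\simeq 2^j$, hence pointwise comparable to $\Delta_{j-1}f + \Delta_j f + \Delta_{j+1}f$ up to weighted maximal-function comparison, while $\phi(t^{-1}) \simeq \phi(2^{-j})$ there. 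Equivalence with $B_3$ uses that $\widehat{\psi^L_{1/t}}$ vanishes to order $L \geq M$ at the origin, providing precisely the cancellation needed to absorb $\phi \in \cI_o(0,M)$.

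The chief obstacle is the interplay between the continuous-parameter norms $B_2, B_3, F_2, F_3$ and the weighted vector-valued theory: transferring the scalar weighted boundedness of $\cM$ to the $\ell^q$- or $L^q(\mathrm{d}t/t)$-valued setting while respecting the $\phi$-modulation requires pairing Lemma \ref{23.01.31.17.22} with the $o$-condition on $\phi$ rather than a mere $O$-condition, because it is precisely this strict inequality which, via Proposition \ref{23.05.25.13.12}(v), furnishes the positive exponent gap $\varepsilon$ needed to sum the geometric tails uniformly in the parameters.
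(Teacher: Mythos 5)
Your proposal tracks the paper's proof closely: the same ingredients appear (Lemma~\ref{Lp_bound} for $\|f\|_{L_p(w)}$, Lemma~\ref{22.12.28.17.28}(iii)/(v) with the weighted maximal inequality for the $A_{p/r}$ weight, Lemma~\ref{23.01.31.17.22} to lift scalar estimates to $\ell^q$-valued ones, Proposition~\ref{23.05.25.13.12}(v) for the exponent gap $\varepsilon$, and dyadic discretization of the continuous-parameter integrals against the reproducing formula on bands $|\xi|\simeq 2^j$). The only substantive difference is organizational — you arrange the equivalences as a chain $B_0\simeq B_5\simeq B_4\simeq B_1\simeq B_3\simeq B_2$ while the paper proves each $X_i$ directly equivalent to $\|f\|_{X_{p,q}^\phi(w)}$ — and one small imprecision in motivation: for $X_2\simeq X_3$ what actually matters is not that $\widehat{\psi^L_{1/t}}$ vanishes to order $L$ at the origin (any annulus-supported $\widehat\psi$ vanishes to infinite order there) but rather the reproducing identity $\sum_{k}\widehat{\psi}_t^L(2^k\xi)=(-1)^{L+1}$ together with $\supp(\widehat\psi_t)\subset\supp(\widehat\psi_t^L)$, which lets one write $\psi_{1/t}\ast f$ as $\psi_{1/t}$ convolved against a few $\psi^L$-pieces at comparable scales and then apply $\cM$; the order-$L$ cancellation of $\psi^L$ is instead what drives the later comparison with difference operators $\cD_h^L$ in Lemma~\ref{23.02.08.15.58}, not the $X_2\simeq X_3$ step.
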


The proof of Proposition~\ref{prop_wBesov_equiv} is given by showing the following relations:
Let $X=B$ or $F$.
Then in Section~\ref{sec_prop_equiv}, we show 
\begin{eqnarray}
	\|f\|_{X_{p,q}^\phi(\bR^d, w~\mathrm{d}x)} 
	\left\{
	\begin{array}{llll}
	\underset{\text{Lemma~\ref{23.02.08.15.47}}}{\simeq} X_0,\\
	\underset{\text{Lemma~\ref{23.02.08.15.52}}}{\simeq} X_1,\\
	\underset{\text{Lemma~\ref{23.02.08.15.54}}}{\simeq} X_2 \underset{\text{Lemma~\ref{23.02.08.15.55}}}{\simeq} X_3,\\
	\underset{\text{Lemma~\ref{23.02.09.14.43}}}{\simeq} X_4, X_5
	\end{array}
	\right.
\end{eqnarray}

In the proof of Theorem \ref{thm_lp_diff_equiv}, we take Proposition \ref{prop_wBesov_equiv} for granted.
In particular, the characterizations of $X$ by $X_3$ and $X_5$ play an essential role in the proof.
For the left of paper, we simply write $X_{p,q}^\phi(w)$ and $L_p(w)$ to denote $X_{p,q}^\phi(\bR^d, w~\mathrm{d}x)$ and $L_p(\bR^d, w~\mathrm{d}x)$, if there is no confusion.
Moreover, we always assume for $X_{p,q}^\phi(w)$ that $p\in(1,\infty],q\in[1,\infty]$ if $X=B$, and $p,q\in(1,\infty)$ if $X=F$.

\mysection{Proof of  Theorem~\ref{thm_lp_diff_equiv}}\label{sec_mainproof}

To begin the proof of Theorem~\ref{thm_lp_diff_equiv},
we introduce quantities $\cB_0$ and $\cF_0$ related to norms of $\cB_{p,q}^\phi(w)$ and $\cF_{p,q}^\phi(w)$, respectively:
\begin{eqnarray*}
    \cB_0:=\|f\|_{L_p(w)}
    +\left\{\begin{array}{ll} \left(\int_{|h|\leq 1} \phi(|h|^{-1})^q  \sup_{|\rho|\leq|h|}\left\| \cD_{\rho}^L f \right\|_{L_p(w)}^q~\frac{\mathrm{d}h}{|h|^d}\right)^{1/q}, &q<\infty,\\
    \sup_{|h|\leq 1}\phi(|h|^{-1}) \sup_{|\rho|\leq|h|}\left\| \cD_{\rho}^L f \right\|_{L_p(w)},&q=\infty.
    \end{array}
    \right.
\end{eqnarray*}
\begin{eqnarray*}
    \cF_0:=\|f\|_{L_p(w)}
    + \left\| \left(\int_{|h|\leq 1} \phi(|h|^{-1})^q\sup_{|\rho|\leq|h|}   |\cD_{\rho}^L (f)(x)|~\frac{\mathrm{d}h}{|h|^d}\right)^{1/q} \right\|_{L_p(w)}^q,
\end{eqnarray*}
where $w\in A_p(\bR^d)$, $\phi\in\cI_o(d/{R_{w,p}}+d/{p\Gamma_w}-d/{p},M)$ for $\cB_0$, $\phi\in\cI_o(d/R_{w,p},M)$ for $\cF_0$ and $L$ is the smallest integer not less than $M$.
We also define quantities $\cB_1$ and $\cF_1$ with $\bR^d$ instead of $|h|\leq 1$, which is related to $\mathscr{B}_{p,q}^\phi(w)$ and $\mathscr{F}_{p,q}^\phi(w)$, respectively.
Then, due to Proposition~\ref{prop_wBesov_equiv}, our strategy is to show that for $\cX = \cB$ or $\cF$
\begin{align}\label{ineq_main}
	X_3 
    \lesssim  \|f\|_{\cX_{p,q}^{\phi}(w)}
    \lesssim\cX_0  
    \leq \cX_1
    \lesssim X_5.
\end{align}
Note that the second and the third inequalities of \eqref{ineq_main} follow from the definitions of $\cB_0$, $\cF_0$, $\cB_1$ and $\cF_1$.
Also note that \eqref{ineq_main} yields the following inequalities:
$$
    \|f\|_{\cX_{p,q}^{\phi}(w)} 
    \leq \|f\|_{\mathscr{X}_{p,q}^{\phi}(w)}
    \leq \mathcal{X}_1
    \lesssim X_5,\quad \text{$\mathscr{X}=\mathscr{B}$ or $\mathscr{F}$}.
$$
Thus it suffices to verify the first and the last inequalities of \eqref{ineq_main}, which are Lemmas~\ref{23.02.08.15.58} and \ref{23.02.09.14.46}.
\begin{lem}
\label{23.02.08.15.58}
$X_3 \lesssim \|f\|_{\cX_{p,q}^{\phi}(w)}$.
That is, $\cX_{p,q}^{\phi}(w)$ is continuously embedded into the space equipped with the norm $X_3$.
\end{lem}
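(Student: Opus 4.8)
The plan is to represent $\psi^L_{1/t}\ast f$ as a weighted average of the iterated differences $\cD^L_\rho f$ and then feed this into the quantity $X_3$; since $X_3$ contains the term $\|f\|_{L_p(w)}$ and $\|\cdot\|_{\cX^\phi_{p,q}(w)}\ge\|\cdot\|_{L_p(w)}$, the task reduces to bounding $\bigl(\int_0^\infty\phi(t^{-1})^q\|\psi^L_{1/t}\ast f\|_{L_p(w)}^q\frac{\mathrm{d}t}{t}\bigr)^{1/q}$ (with the usual modification for $q=\infty$, and with $L_p(w)$ moved inside for $X=F$) by $\|f\|_{\cX^\phi_{p,q}(w)}$. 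We may assume this last quantity is finite, so $f\in L_p(w)\subset L_{1,\mathrm{loc}}(\bR^d)$. The starting point is the identity
\[
\psi^L_{1/t}\ast f(x)=\int_{\bR^d}\psi_{1/t}(z)\,\cD^L_{-z}f(x)\,\mathrm{d}z ,
\]
which I would obtain by changing variables in each $\psi_{1/(tj)}\ast f$, summing the binomial coefficients to recognise $\sum_{j=0}^{L}\binom{L}{j}(-1)^{L-j}f(x-tjw)=\cD^L_{-tw}f(x)$, and discarding the $j=0$ term using $\int_{\bR^d}\psi=\widehat{\psi}(0)=\widehat{\varphi}(0)-\widehat{\varphi}(0)=0$. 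A crude consequence, used in the ``far'' regimes, is that, since $\cD^L_{-z}f$ is a combination of $L+1$ translates of $f$, one has $|\psi^L_{1/t}\ast f(x)|\lesssim_{d,L}\cM f(x)$ uniformly in $t$, by dominating each $|\psi_{1/(tk)}|$ by a radially decreasing $L_1$ kernel.

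For $t\ge 1$ this crude bound together with the weighted maximal inequality \eqref{2306111001} already gives $\|\psi^L_{1/t}\ast f\|_{L_p(w)}\lesssim\|f\|_{L_p(w)}$, hence
\[
\int_1^\infty\phi(t^{-1})^q\|\psi^L_{1/t}\ast f\|_{L_p(w)}^q\frac{\mathrm{d}t}{t}\;\lesssim\;\|f\|_{L_p(w)}^q\int_0^1\phi(u)^q\frac{\mathrm{d}u}{u}\;\lesssim\;\|f\|_{L_p(w)}^q ,
\]
the remaining integral being finite because $\phi\in\cI_o(0,M)$ forces $\phi(u)\lesssim\phi(1)u^{\varepsilon}$ for $u\le 1$ (Proposition~\ref{23.05.25.13.12}). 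I expect this to be the main conceptual point of the lemma: there is no pointwise translation estimate for a general $A_p$ weight, so one cannot hope to bound $\|\cD^L_zf\|_{L_p(w)}$ by $(1+|z|)^{c}\|f\|_{L_p(w)}$; instead, the contributions of large $t$ (and, below, of large $|z|$) must be absorbed into $\cM f$ and killed by the convergence/decay built into the class $\cI_o(0,M)$ — essentially where the lower index of $\phi$ enters.

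For $t\le 1$ I would split the identity according to $|z|\le 1$ and $|z|>1$, $\psi^L_{1/t}\ast f=I(t,\cdot)+II(t,\cdot)$. On $\{|z|>1,\ t\le1\}$ one has $|\psi_{1/t}(z)|\lesssim_N t^{N-d}|z|^{-N}$, and expanding $\cD^L_{-z}f$ into translates and substituting $y=kz$ gives $|II(t,x)|\lesssim_N t^{N-d}\cM f(x)$; since $\phi(t^{-1})\lesssim\phi(1)t^{-M}$ on $(0,1]$, choosing $N>M+d$ makes $\int_0^1\phi(t^{-1})^q\|II(t,\cdot)\|_{L_p(w)}^q\frac{\mathrm{d}t}{t}\lesssim\|f\|_{L_p(w)}^q$. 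For the main term $I$, Hölder's inequality in $z$ against the $L_1$–mass of $\psi_{1/t}$ (uniformly $\le\|\psi\|_{L_1}$) restores the $q$-th power, $\|I(t,\cdot)\|_{L_p(w)}^q\lesssim\int_{|z|\le1}|\psi_{1/t}(z)|\,\|\cD^L_{-z}f\|_{L_p(w)}^q\,\mathrm{d}z$, and after Fubini the $t$-integral becomes $\int_0^1\phi(t^{-1})^q\,t^{-d}(1+|z|/t)^{-N}\frac{\mathrm{d}t}{t}$; substituting $t=|z|s$ and using $s_\phi(\lambda)\lesssim\lambda^{M}$ for $\lambda\ge1$ and $s_\phi(\lambda)\lesssim1$ for $\lambda\le1$ (Proposition~\ref{23.05.25.13.12} together with submultiplicativity of $s_\phi$), this is $\lesssim\phi(|z|^{-1})^q|z|^{-d}$ for $|z|\le1$ as soon as $N>Mq+d$. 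A change of variables $z\mapsto-z$ — the measure $\phi(|z|^{-1})^q|z|^{-d}\,\mathrm{d}z$ being radial — then identifies the resulting bound with $\|f\|_{\mathring{\cB}^\phi_{p,q}(w)}^q\le\|f\|_{\cB^\phi_{p,q}(w)}^q$. For $X=F$ the identical computation is performed pointwise in $x$ and then integrated in $L_p(w)$, producing $\|f\|_{\mathring{\cF}^\phi_{p,q}(w)}$. The case $q=\infty$ (relevant only for $\cB$) is handled by a small variant: write $\|\cD^L_{-z}f\|_{L_p(w)}\le\phi(|z|^{-1})^{-1}\|f\|_{\mathring{\cB}^\phi_{p,\infty}(w)}$ for $|z|\le1$ and check $\sup_{0<t\le1}\int_{|z|\le1}\phi(t^{-1})|\psi_{1/t}(z)|\phi(|z|^{-1})^{-1}\,\mathrm{d}z\lesssim\int_{\bR^d}(1+|w|^{M})|\psi(w)|\,\mathrm{d}w<\infty$ via the two-sided bounds of Proposition~\ref{23.05.25.13.12}(v). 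Collecting the contributions of $t\ge1$, of $II$, and of $I$, I would conclude $X_3\lesssim\|f\|_{L_p(w)}+\|f\|_{\mathring{\cX}^\phi_{p,q}(w)}\simeq\|f\|_{\cX^\phi_{p,q}(w)}$, which is the assertion; the only genuinely delicate steps are the kernel identity (and the observation that only $\widehat{\psi}(0)=0$ is needed for it) and the uniform domination $|\psi^L_{1/t}\ast f|\lesssim\cM f$ that makes the $|z|>1$ and $t\ge1$ regimes harmless.
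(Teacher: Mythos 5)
Your proposal is correct and follows essentially the same route as the paper: the same kernel identity $\psi^L_{1/t}\ast f(x)=\int\psi_{1/t}(z)\,\cD^L_{-z}f(x)\,\mathrm{d}z$ (resting on $\widehat{\psi}(0)=0$), the same splitting into the regimes $t\ge 1$, $\{t\le 1,|z|>1\}$ (both absorbed into $\cM f$ and killed by the decay of $\phi$ from $\cI_o(0,M)$), and the same Hölder--Fubini treatment of the near regime culminating in the bound $\int_0^1\phi(t^{-1})^q|\psi_{1/t}(z)|\frac{\mathrm{d}t}{t}\lesssim\phi(|z|^{-1})^q|z|^{-d}$. The $q=\infty$ variant and the pointwise-in-$x$ adaptation for $X=F$ also match the paper's treatment.
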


We remark that Lemma \ref{23.02.08.15.58} holds for all $\phi\in\cI_o(0,M)$. Therefore, we prove Lemma \ref{23.02.08.15.58} with the assumption $\phi\in\cI_o(0,M)$.

\begin{lem}
\label{23.02.09.14.46}
$ \cX_1 \lesssim X_5$.
That is, the space equipped with the norm $X_5$ is continuously embedded into the space equipped with the norm $\cX_0$.
\end{lem}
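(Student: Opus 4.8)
The plan is to bound $\cX_1$ — the quantity built from $\sup_{|\rho|\le|h|}\|\cD_\rho^L f\|_{L_p(w)}$ (resp. its Triebel-Lizorkin analogue) integrated over all $h\in\bR^d$ against $\frac{\mathrm dh}{|h|^d}$ — by $X_5$, the Peetre maximal function characterization. The starting point is Lemma~\ref{22.12.28.17.28}.$(iv)$, which gives the pointwise estimate
$$
|\cD_\rho^L\Delta_jf(x)|\lesssim \min(1,|\rho|2^j)^L(1+|\rho|2^j)^{\lambda}M_{j,\lambda}^*f(x),\qquad
|\cD_\rho^LS_0f(x)|\lesssim \min(1,|\rho|)^L(1+|\rho|)^{\lambda}M_{\lambda}^{0,*}f(x),
$$
with $\lambda=d/r$ for a fixed $r\in(1,R_{w,p})$. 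First I would write $f=S_0f+\sum_{j\ge1}\Delta_jf$ and, using that $\cD_\rho^L$ is linear, estimate $\sup_{|\rho|\le|h|}|\cD_\rho^L f(x)|$ by $\sup_{|\rho|\le|h|}|\cD_\rho^L S_0f(x)|+\sum_{j\ge1}\sup_{|\rho|\le|h|}|\cD_\rho^L\Delta_jf(x)|$, then plug in the bounds above with $|\rho|\le|h|$. This produces the weight factor $\min(1,|h|2^j)^L(1+|h|2^j)^{d/r}$ multiplying $M_{j,d/r}^*f(x)$, and similarly for the $S_0$ term.

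Next, for the Besov case ($X=B$), I would take $L_p(w)$ norms, apply the triangle inequality in $\ell^q(\text{in }j)$ after multiplying by $\phi(|h|^{-1})$, and integrate in $h$ over $\bR^d$ against $\frac{\mathrm dh}{|h|^d}$. The key is the scalar convolution-type estimate: for each fixed $j$, switching to polar-type coordinates and substituting $t=|h|2^j$,
$$
\Big(\int_{\bR^d}\phi(|h|^{-1})^q\,\min(1,|h|2^j)^{qL}(1+|h|2^j)^{qd/r}\,\frac{\mathrm dh}{|h|^d}\Big)^{1/q}\lesssim \phi(2^j),
$$
which holds precisely because $\phi\in\cI_o(\tfrac{d}{p}(R_w+\tfrac1{\Gamma_w}-1),M)=\cI_o(\tfrac{d}{R_{w,p}}+\tfrac{d}{p\Gamma_w}-\tfrac dp,M)$ with $L\ge M$: near $t=0$ the factor $t^L$ against $\phi(t^{-1}2^j)\sim 2^{-jb}t^{-b}$ with $b<M\le L$ gives convergence, and near $t=\infty$ the factor $t^{d/r}$ against the decay $\phi\in\cI_o(a,\cdot)$ with $a$ strictly larger than $d/r-d$... here one needs $r$ close enough to $R_{w,p}$ and $s$ close enough to $\Gamma_w$ so that $d/r+d/(ps)-d/p$ stays below the lower index $a$ of $\phi$; this is exactly why the hypothesis on $\phi$ in Theorem~\ref{thm_lp_diff_equiv} takes the stated form. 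After Minkowski's integral inequality to move the $h$-integration inside the $L_p(w)$ and $\ell^q$ norms, one is left with $\sum_j\phi(2^j)^q\|M_{j,d/r}^*f\|_{L_p(w)}^q$ plus the $S_0$-term $\|M_{d/r}^{0,*}f\|_{L_p(w)}^q$, which is (comparable to) $X_5$.

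For the Triebel-Lizorkin case ($X=F$) the same pointwise bounds feed in, but now the $h$-integral and the $\ell^q(j)$-sum live inside a single $L_p(w)$-norm; I would apply Minkowski's inequality in $L_p(w)$ and in $\ell^q$ (here using $q>1$), then the same scalar estimate on the $h$-integral produces $\big\|(\sum_j\phi(2^j)^q|M_{j,d/r}^*f|^q)^{1/q}\big\|_{L_p(w)}$, i.e. $X_5$. The main obstacle is the bookkeeping around the two indices $r$ and $s$: one must verify that the decay index $d/r+d/(ps)-d/p$ coming from Proposition~\ref{230610919}/Lemma~\ref{lem_wlog_besov} (for the $S_0$-term, which has no spectral localization and must be handled via the weighted boundedness of shifted maximal functions rather than $M_{j,\lambda}^*$) can be made strictly smaller than the lower index $a=\tfrac{d}{p}(R_w+\tfrac1{\Gamma_w}-1)$ of $\phi$, which follows from $\phi\in\cI_o$ (strict inequality in the $o$-notation, cf.\ Proposition~\ref{23.05.25.13.12}.$(v)$) by choosing $r<R_{w,p}$ and $s<\Gamma_w$ sufficiently close to the endpoints. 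Once the index arithmetic is arranged, the rest is the scalar integral estimate above plus Lemma~\ref{22.12.28.17.28} and standard applications of Minkowski's inequality.
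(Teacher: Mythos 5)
Your Triebel--Lizorkin argument matches the paper's: there one indeed starts from the pointwise Peetre bound of Lemma~\ref{22.12.28.17.28}.$(iv)$, which produces the factor $(1+|h|2^j)^{d/r}$ with $r<R_{w,p}$, and this is exactly why the $F$-case of Theorem~\ref{thm_lp_diff_equiv} carries the stronger hypothesis $\phi\in\cI_o(d/R_{w,p},M)$. The genuine gap is in your Besov case. There the hypothesis is only $\phi\in\cI_o\big(\tfrac{d}{R_{w,p}}+\tfrac{d}{p\Gamma_w}-\tfrac{d}{p},M\big)$, and since $\tfrac{d}{p\Gamma_w}-\tfrac{d}{p}\le 0$ the lower index $a$ of $\phi$ is in general \emph{strictly below} $d/R_{w,p}<d/r$. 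Consequently your scalar estimate
\[
\Big(\int_{\bR^d}\phi(|h|^{-1})^q\min(1,|h|2^j)^{qL}(1+|h|2^j)^{qd/r}\tfrac{\mathrm{d}h}{|h|^d}\Big)^{1/q}\lesssim\phi(2^j)
\]
fails: after substituting $t=|h|2^j$ the tail $\int_1^\infty t^{q(d/r-a-\varepsilon)}\tfrac{\mathrm{d}t}{t}$ diverges because $d/r>a+\varepsilon$. The route that works (and is the one the paper takes) is to estimate $\|\cD_\rho^L\Delta_jf\|_{L_p(w)}$ \emph{at the level of norms} via Lemma~\ref{lem_wlog_besov}, i.e.\ via the weighted bounds for shifted maximal functions of Proposition~\ref{230610919}, which exploit the reverse H\"older property and replace the exponent $d/r$ by $\tfrac{d}{r}+\tfrac{d}{ps}-\tfrac{d}{p}$; choosing $r$ near $R_{w,p}$ and $s$ near $\Gamma_w$ pushes this below $a+\varepsilon$. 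You do quote this exponent and cite Lemma~\ref{lem_wlog_besov}, but you attribute it only to the $S_0$-term (which, incidentally, \emph{is} spectrally localized in $B_2(0)$ and is covered by $M_\lambda^{0,*}$); in fact the shifted-maximal-function estimate must be used for every $\Delta_jf$, $j\in\bZ$, in the Besov case, while the Peetre route is reserved for the Triebel--Lizorkin case where the supremum over $\rho$ must be taken pointwise inside the $L_p(w)$-norm.

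A secondary, fixable issue: "for each fixed $j$ do the $h$-integral, then apply Minkowski'' only yields $\sum_j\phi(2^j)a_j$, an $\ell^1$-type bound, not the required $\ell^q$ bound $\big(\sum_j\phi(2^j)^qa_j^q\big)^{1/q}$. One must keep exponential decay in $|j-k|$ (where $2^{-k}\sim|h|$) and run a discrete Young/H\"older argument; the paper does this by first discretizing the $h$-integral into dyadic shells and then applying H\"older with an auxiliary factor $2^{(j-k)\delta}$ together with Fubini, splitting into the regimes $j\le k$ and $j\ge k+1$.
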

Theorem~\ref{thm_lp_diff_equiv} follows from Lemmas~\ref{23.02.08.15.58}, \ref{23.02.09.14.46} and the equivalences of Proposition~\ref{prop_wBesov_equiv}.

\subsection{Proof of Lemma~\ref{23.02.08.15.58}}

By similarity, we only prove for $X=B$ and $\cX=\cB$.
Recall that 
$$
	B_3 = \|f\|_{L_p(w)} + \left(\int_0^\infty \phi(t^{-1})^q \left\| \psi_{\frac{1}{t}}^{L}\ast f \right\|_{L_p(w)}^q ~\frac{\mathrm{d}t}{t}\right)^{1/q}.
$$
Since $\cX_0$ already contains $\|f\|_{L_p(w)}$, it suffices to show that
\begin{align}
	\int_0^\infty \phi(t^{-1})^q \left\| \psi_{\frac{1}{t}}^{L}\ast f \right\|_{L_p(w)}^q ~\frac{\mathrm{d}t}{t}
	\lesssim \cB_0.
\end{align}
We consider $\int_0^1(\cdots)$, $\int_1^\infty (\cdots)$ and $q\in [1,\infty)$, $q=\infty$ separately.

\vspace{3mm}

\textbf{Case 1.} Estimation of $\int_1^\infty(\cdots)$ when $q\in [1,\infty)$.

For $\int_1^\infty(\cdots)$ and $q\in [1,\infty)$, we want to show 
\begin{align}\label{ineq_230315_1912}
	 \left|\psi_t^L\ast f(x)\right| \lesssim \cM f(x).
\end{align}
Once we assume \eqref{ineq_230315_1912}, then we have
\begin{align}\label{ineq_lower_1}
	\int_{1}^{\infty}\phi(t^{-1})^q\left\|\psi_{1/t}^L\ast f\right\|_{L_p(w)}^q\frac{\mathrm{d}t}{t}
	\lesssim \|f\|_{L_p(\bR^d,w\,\mathrm{d}x)}^q,
\end{align}
Indeed, the inequality \eqref{ineq_lower_1} is valid since $\phi \in \cI_o(0,M)$ and we have
$$
	\int_1^\infty \phi(t^{-1})^q~\frac{\mathrm{d}t}{t} <\infty.
$$
Thus it is left to us obtain \eqref{ineq_230315_1912}.
Observe that $\psi^L_1\in\cS(\bR^d)$ and therefore for any $N\in\bN$, 
$$
\psi^L_{t}(y)=t^d\psi^L_1(ty)\lesssim_{\psi,d,N} \frac{t^d}{(1+|ty|)^{N}}\lesssim_{d,N}\sum_{k=0}^\infty2^{-kN}t^d\bI_{B(0,2^k/t)}(y)\,.
$$
This implies that
\begin{alignat*}{2}
|\psi_t^L\ast f|&\leq \int_{\bR^d}|\psi_t^L (y)||f(x-y)|\dd y\\
 &\lesssim_{d,\psi}\sum_{k=0}^{\infty}2^{-k(d+1)}t^d\int_{\{y:|y|<2^k/t\}}f(x-y)\,\mathrm{d}y\lesssim_d \Big(\sum_{k=0}^\infty2^{-k}\Big)\cM f(x)\,, 
\end{alignat*}
which is \eqref{ineq_230315_1912}.

\vspace{3mm}

\textbf{Case 2.} Estimation of $\int_0^1(\cdots)$ when $q\in[1,\infty)$.

Now we prove
\begin{align}\label{230611958}
    \int_{0}^{1}\phi(t^{-1})^q\left\|\psi_{1/t}^L\ast f\right\|_{L_p(w)}^q\frac{\mathrm{d}t}{t}\lesssim \int_{|y|\leq1}\phi(|y|^{-1})^q\|\cD_y^Lf\|_{L_p(w)}^q~\frac{\mathrm{d}y}{|y|^d}+\|f\|_{L_p(w)}.
\end{align}
Observe that
$$
|\psi_{t}^L\ast f(x)|=\left|\int_{\bR^d}\psi_{t}(y)\cD_y^Lf(x)\dd y\right|\leq \int_{\{y:|y|\leq 1\}}\big|\cdots\big|\dd y+ \int_{\{y:|y|>1\}}\big|\cdots\big|\dd y=:I_{t,1}(x)+I_{t,2}(x)\,.
$$

Due to the Minkowski inequality and H\"older inequality, we have
\begin{align}\label{2306110914}
\begin{split}
\int_{0}^{1}\phi(t^{-1})^q\left\|I_{1/t,1}\right\|_{L_p(w)}^q\frac{\mathrm{d}t}{t}\leq & \int_{0}^{1}\phi(t^{-1})^q\bigg(\int_{\{y:|y|\leq 1\}}|\psi_{1/t}(y)|\big\|\cD_y^Lf\big\|_{L_p(w)}\dd y\bigg)^q\frac{\mathrm{d}t}{t}\\
\leq &\int_{0}^{1}\phi(t^{-1})^q\int_{\{y:|y|\leq 1\}}|\psi_{1/t}(y)|\big\|\cD_y^Lf\big\|_{L_p(w)}^q\dd y\frac{\mathrm{d}t}{t},
\end{split}
\end{align}
where the last inequality is implied by that $\int_{\{y:|y|\leq 1\}}|\psi_{1/t}(y)|\dd y\leq \int_{\bR^d}|\psi(y)|\dd y<\infty$.
Due to Proposition~\ref{23.05.25.13.12}-(v), we have
\begin{align*}
\int_0^1\phi(t^{-1})^q|\psi_{1/t}\left( |y| \right)|\frac{\mathrm{d}t}{t}\,
&=\phi(|y|^{-1})^q|y|^{-d}\int_0^1\left(\frac{\phi(t^{-1})}{\phi(|y|^{-1})}\right)^q \left|\left(\frac{|y|}{t}\right)^{-d}\psi\left(\frac{|y|}{t}\right) \right| \frac{\mathrm{d}t}{t}\\
&\lesssim \phi(|y|^{-1})^q|y|^{-d}\int_0^1 \left\{ \left(\frac{|y|}{t}\right)^{\varepsilon}+\left(\frac{|y|}{t}\right)^L\right\}^q \left(\frac{|y|}{t}\right)^{-d}\left|\psi \left(\frac{|y|}{t}\right)\right|\frac{\mathrm{d}t}{t}\\
&\leq \phi(|y|^{-1})^q|y|^{-d}\int_0^\infty\Big(s^{\varepsilon}+s^L\Big)^qs^d|\psi(s)|\frac{\mathrm{d}s}{s},
\end{align*}
and the integral in the last term is finite.
Consequently, we obtain that
\begin{align}\label{230611957}
\int_{0}^{1}\phi(t^{-1})^q\left\|I_{1/t,1}\right\|_{L_p(w)}^q\frac{\mathrm{d}t}{t}\lesssim \int_{|y|\leq1}\phi(|y|^{-1})^q\|\cD_y^Lf\|_{L_p(w)}^q~\frac{\mathrm{d}y}{|y|^d}.
\end{align}

Since $\psi\in\cS(\bR^d)$, $|\psi(y)|\lesssim |y|^{-(L+d+1)}$, where $L\geq M$ is a natural number and $M$ is the constant in the assumption for $\phi\in \cI_o(d/R_{w,p}+d/(p\Gamma_w)-d/p,M)$.
Therefore for $t\in(0,1)$, we have
\begin{align*}
|I_{1/t,2}(x)|\,&\lesssim t^{-d}\int_{\{y:|y|>1\}}(|y|/t)^{-(L+d+1)}\big|\cD_y^Lf(x)\big|\dd y\\
&\leq \sum_{k:2^kt>1/2} t^{-d}2^{-k(L+d+1)}\int_{\{y:2^kt\leq |y|<2^{k+1}t\}}|\cD_y^Lf(x)|\dd y\\
&\lesssim \sum_{k:2^kt>1/2}2^{-k(L+1)}\cM f(x)\\
&\lesssim t^{L+1}\cM f(x)\,.
\end{align*}
Since $\phi(t^{-1})\lesssim \phi(1) t^{-L}$ for $t<1$ (see Proposition~\ref{23.05.25.13.12}.(v)), 
we have
\begin{align}\label{230611956}
\int_{0}^{1}\phi(t^{-1})^q\left\|I_{1/t,1}\right\|_{L_p(w)}^q\frac{\mathrm{d}t}{t}\lesssim \int_{0}^{1}\phi(t^{-1})^qt^{(L+1)q}\frac{\mathrm{d}t}{t}\|\cM f\|_{L_p(w)}^q\lesssim\|\cM f\|_{L_p(w)}^q\,. 
\end{align}

Consequently \eqref{230611958} follows from \eqref{230611957}, \eqref{230611956}, and \eqref{2306111001}.

\vspace{3mm}

\textbf{Case 3.} Estimations when $q=\infty$.

For $q=\infty$, if we use
$$
\frac{\phi(t^{-1})}{\phi(t^{-1}|y|^{-1})}\lesssim1_{|y|\leq 1}|y|^{\varepsilon}+1_{|y|>1}|y|^{M-\varepsilon},
$$
then by the change of variable formula,
\begin{align*}
    \phi(t^{-1})\left\|\psi_{1/t}^L\ast f\right\|_{L_p(w)}
    &\leq N\phi(t^{-1})\int_{|y|\leq 1/t}|\psi(|y|)|\|\cD_{ty}^Lf\|_{L_p(w)}\mathrm{d}y+N\|f\|_{L_p(w)}\\
    &=N\int_{|y|\leq 1/t}\phi(t^{-1})\psi(|y|)\|\cD_{ty}^Lf\|_{L_p(w)}\mathrm{d}y+N\|f\|_{L_p(w)}\\
    &\leq N\sup_{|y|\leq1}\phi(|y|^{-1})\|\cD_{y}^Lf\|_{L_p(\bR^d,w\,\mathrm{d}x)}+N\|f\|_{L_p(w)}.
\end{align*}
This certainly implies that
$$
	X_3 \lesssim \cX_0, \quad\text{when $q=\infty$}.
$$
The lemma is proved.

\subsection{Proof of Lemma~\ref{23.02.09.14.46}}
In this subsection, we prove Lemma \ref{23.02.09.14.46}.
To prove Lemma \ref{23.02.09.14.46}, it suffices to prove that
\begin{equation}\label{2307101154}
\begin{aligned}
\int_{\bR^d}\phi(|h|^{-1})^q\sup_{|\rho|\leq|h|}\|\cD^L_{\rho}f\|_{L_p(w)}^q\frac{\mathrm{d}h}{|h|^d}
&\lesssim \sum_{j\in\bZ}\phi(2^j)^q\|\Delta_jf\|_{L_p(w)}^q,\\
\left\|\left(\int_{\bR^d}\phi(|h|^{-1})^q\sup_{|\rho|\leq|h|}|\cD^L_{\rho}f|^q\frac{\mathrm{d}h}{|h|^d}\right)^{1/q}\right\|_{L_p(w)}
&\lesssim \left\|\left(\sum_{j\in\bZ}\phi(2^j)^q|M_{j,d/r}^{*}f|^q\right)^{1/q}\right\|_{L_p(w)}.
\end{aligned}
\end{equation}
We divide the proof into two parts.

\subsubsection{Case of $(X,\cX)=(B,\cB)$}

We first prove for $X=B$ and $\cX = \cB$.
One can observe that for $q\in[1,\infty)$
\begin{equation}\label{ineq_230315_2115}
\begin{aligned}
    \int_{\bR^d}\phi(|h|^{-1})^q\sup_{|\rho|\leq|h|}\|\cD_{\rho}^Lf\|_{L_p(w)}^q\frac{\mathrm{d}h}{|h|^d}&\lesssim \int_0^{\infty}\phi(\lambda^{-1})^q\sup_{|\rho|\leq \lambda}\|\cD_{\rho}^Lf\|_{L_p(w)}^q\frac{\mathrm{d}\lambda}{\lambda}\\
    &\lesssim \sum_{k\in\bZ}\phi(2^k)^q\sup_{|\rho|\leq 2^{-k}}\|\cD_{\rho}^Lf\|_{L_p(w)}^q,
\end{aligned}
\end{equation}
and for $q=\infty$
\begin{equation}\label{ineq_230315_2116}
\begin{aligned}
    \sup_{h\in\bR^d}\phi(|h|^{-1})\sup_{|\rho|\leq|h|}\|\cD_{\rho}^Lf\|_{L_p(w)}
    &=\sup_{\lambda\in(0,\infty)}\phi(\lambda^{-1})\sup_{|\rho|\leq\lambda}\|\cD_{\rho}^Lf\|_{L_p(w)}\\
    &\leq N\sup_{k\in\bZ}\phi(2^{k})\sup_{|\rho|\leq 2^{-k}}\|\cD_{\rho}^Lf\|_{L_p(w)}.
\end{aligned}
\end{equation}
Note that we can write $f=\sum_{j\in\bZ}\Delta_jf$, hence by Lemma~\ref{lem_wlog_besov} we have
\begin{equation}\label{ineq_230322_1605}
\begin{aligned}
	\sup_{|\rho|\leq 2^{-k}}\|\cD^L_{\rho}f\|_{L_p(w)}&\leq \sum_{j\in\bZ}\sup_{|\rho|\leq 2^{-k}}\|\cD_{\rho}^L\Delta_jf\|_{L_p(w)}\\
	&\lesssim\sum_{j\in\bZ} \min( 2^{(j-k)L}, 1)(1+ 2^{j-k})^{\frac{d}{r} + \frac{d}{ps}-\frac{d}{p}} \|\Delta_jf\|_{L_p(w)},
\end{aligned}
\end{equation}
where $r\in[1,R_{w,p})$ and $s\in(1,\Gamma_w)$.
We choose $r$ and $s$ later.

Hence by \eqref{ineq_230315_2115}, \eqref{ineq_230315_2116}, and \eqref{ineq_230322_1605}, we have
\begin{align*}
    \int_{\bR^d}\phi(|h|^{-1})^q\sup_{|\rho|\leq|h|}\|\cD_{\rho}^Lf\|_{L_p(w)}^q\frac{\mathrm{d}h}{|h|^d}
    &\lesssim \sum_{k\in\bZ}\phi(2^k)^q\Big(\sum_{j\leq k}^{}  2^{(j-k)L} \|\Delta_jf\|_{L_p(w)}\Big)^q\\
    &\quad+\sum_{k\in\bZ}\phi(2^k)^q\Big(\sum_{j\geq k+1} 2^{(j-k)(\frac{d}{r} + \frac{d}{ps}-\frac{d}{p})} \|\Delta_jf\|_{L_p(w)}\Big)^q\\
    &=:J_{1,q}+J_{2,q},\quad \forall q\in[1,\infty),
\end{align*}
and
\begin{align*}
    \sup_{h\in\bR^d}\phi(|h|^{-1})\sup_{|\rho|\leq|h|}\|\cD_{\rho}^Lf\|_{L_p(w)}
    &\leq \sup_{k\in\bZ}\phi(2^k)\sum_{j=1}^{k} 2^{(j-k)L} \|\Delta_jf\|_{L_p(w)}\\
    &\quad+\sup_{k\in\bZ}\phi(2^k)\sum_{j= k+1}^{\infty}2^{(j-k)(\frac{d}{r} + \frac{d}{ps}-\frac{d}{p})} \|\Delta_jf\|_{L_p(w)}\\
    &=:J_{1,\infty}+J_{2,\infty}.
\end{align*}
Thus the proof for $X=B$ and $\cX = \cB$ is complete if we show the following inequalities:
\begin{align}
J_{1,q}, J_{2,q} &\lesssim \sum_{j\in\bZ}\phi(2^j)^q\|\Delta_jf\|_{L_p(w)}^q, \quad q\in [1,\infty),\\
J_{1,\infty}, J_{2,\infty} &\lesssim \sup_{j\in\bZ}\phi(2^j)\|\Delta_jf\|_{L_p(w)},\quad q=\infty.
\end{align}

\vspace{3mm}

\textbf{Case 1.} Estimation of $J_{1,q}$.

For $q\in[1,\infty)$, by H\"older's inequality it follows that
\begin{align*}
    \Big(\sum_{j\leq k} 2^{(j-k)L} \|\Delta_jf\|_{L_p(w)}\Big)^q
    &\leq N \sum_{j\leq k}2^{q(j-k)(L-\delta)}\|\Delta_jf\|_{L_p(w)}^q.
\end{align*}
Since $\phi\in \cI_o(d/R_{w,p}+d/(p\Gamma_w)-d/p,M)$, by \eqref{w scaling} we have
\begin{align}
\label{23.06.12.16.34}
2^{(k-j)(\frac{d}{R_{w,p}}+\frac{d}{p\Gamma_w}-\frac{d}{p}+\varepsilon)}\lesssim\frac{\phi(2^k)}{\phi(2^j)}\lesssim 2^{(k-j)(M-\varepsilon)},\quad k\geq j.
\end{align}
for sufficiently small $\varepsilon>0$.
Then we apply Fubini's theorem and \eqref{23.06.12.16.34} to obtain
\begin{align*}
    \sum_{k\in\bZ}\phi(2^k)^q\sum_{j\leq k}2^{q(j-k)(L-\delta)}\|\Delta_jf\|_{L_p(w)}^q
    &=\sum_{j\in\bZ}\sum_{k=j}^\infty \phi(2^k)^q2^{q(j-k)(L-\delta)}\|\Delta_jf\|_{L_p(w)}^q\\
    &\leq \sum_{j\in\bZ}\sum_{k= j}^\infty \phi(2^j)^q2^{q(j-k)(L-M-\delta+\varepsilon)}\|\Delta_jf\|_{L_p(w)}^q.
\end{align*}
Since $L-M-\delta+\varepsilon>0$ for appropriate $\varepsilon,\delta>0$, $\sum_{k=j}^{\infty}2^{q(j-k)(L-M-\delta+\varepsilon)}\leq N$,
where $N$ is independent of $j$. 
Thus
$$
J_{1,q}\leq N\sum_{j\in\bZ}\phi(2^j)^q\|\Delta_jf\|_{L_p(w)}^q \quad \forall q\in[1,\infty).
$$
For $q=\infty$, by \eqref{23.06.12.16.34},
\begin{align*}
    \phi(2^k)\sum_{j\leq k}2^{(j-k)L}\|\Delta_jf\|_{L_p(w)}   &\lesssim\sum_{j\leq k}\phi(2^j)2^{(j-k)(L-M+\varepsilon)}\|\Delta_jf\|_{L_p(w)}\\
    &\leq\sup_{j\in\bZ}\phi(2^j)\|\Delta_j f\|_{L_p(w)}\sum_{j\leq k}2^{(j-k)(L-M+\varepsilon)}.
\end{align*}
Since $L-M+\varepsilon>0$, $\sum_{j\leq k}2^{(j-k)(L-M+\varepsilon)}\leq N$,
where $N$ is independent of $k$. Thus
$$
J_{1,\infty}\leq N\sup_{j\in\bZ}\phi(2^j)\|\Delta_jf\|_{L_p(w)}.
$$

\vspace{3mm}

\textbf{Case 2.} Estimation of $J_{2,q}$.

For $q\in[1,\infty)$, by H\"older's inequality we have
\begin{align*}
    \Big(\sum_{j\geq k+1} 2^{(j-k)(\frac{d}{r} + \frac{d}{ps}-\frac{d}{p})} \|\Delta_jf\|_{L_p(w)}\Big)^q
    \lesssim \sum_{j\geq k+1}2^{q(j-k)(\frac{d}{r} + \frac{d}{ps}-\frac{d}{p}+\delta)}\|\Delta_jf\|_{L_p(w)}^q.
\end{align*}
Modifying \eqref{23.06.12.16.34}, we have
\begin{align}
\label{23.06.12.16.48}
2^{(j-k)(\frac{d}{R_{w,p}}+\frac{d}{p\Gamma_w}-\frac{d}{p}+\varepsilon)}\lesssim\frac{\phi(2^j)}{\phi(2^k)}\lesssim 2^{(j-k)(M-\varepsilon)},\quad j\geq k.
\end{align}
Then applying Fubini's theorem and \eqref{23.06.12.16.48} yields
\begin{align*}
    &\sum_{k\in\bZ}\phi(2^k)^q\sum_{j\geq k+1}2^{q(j-k)(\frac{d}{r} + \frac{d}{ps}-\frac{d}{p}+\delta)}\|\Delta_jf\|_{L_p(w)}^q\\
    &=\sum_{j\in\bZ}\sum_{k\leq j-1}\phi(2^k)^q 2^{q(j-k)(\frac{d}{r} + \frac{d}{ps}-\frac{d}{p}+\delta)} \|\Delta_jf\|_{L_p(w)}^q\\
    &\leq N\sum_{j\in\bZ}\sum_{k\leq j}\phi(2^{j})^q 2^{q(k-j)(\frac{d}{R_{w,p}} + \frac{d}{p\Gamma_w}-\frac{d}{p} + \varepsilon)} 2^{q(j-k)(\frac{d}{r} + \frac{d}{ps}-\frac{d}{p} + \delta)}\|\Delta_jf\|_{L_p(w)}^q.
\end{align*}
Thus for sufficiently small $\delta>0$ and appropriate $r,s$ satisfying $\varepsilon>\delta+d(\frac{1}{r}-\frac{1}{R_{w,p}}+\frac{1}{p}(\frac{1}{s}-\frac{1}{\Gamma_w}))$, we have
$$
\sum_{k\leq j} 2^{q(j-k)( \delta-\varepsilon+d(\frac{1}{r}-\frac{1}{R_{w,p}}+\frac{1}{p}(\frac{1}{s}-\frac{1}{\Gamma_w})))} \leq N<\infty,
$$
where $N$ is independent of $j$. 
Hence for $q\in[1,\infty)$
$$
J_{2,q}\leq N\sum_{j\in\bZ}\phi(2^j)^q\|\Delta_jf\|_{L_p(w)}^q.
$$

For $q=\infty$, by \eqref{23.06.12.16.48},
\begin{align*}
    &\phi(2^k)\sum_{j \geq k+1} 2^{(j-k)(\frac{d}{r} + \frac{d}{ps} - \frac{d}{p} +\delta)} \|\Delta_jf\|_{L_p(w)}\\
    &= \sum_{j\geq k+1 } \phi(2^k)2^{(j-k)(\frac{d}{r} + \frac{d}{ps} - \frac{d}{p}+\delta)} \|\Delta_jf\|_{L_p(w)}\\
    &\lesssim \sum_{j \geq k+1} \phi(2^j) 2^{(k-j)(\frac{d}{R_{w,p}} + \frac{d}{p\Gamma_w} -\frac{d}{p} + \varepsilon)}2^{(j-k)(\frac{d}{r} + \frac{d}{ps} - \frac{d}{p}+\delta)} \|\Delta_jf\|_{L_p(w)}\\
    &\leq \sup_{j\in\bZ}\phi(2^j)\|\Delta_jf\|_{L_p(w)}\sum_{j\geq k+1} 2^{(j-k)( \delta-\varepsilon+d(\frac{1}{r}-\frac{1}{R_{w,p}}+\frac{1}{p}(\frac{1}{s}-\frac{1}{\Gamma_w})))}
\end{align*}
Since we can choose $\delta,r,s$ satisfying $\varepsilon >\delta+d(\frac{1}{r}-\frac{1}{R_{w,p}}+\frac{1}{p}(\frac{1}{s}-\frac{1}{\Gamma_w}))$, we have
$$
	\sum_{j\geq k+1} 2^{(j-k)( \delta-\varepsilon+d(\frac{1}{r}-\frac{1}{R_{w,p}}+\frac{1}{p}(\frac{1}{s}-\frac{1}{\Gamma_w})))} < N
$$
uniformly in $k$.
Thus it follows that 
$$
	J_{2,\infty}\leq N \sup_{j\in\bZ}\phi(2^j)\|\Delta_jf\|_{L_p(w)}.
$$

\subsubsection{Case of $(X,\cX)=(F,\cF)$}

When $X=F$ and $\cX = \cF$, we make use of Peetre's maximal function.
That is, we apply Lemma~\ref{22.12.28.17.28} $(iv)$ and obtain
\begin{equation}\label{ineq_230315_2117}
\begin{aligned}
	\sup_{|\rho|\leq 2^{-k}}|\cD^L_{\rho}f|
	&\leq N\sum_{j\in\bZ}\sup_{|\rho|\leq 2^{-k}}|\cD_{\rho}^L\Delta_jf|\\
	&\leq N\sum_{j\in\bN}\min(1,2^{(j-k)L})(1+2^{j-k})^{d/r}|M_{j,d/r}^{*}f|.
\end{aligned}
\end{equation}
Comparing \eqref{ineq_230322_1605} and \eqref{ineq_230315_2117}, note that \eqref{ineq_230315_2117} has $(1+2^{j-k})^{d/r}$ instead of $(1+ 2^{j-k})^{\frac{d}{r} + \frac{d}{ps}-\frac{d}{p}}$.
To handle the difference, we assume $\phi$ is of class $\cI_o(d/R_{w,p}, M)$ rather than $\cI_o(d/R_{w,p} + d/(p\Gamma_w) - d/{p}, M)$.
Then, following the argument of $(X,\cX)=(B,\cB)$ case, we obtain the desired result under the assumption on $\phi$.
The lemma is proved.

\mysection{Proof of Proposition~\ref{prop_wBesov_equiv}}\label{sec_prop_equiv}

Let $X=B$ or $F$ so that $X_{p,q}^{\phi}(w) = B_{p,q}^{\phi}(w)$ or $F_{p,q}^{\phi}(w)$.
We show the following equivalence relations:
\begin{eqnarray}
	\|f\|_{X_{p,q}^\phi(\bR^d, w~\mathrm{d}x)} 
	\left\{
	\begin{array}{llll}
	\underset{\text{Lemma~\ref{23.02.08.15.47}}}{\simeq} X_0,\\
	\underset{\text{Lemma~\ref{23.02.08.15.52}}}{\simeq} X_1,\\
	\underset{\text{Lemma~\ref{23.02.08.15.54}}}{\simeq} X_2 \underset{\text{Lemma~\ref{23.02.08.15.55}}}{\simeq} X_3\\
	\underset{\text{Lemma~\ref{23.02.09.14.43}}}{\simeq} X_4, X_5.
	\end{array}
	\right.
\end{eqnarray}
Recall that $\|f\|_{B_{p,q}^\phi(w)}$, $\|f\|_{F_{p,q}^\phi(w)}$ are given by 
 \begin{eqnarray*}
    \|f\|_{B_{p,q}^\phi(w)}&:=&\|S_0f\|_{L_p(w)}
    +\left\{
    \begin{array}{ll}
    \Big(\sum_{j=1}^{\infty}\phi(2^{j})^q\|\Delta_jf\|_{L_p(w)}^q\Big)^{1/q}, &q<\infty,\\
    \sup_{j\in\bN}\phi(2^j)\|\Delta_jf\|_{L_p(w)}, &q=\infty,
    \end{array}
    \right.\\
\|f\|_{F_{p,q}^\phi(w)}&:=&\|S_0f\|_{L_p(w)}
    +\Big\|\Big(\sum_{j=1}^{\infty}\phi(2^{j})^q|\Delta_jf|^q\Big)^{1/q}\Big\|_{L_p(\bR^d,w\,\mathrm{d}x)}.
\end{eqnarray*}

\begin{lem}\label{23.02.08.15.47}
	$ \|f\|_{X_{p,q}^{\phi}(w)}\simeq X_0$.
\end{lem}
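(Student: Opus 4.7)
The plan is to establish the equivalence $\|f\|_{X_{p,q}^\phi(w)}\simeq X_0$ by resolving two discrepancies between the two quantities: the presence of $\|S_0 f\|_{L_p(w)}$ in $\|f\|_{X_{p,q}^\phi(w)}$ versus $\|f\|_{L_p(w)}$ in $X_0$, and the restriction of the $j$-sum to $j\geq 1$ in the former versus the full range $j\in\bZ$ in the latter. The argument splits into the two directions and treats the Besov and Triebel--Lizorkin cases in parallel, with essentially the same reasoning up to replacing the scalar $\ell^q$-summation by a pointwise $\ell^q$-sum followed by $L_p(w)$-integration.

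For the direction $\|f\|_{X_{p,q}^\phi(w)}\lesssim X_0$, the $j\geq 1$ tail of the norm is obviously dominated by the full sum in $X_0$. To control $\|S_0 f\|_{L_p(w)}$, I will invoke the pointwise bound $|S_0 f(x)|\lesssim \cM f(x)$ from Lemma~\ref{22.12.28.17.28}(v) together with the $A_p$-boundedness \eqref{2306111001} of the Hardy--Littlewood maximal operator, yielding $\|S_0f\|_{L_p(w)}\lesssim \|f\|_{L_p(w)}$.

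For the reverse direction $X_0\lesssim\|f\|_{X_{p,q}^\phi(w)}$, the term $\|f\|_{L_p(w)}$ is dominated by $\|f\|_{X_{p,q}^\phi(w)}$ directly from Lemma~\ref{Lp_bound}. What remains is to absorb the low-frequency tail $\sum_{j\leq 0}$ (and its vector-valued $F$-analogue) into $\|f\|_{X_{p,q}^\phi(w)}$. I will again apply Lemma~\ref{22.12.28.17.28}(v) to bound $|\Delta_j f(x)|\lesssim \cM f(x)$ uniformly in $j\in\bZ$, so that in the Besov case
\begin{align*}
\sum_{j\leq 0}\phi(2^j)^q\|\Delta_j f\|_{L_p(w)}^q\lesssim \Bigl(\sum_{j\leq 0}\phi(2^j)^q\Bigr)\|\cM f\|_{L_p(w)}^q\lesssim \Bigl(\sum_{j\leq 0}\phi(2^j)^q\Bigr)\|f\|_{L_p(w)}^q,
\end{align*}
and analogously in the Triebel--Lizorkin case by factoring the scalar $\ell^q$-sum outside the pointwise bound,
\begin{align*}
\Bigl(\sum_{j\leq 0}\phi(2^j)^q|\Delta_j f(x)|^q\Bigr)^{1/q}\lesssim \Bigl(\sum_{j\leq 0}\phi(2^j)^q\Bigr)^{1/q}\cM f(x).
\end{align*}
The key point making the tail sum finite is that $\phi\in\cI_o(0,M)$ combined with Proposition~\ref{23.05.25.13.12}(v) gives $\phi(2^j)\lesssim \phi(1)\,2^{j\varepsilon}$ for some $\varepsilon=\varepsilon(\phi)>0$ and all $j\leq 0$, so $\sum_{j\leq 0}\phi(2^j)^q$ converges geometrically (with the natural sup-modification when $q=\infty$).

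The main obstacle is quite mild: it is only the application of $|\Delta_j f|\lesssim \cM f$, which requires $f$ to be locally integrable rather than merely a tempered distribution. This is harmless because in both directions we may assume the right-hand side is finite, and by Lemma~\ref{Lp_bound} this already forces $f\in L_p(w)\subset L_{1,\mathrm{loc}}(\bR^d)$ (since the $A_p$-property implies $w^{-1/(p-1)}\in L_{1,\mathrm{loc}}$). All other steps reduce to applications of maximal function bounds and the scaling property \eqref{w scaling} of $\phi$.
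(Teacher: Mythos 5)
Your proposal is correct. In the direction $\|f\|_{X_{p,q}^{\phi}(w)}\lesssim X_0$ you use the same argument as the paper, namely $|S_0f|\lesssim\cM f$ plus the $A_p$-bound for $\cM$, and the $j\geq 1$ sum is trivially contained in $X_0$.

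In the converse direction you take a genuinely simpler route for the low-frequency tail. The paper controls $\sum_{j\leq 0}\phi(2^j)^q\|\Delta_jf\|_{L_p(w)}^q$ in the Besov case by first exploiting the spectral identity $\Delta_jf=\Delta_j(S_0f+\Delta_1f)$ for $j\leq 0$, so that the tail is absorbed directly into $\|S_0f\|_{L_p(w)}+\|\Delta_1f\|_{L_p(w)}$; and in the Triebel--Lizorkin case it invokes the weighted vector-valued inequality of Lemma~\ref{23.01.31.17.22} with $T_j=\Delta_j$, $G_j=\phi(2^j)f$. You instead bound $|\Delta_jf(x)|\lesssim\cM f(x)$ uniformly in $j$ and pull the convergent scalar factor $\bigl(\sum_{j\leq 0}\phi(2^j)^q\bigr)^{1/q}$ out of the $\ell^q$-sum pointwise, then conclude via the $A_p$-boundedness of $\cM$ and Lemma~\ref{Lp_bound}. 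This is more elementary: it avoids the Rubio de Francia--type Lemma~\ref{23.01.31.17.22} entirely in the $F$-case and avoids the Fourier-support decomposition in the $B$-case, at the price of routing the bound through $\|f\|_{L_p(w)}$ and hence through Lemma~\ref{Lp_bound}, whereas the paper's Besov argument stays inside the terms already present in the $B_{p,q}^\phi$-norm. Your remark about local integrability (needed to apply $|S_0 f|,|\Delta_j f|\lesssim\cM f$) and the observation that $\phi\in\cI_o(0,M)$ gives $\phi(2^j)\lesssim\phi(1)2^{j\varepsilon}$ for $j\leq 0$ correctly close the two potential gaps. Both proofs are valid; yours is shorter and uses lighter machinery, while the paper's $F$-case argument via the vector-valued inequality is the one that generalizes when one cannot factor the $j$-dependence out of the operator.
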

\begin{proof}
Recall that 
\begin{align*}
	B_0 &= \|f\|_{L_p(w)} + \Big\| \{ \phi(2^j)^q\|\Delta_jf\|_{L_p(w)}^q \}_{j\in\bZ} \Big\|_{\ell^q(\bZ)},\\
	F_0 &= \|f\|_{L_p(w)} + \Big\| \| \{ \phi(2^j)^q \left| \Delta_jf \right|^q\}_{j\in\bZ} \|_{\ell^q(\bZ)} \Big\|_{L_p(w)},
\end{align*}
when $q\geq1$ including $q=\infty$.
Since $| S_0f (x) |\leq N\cM(f)(x)$, one immediately obtains
$$
\|f\|_{X_{p,q}^{\phi}(w)}\lesssim X_0.
$$
For the converse inequality, observe that
\begin{align}
	\|f\|_{L_p(w)} &\leq  \|S_0f\|_{L_p(w)}+ A_{q'}\Big\| \{\phi(2^j) \|\Delta_jf\|_{L_p(w)}\}_{j\geq1} \Big\|_{\ell^q(\bZ)},\label{ineq_230504_1659}\\
	\|f\|_{L_p(w)} &\leq  \|S_0f\|_{L_p(w)}+ A_{q'}\Big\| \| \{\phi(2^j) |\Delta_jf|\}_{j\geq1} \|_{\ell^q(\bZ)} \Big\|_{L_p(w)},\label{ineq_230504_1700}
\end{align}
where$A_{q'} = (\sum_{j\geq1} \phi(2^j)^{-q'})^{1/q'}$, which is bounded by $(\sum_{j\geq1} 2^{-j\varepsilon q'})^{1/q'}$ for some $\varepsilon>0$.
Note that we have $\|f\|_{L_p(w)} \lesssim \|f\|_{X_{p,q}^{\phi}(w)}$ by \eqref{ineq_230504_1659} and \eqref{ineq_230504_1700}.
Thus it suffices to handle the following terms:
$$
	\Big\| \{ \phi(2^j)^q\|\Delta_jf\|_{L_p(w)}^q \}_{j\in\bZ} \Big\|_{\ell^q(\bZ)},\quad \Big\| \| \{ \phi(2^j)^q \left| \Delta_jf \right|^q\}_{j\in\bZ} \|_{\ell^q(\bZ)} \Big\|_{L_p(w)}.
$$
We only consider $j\leq 0$ parts, since $j\geq1$ parts are already contained in $\|f\|_{X_{p,q}^{\phi}(w)}$.

When $X=B$, we make use of
$$
	\Delta_jf=\Delta_j(S_0+\Delta_1)f \quad \forall j=0,-1,-2,\cdots.
$$
Then together with $\phi(2^j) \lesssim 2^{j(M- \varepsilon)}$, we have for $q\in[1,\infty]$
\begin{align*}
	\Big\| \| \{ \phi(2^j)^q\|\Delta_jf\|_{L_p(w)}\}_{j\leq 0} \Big\|_{\ell^q}
	&\lesssim \| \{ 2^{j(M-\varepsilon)} \}_{j\leq1} \|_{\ell^q} \sup_{j\leq 0}\| \Delta_j f\|_{L_p(w)}\\
	&\lesssim \|S_0f\|_{L_p(w)}+\|\Delta_1f\|_{L_p(w)}.
\end{align*}
This certainly implies that
$$
	X_0 \lesssim \|f\|_{B_{p,q}^{\phi}(w)}.
$$

When $X=F$, we apply Lemma~\ref{23.01.31.17.22}.
Let $T_j:=\Delta_j$ and $f_j:=\phi(2^j)f$ in Lemma~\ref{23.01.31.17.22}. 
Since $|\Delta_jf(x)|\leq N\cM(f)(x)$, one verifies that $\{T_j\}_{j\leq0}$ is linearizable and satisfies \eqref{22.09.13.16.42}. 
Then by Lemma \ref{23.01.31.17.22}, it follows that
\begin{align*}
	\Big\| \Big( \sum_{j\leq 0} \phi(2^j)^q \left| \Delta_jf \right|^q\Big)^{\frac{1}{q}} \Big\|_{L_p(w)}
	&=\Big\| \Big( \sum_{j\leq 0} \left| T_jf_j\right|^q \Big)^{\frac{1}{q}} \Big\|_{L_p(w)}\\
	&\lesssim \Big\| \Big( \sum_{j\leq 0} \left| f_j \right|^q\Big)^{\frac{1}{q}} \Big\|_{L_p(w)}
	\lesssim \|f\|_{L_p(w)},
\end{align*}
where the last inequality follows from $\phi(2^j)\lesssim 2^{j(M - \varepsilon)}$.
The lemma is proved.
\end{proof}

\begin{lem}
\label{23.02.08.15.52}
$ \|f\|_{X_{p,q}^{\phi}(w)}\simeq X_1$.
\end{lem}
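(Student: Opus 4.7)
The equivalence is proved by two separate inclusions, following the classical ``admissible approximation'' characterization of Besov and Triebel--Lizorkin spaces.

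\textbf{Step 1: }$\|f\|_{X_{p,q}^{\phi}(w)}\lesssim X_1$. Given any admissible decomposition $\{f_j\}_{j\geq 0}$ with $\supp(\widehat{f_j})\subseteq B_{2^{j+1}}(0)$ and $f_j\to f$ in $\cS'$, the key frequency disjointness observation is that $\supp\widehat{\psi}_j\subseteq\{2^{j-1}\leq|\xi|\leq 2^{j+1}\}$ is disjoint from $\supp\widehat{f_{j-2}}\subseteq B_{2^{j-1}}(0)$ whenever $j\geq 2$. Consequently $\Delta_jf=\Delta_j(f-f_{j-2})$ as tempered distributions for $j\geq 2$, so Lemma~\ref{22.12.28.17.28}(v) together with the weighted maximal bound \eqref{2306111001} gives
\begin{align*}
\|\Delta_jf\|_{L_p(w)}\lesssim \|f-f_{j-2}\|_{L_p(w)}\qquad\text{pointwise a.e.\ in the }F\text{ case as well.}
\end{align*}
Since $\phi\in\cI_o(0,M)$ we have $\phi(2^j)\leq C\phi(2^{j-2})$ uniformly in $j$, so the weighted $\ell^q$ (respectively mixed $L_p(\ell^q)$) sum over $j\geq 2$ is controlled by the corresponding sum defining $X_1$. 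The low-frequency terms $\|S_0f\|_{L_p(w)}$ and $\|\Delta_1f\|_{L_p(w)}$ are handled by the splitting $f=f_0+(f-f_0)$, using $|S_0g|,|\Delta_1g|\lesssim\cM g$ to obtain $\|S_0f\|_{L_p(w)}+\|\Delta_1 f\|_{L_p(w)}\lesssim \|f_0\|_{L_p(w)}+\|f-f_0\|_{L_p(w)}$, both of which are absorbed in $X_1$. Taking the infimum over admissible $\{f_j\}$ yields the claim.

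\textbf{Step 2: }$X_1\lesssim\|f\|_{X_{p,q}^{\phi}(w)}$. Here I propose the canonical choice $f_j:=S_0f+\sum_{k=1}^{j}\Delta_kf$, which has Fourier support in $B_{2^{j+1}}(0)$ and converges to $f$ in $\cS'$ (this convergence is legitimate because Lemma~\ref{Lp_bound} gives $X_{p,q}^\phi(w)\hookrightarrow L_p(w)$). Then $\|f_0\|_{L_p(w)}=\|S_0f+\Delta_1 f\|_{L_p(w)}\lesssim\|f\|_{X_{p,q}^\phi(w)}$ directly from Lemma~\ref{23.02.08.15.47}, while $f-f_j=\sum_{k>j}\Delta_kf$. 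It thus remains to estimate
\begin{align*}
\Big(\sum_{j\geq 0}\phi(2^j)^q\|f-f_j\|_{L_p(w)}^q\Big)^{1/q}\quad\text{and}\quad \Big\|\Big(\sum_{j\geq 0}\phi(2^j)^q|f-f_j|^q\Big)^{1/q}\Big\|_{L_p(w)}
\end{align*}
by $\|f\|_{B_{p,q}^\phi(w)}$ and $\|f\|_{F_{p,q}^\phi(w)}$ respectively.

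\textbf{Step 3 (the main technical point): discrete Hardy inequality.} In the Besov case, applying Minkowski's inequality and then Hölder with a geometric weight $2^{-(k-j)\varepsilon/2}$ reduces matters to bounding, after interchanging the order of summation, $\sum_{k\geq 1}\|\Delta_kf\|_{L_p(w)}^q\sum_{j=0}^{k-1}\phi(2^j)^q 2^{(k-j)\varepsilon q/2}$. Proposition~\ref{23.05.25.13.12}(v) (applied with $\phi\in\cI_o(0,M)$) gives $\phi(2^j)\lesssim 2^{-(k-j)\varepsilon}\phi(2^k)$ for some $\varepsilon>0$ and all $k\geq j$, and the remaining geometric series converges, producing the bound $\sum_{k\geq 1}\phi(2^k)^q\|\Delta_kf\|_{L_p(w)}^q$. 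In the Triebel--Lizorkin case exactly the same computation is carried out pointwise inside the $L_p(w)$-norm, using $|f-f_j|\leq\sum_{k>j}|\Delta_kf|$, after which the bound follows from the definition of $\|f\|_{F_{p,q}^\phi(w)}$. The obvious modifications for $q=\infty$ (and, where relevant, $q=1$) are straightforward.

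\textbf{Anticipated main obstacle.} The most delicate point is verifying the discrete Hardy-type inequality uniformly in the two norm geometries, and choosing the exponent $\varepsilon$ in Proposition~\ref{23.05.25.13.12}(v) so that the geometric tails converge on both sides of the Hölder step. Once this is set up correctly, everything else is a matter of bookkeeping with the Fourier support conditions and the frequency shift $j\mapsto j-2$.
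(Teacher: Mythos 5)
Your proposal is correct and follows the same two--sided structure as the paper (canonical truncation $f_j=S_0f+\sum_{k=1}^j\Delta_kf$ for one inclusion, arbitrary admissible family for the other), but you take a genuinely shorter path for the inclusion $\|f\|_{X_{p,q}^\phi(w)}\lesssim X_1$. You exploit the spectral disjointness $\Delta_jf_{j-2}=0$ \emph{once}, obtaining $\Delta_jf=\Delta_j(f-f_{j-2})$ and hence $|\Delta_jf|\lesssim\cM(f-f_{j-2})$, which immediately yields $\|\Delta_jf\|_{L_p(w)}\lesssim\|f-f_{j-2}\|_{L_p(w)}$ and (after the harmless shift $\phi(2^{j+2})\lesssim\phi(2^j)$, i.e.\ $s_\phi(4)<\infty$) the desired bound by $X_1$. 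The paper instead telescopes $f-f_{j-2}=\sum_i(f_i-f_{i-1})$, which forces an extra H\"older/Fubini reduction (the analogue of \eqref{ineq_230316_1512}) \emph{and} a final reassembly step converting $\|f_j-f_{j-1}\|_{L_p(w)}$ back to $\|f-f_j\|_{L_p(w)}$; your version skips both. Note that the paper's telescoping also relies on the same implicit fact $\Delta_jf_{j-2}=0$, so you are not assuming anything extra. In the other direction, your Steps~2--3 (geometric-weight H\"older, Fubini, and $\phi(2^j)\lesssim 2^{-(k-j)\varepsilon}\phi(2^k)$ from Proposition~\ref{23.05.25.13.12}(v)) match the paper's argument essentially line by line.

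A few small imprecisions, none of which affect the substance. In the Triebel--Lizorkin case, to pass from $\cM(f-f_{j-2})$ to $f-f_{j-2}$ inside the $L_p(\ell^q)$--norm you need the weighted \emph{vector-valued} maximal inequality of Lemma~\ref{23.01.31.17.22} with $T_j=\cM$, not just the scalar bound \eqref{2306111001} that you cite. Relatedly, the sentence ``$\|\Delta_jf\|_{L_p(w)}\lesssim\|f-f_{j-2}\|_{L_p(w)}$ pointwise a.e.\ in the $F$ case as well'' conflates a norm inequality with the pointwise inequality $|\Delta_jf|\lesssim\cM(f-f_{j-2})$; it is the latter that should be plugged into Lemma~\ref{23.01.31.17.22}. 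Finally, under your own definition $f_j:=S_0f+\sum_{k=1}^j\Delta_kf$ one has $f_0=S_0f$, not $S_0f+\Delta_1f$; the bound $\|f_0\|_{L_p(w)}\lesssim\|f\|_{X_{p,q}^\phi(w)}$ of course still holds.
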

\begin{proof}
Recall that 
\begin{align*}
	B_1 &= \inf\Big(\|f\|_{L_p(w)} + \Big(\sum_{j\geq0} \phi(2^j)^q \|f-f_j\|_{L_p(w)}\Big)^{\frac{1}{q}} \Big),\\
	F_1 &=  \inf\Big(\|f\|_{L_p(w)} + \Big\| \Big(\sum_{j\geq0} \phi(2^j)^q |f-f_j|^q \Big)^{\frac{1}{q}} \Big) \Big\|_{L_p(w)},
\end{align*}
where the infimum runs over all sequences of functions $\{f_j\}_{j\geq0} \subset \mathcal{S}'(\bR^d)\cap L_p(w)$ such that
$$
	f = \lim_{j\to\infty}f_j\,\,\text{in}\,\, \mathcal{S}'(\bR^d), \quad \text{and}\quad \supp(\widehat{f}_j) \subset B_{2^{j+1}}(0).
$$

\textbf{Step 1.} $X_1 \lesssim\|f\|_{X_{p,q}^{\phi}(w)}$.

In this step, by the similarity, we only prove for $X=B$.
From the support condition of $\widehat{f_j}$, we have
$$
	f_j:=S_0f+\sum_{k=1}^{j}\Delta_kf,\quad f_0:=S_0f,
$$
and
$$
	f-f_j=\sum_{k=j+1}^{\infty}\Delta_kf,
$$
Then by Minkowski's inequality, it follows that 
$$
	\|f-f_j\|_{L_p(w)}\leq \sum_{k=j+1}^{\infty}\|\Delta_kf\|_{L_p(w)}.
$$
For $q\in[1,\infty)$, we apply H\"older's inequality to obtain
\begin{align*}
    \sum_{j=0}^{\infty}\phi(2^j)^q\|f-f_j\|_{L_p(w)}^q
    &\leq \sum_{j=0}^{\infty}\phi(2^j)^q\left(\sum_{k=j+1}^{\infty}\|\Delta_kf\|_{L_p(w)}\right)^{q}\\
    &\leq N\sum_{j=0}^{\infty}\phi(2^j)^q\sum_{k=j+1}^{\infty}2^{q\delta(k-j-1)}\|\Delta_kf\|_{L_p(w)}^{q}\\
    &\leq N\sum_{j=0}^{\infty}\sum_{k=j+1}^{\infty}\phi(2^k)^q2^{q(\delta-\varepsilon)(k-j-1)}\|\Delta_kf\|_{L_p(w)}^{q},
\end{align*}
where $\delta=0$ if $q=1$ and $\delta>0$ if $q\in(1,\infty)$. If we choose $\varepsilon>\delta\geq0$ and use Fubini's theorem, then
\begin{align*}
    \sum_{j=0}^{\infty}\sum_{k=j+1}^{\infty}\phi(2^k)^q2^{q(\delta-\varepsilon)(k-j-1)}\|\Delta_kf\|_{L_p(w)}^{q}
    &=\sum_{k=1}^{\infty}\sum_{j=0}^{k-1}\phi(2^k)^q2^{q(\delta-\varepsilon)(k-j-1)}\|\Delta_kf\|_{L_p(w)}^{q}\\
    &\leq N\sum_{k=1}^{\infty}\phi(2^k)^q\|\Delta_kf\|_{L_p(w)}^{q}\leq N\|f\|_{B_{p,q}^{\phi}(w)}^q.
\end{align*}
For $q=\infty$, we simply have
\begin{equation*}
\begin{gathered}
	\phi(2^j)\|f-f_j\|_{L_p(w)}
	\leq N\sum_{k=j+1}^{\infty}2^{\varepsilon(j-k)}\phi(2^k)\|\Delta_kf\|_{L_p(w)}
	\leq N\sup_{k\in\bN}\phi(2^k)\|\Delta_kf\|_{L_p(w)}.
\end{gathered}
\end{equation*}
This implies that
$$
	B_1 \leq N\|f\|_{B_{p,q}^{\phi}(w)}.
$$

\textbf{Step 2.} $\|f\|_{X_{p,q}^{\phi}(w)}\lesssim X_1$.

First we consider $X=B$. Similar to the previous case, for $k\geq2$ one can observe that
$$
	\Delta_kf=\sum_{j=-1}^{\infty}\Delta_k(f_{k+j}-f_{k+j-1}).
$$
Since $\Delta_j f$ is less than a constant multiple of $\cM f$, we use the weighted $L_p$ boundedness of the Hardy-Littlewood maximal function so that
\begin{align*}
    \|\Delta_kf\|_{L_p(w)}\leq N\sum_{j=k-1}^{\infty}\|f_{j}-f_{j-1}\|_{L_p(w)}.
\end{align*}
Take $\delta=0$ if $q=1$ and $\delta>0$ if $q\in(1,\infty)$. 
For $q\in[1,\infty)$, we apply H\"older's inequality and Fubini's theorem to obtain
\begin{equation}\label{ineq_230316_1512}
\begin{aligned}
    \sum_{k=2}^{\infty}\phi(2^k)^q\|\Delta_kf\|_{L_p(w)}^q
    &\leq N\sum_{k=2}^{\infty}\phi(2^k)^q\sum_{j=k-1}^{\infty}2^{q\delta(j-k)}\|f_j-f_{j-1}\|_{L_p(w)}^q\\
    &\leq N\sum_{k=2}^{\infty}\sum_{j=k-1}^{\infty}\phi(2^j)^q2^{q(\delta-\varepsilon)(j-k)}\|f_j-f_{j-1}\|_{L_p(w)}^q\\
    &=N\sum_{j=1}^{\infty}\sum_{k=2}^{j+1}\phi(2^j)^q2^{q(\delta-\varepsilon)(j-k)}\|f_j-f_{j-1}\|_{L_p(w)}^q\\
    &=N\sum_{j=1}^{\infty}\phi(2^j)^q\|f_j-f_{j-1}\|_{L_p(w)}^q,
\end{aligned}
\end{equation}
where $\varepsilon>\delta\geq0$. 
For $q=\infty$, we simply have
$$
	\phi(2^k)\|\Delta_kf\|_{L_p(w)}\leq N\sum_{j=k-1}^{\infty}2^{\varepsilon(k-j)}\phi(2^j)\|f_j-f_{j-1}\|_{L_p(w)}\leq N\sup_{j\in\bN}\phi(2^j)\|f_j-f_{j-1}\|_{L_p(w)}.
$$

It is left to control $\|S_0 f\|_{L_p(w)}$ and $\|\Delta_j f\|_{L_p(w)}$.
One can observe that
\begin{align*}
    \|S_0f\|_{L_p(w)}+\|\Delta_1f\|_{L_p(w)}
    &\leq N\|f\|_{L_p(w)}
    	\leq N\|f_0\|_{L_p(w)}+\|f-f_0\|_{L_p(w)}\\
    &\lesssim \|f_0\|_{L_p(w)}+\Big( \sum_{j=0}^\infty \phi(2^j)^q \|f-f_j\|_{L_p(w)}^q \Big)^{\frac{1}{q}}.
\end{align*}
Therefore,
\begin{align*}
    \|f\|_{B_{p,q}^{\phi}(w)}&\lesssim \|f_0\|_{L_p(w)}+\Big( \sum_{j=0}^\infty \phi(2^j)^q \|f-f_j\|_{L_p(w)}^q \Big)^{\frac{1}{q}}
\end{align*}
for all sequences of functions $\{f_j\}_{j=0}^{\infty}$ and this certainly implies that
$$
	\|f\|_{B_{p,q}^{\phi}(w)}\lesssim X_1.
$$

Now, we suppose $X=F$. 
Similar to \eqref{ineq_230316_1512}, for $q\in(1,\infty)$ we have
$$
	\sum_{k=2}^{\infty}\phi(2^k)^q|\Delta_kf|^q\leq N\sum_{j=1}^{\infty}\phi(2^j)^q\sum_{k=2}^{j+1}2^{q(\delta-\varepsilon)(j-k)}|\Delta_k(f_j-f_{j-1})|^q.
$$
Then $|\Delta_k(f_j-f_{j-1})|\leq N\cM(f_j-f_{j-1})$ yields 
\begin{align*}
    \sum_{j=1}^{\infty}\phi(2^j)^q\sum_{k=2}^{j+1}2^{q(\delta-\varepsilon)(j-k)}|\Delta_k(f_j-f_{j-1})|^q
    &\leq N\sum_{j=1}^{\infty}\phi(2^j)^q|\cM(f_j-f_{j-1})|^q\sum_{k=2}^{j+1}2^{q(\delta-\varepsilon)(j-k)}\\
    &= N\sum_{j=1}^{\infty}\phi(2^j)^q|\cM(f_j-f_{j-1})|^q.
\end{align*}
Let $T_j:=\cM$ and $G_j:=\phi(2^j)(f_j-f_{j-1})$ in Lemma~\ref{23.01.31.17.22}. 
Note that $\{T_j\}_{j\in\bN}$ is linearizable and satisfies \eqref{22.09.13.16.42}. 
Thus by Lemma \ref{23.01.31.17.22}, it follows that
\begin{align*}
    \Big\| \Big( \sum_{j\geq 1} \phi(2^j)^q \left| \cM(f_j-f_{j-1}) \right|^q \Big)^{\frac{1}{q}} \Big\|_{L_p(w)}
    &=\Big\| \Big( \sum_{j\geq 1}  \left| T_j G_j \right|^q \Big)^{\frac{1}{q}} \Big\|_{L_p(w)}\\
    &\lesssim \Big\| \Big( \sum_{j\geq 1}  \left| G_j \right|^q \Big)^{\frac{1}{q}} \Big\|_{L_p(w)}\\
    &=  \Big\| \Big( \sum_{j\geq 1} \phi(2^j)^q \left| f_j - f_{j-1} \right|^q \Big)^{\frac{1}{q}} \Big\|_{L_p(w)}
\end{align*}
Hence
\begin{align*}
    \|f\|_{F_{p,q}^{\phi}(w)}&\leq \|S_0f\|_{L_p(w)}+\Big\| \Big( \sum_{j\geq 1} \phi(2^j)^q \left| f_j - f_{j-1} \right|^q \Big)^{\frac{1}{q}} \Big\|_{L_p(w)},
\end{align*}
for all sequences of functions $\{f_j\}_{j=0}^{\infty}$ and this certainly implies that
$
	\|f\|_{F_{p,q}^{\phi}(w)}\lesssim F_1.
$
The lemma is proved.
\end{proof}

\begin{lem}
\label{23.02.08.15.54}
$\|f\|_{X_{p,q}^{\phi}(w)}\simeq X_2$.
\end{lem}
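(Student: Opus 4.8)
The plan is to prove $X_2\simeq X_0$; combined with Lemma~\ref{23.02.08.15.47} this gives the statement. Both $X_2$ and $X_0$ contain the summand $\|f\|_{L_p(w)}$, so only the two ``main parts'' must be compared. The guiding principle is that for $t\in I_j:=[2^{-j-1},2^{-j})$ the function $\psi_{1/t}\ast f$ behaves like $\Delta_j f$: since $\widehat{\psi_{1/t}}(\xi)=\widehat{\psi}(t\xi)$ is supported where $|\xi|\sim2^{j}$, we have $\psi_{1/t}\ast f=\psi_{1/t}\ast g_j$ with $g_j:=\sum_{|k-j|\le1}\Delta_k f$, and $\phi(t^{-1})\simeq\phi(2^{j})$ on $I_j$ because $t^{-1}=2^{j}\lambda$ with $\lambda\in[1,2]$ and $s_\phi(2),s_\phi(1/2)<\infty$ (submultiplicativity of $s_\phi$ together with $\phi\in\cI(0,M)$). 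Under $t=2^{-s}$ the integral $\int_0^\infty(\cdots)\tfrac{dt}{t}$ turns into a continuous version of $\sum_{j\in\bZ}$.

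For $X_2\lesssim X_0$: since $\psi\in\cS(\bR^d)$, $|\psi_{1/t}\ast g(x)|\lesssim\cM g(x)$ uniformly in $t>0$, hence $|\psi_{1/t}\ast f(x)|\lesssim\sum_{|k-j|\le1}\cM(\Delta_k f)(x)$ for $t\in I_j$. Splitting $\int_0^\infty\tfrac{dt}{t}=\sum_{j}\int_{I_j}\tfrac{dt}{t}$, using $\phi(t^{-1})\simeq\phi(2^{j})\simeq\phi(2^{k})$ for $|k-j|\le1$, and absorbing the finite overlap yields, for $X=B$,
\[
\int_0^\infty\phi(t^{-1})^q\|\psi_{1/t}\ast f\|_{L_p(w)}^q\,\tfrac{dt}{t}\ \lesssim\ \sum_{k\in\bZ}\phi(2^{k})^q\|\cM(\Delta_k f)\|_{L_p(w)}^q ,
\]
which is $\lesssim X_0^q$ by the weighted bound $\|\cM\|_{L_p(w)\to L_p(w)}<\infty$ in \eqref{2306111001}. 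For $X=F$ one runs the pointwise estimate inside the $x$-integral, reaching $\int_0^\infty\phi(t^{-1})^q|\psi_{1/t}\ast f|^q\tfrac{dt}{t}\lesssim\sum_k\phi(2^{k})^q(\cM\Delta_k f)^q$, takes $\|(\cdot)^{1/q}\|_{L_p(w)}$, and applies the weighted Fefferman--Stein inequality (Lemma~\ref{23.01.31.17.22} with $T_k=\cM$, $G_k=\phi(2^{k})\Delta_k f$) to replace $\cM\Delta_k f$ by $\Delta_k f$. The case $q=\infty$ is a straightforward simplification.

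The converse $X_0\lesssim X_2$ is the main difficulty, since one must recover $\Delta_j f$ from the continuous family $\{\psi_{1/t}\ast f\}_{t>0}$. I would use a Calder\'on-type reproducing formula: writing $\Theta(\zeta):=\int_{1/2}^{1}\widehat{\psi}(s\zeta)\,\tfrac{ds}{s}$, multiplication of $\widehat f$ by $\Theta(2^{-j}\cdot)$ corresponds (after the substitution $t=s2^{-j}$) to $f\mapsto\int_{I_j}\psi_{1/t}\ast f\,\tfrac{dt}{t}$, so if $\eta\in\cS(\bR^d)$ has $\widehat\eta$ supported in an annulus with $\sum_{j\in\bZ}\widehat\eta(2^{-j}\xi)\Theta(2^{-j}\xi)=1$ for $\xi\ne0$, then $f=\sum_{j\in\bZ}\eta^{[j]}\ast\int_{I_j}\psi_{1/t}\ast f\,\tfrac{dt}{t}$ in $\cS'(\bR^d)$, where $\eta^{[j]}(x):=2^{jd}\eta(2^jx)$. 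The construction of such an $\eta$ is the delicate point: it needs $\Theta$ to be bounded below on a sub-annulus whose dyadic dilates cover $\bR^d\setminus\{0\}$, which I arrange by fixing at the outset an admissible $\psi$ arising from a radial, radially decreasing $\widehat\varphi$, so that $\widehat\psi\ge0$ and $\widehat\psi$ is strictly positive on a sufficiently wide sub-annulus. Granted the formula, $\widehat{\psi_j}$ and $\widehat\eta(2^{-k}\cdot)$ have overlapping support only for $|k-j|\le C$, so $\Delta_j f=\sum_{|k-j|\le C}\psi_j\ast\eta^{[k]}\ast\int_{I_k}\psi_{1/t}\ast f\,\tfrac{dt}{t}$, and, using $|\psi_j\ast\eta^{[k]}\ast h|\lesssim\cM h$ and \eqref{2306111001},
\[
\|\Delta_j f\|_{L_p(w)}\ \lesssim\ \sum_{|k-j|\le C}\int_{I_k}\|\psi_{1/t}\ast f\|_{L_p(w)}\,\tfrac{dt}{t}.
\]
Since $\int_{I_k}\tfrac{dt}{t}=\log2$, Jensen's inequality and $\phi(2^{j})\simeq\phi(t^{-1})$ on these $I_k$ give $\phi(2^{j})^q\|\Delta_j f\|_{L_p(w)}^q\lesssim\sum_{|k-j|\le C}\int_{I_k}\phi(t^{-1})^q\|\psi_{1/t}\ast f\|_{L_p(w)}^q\tfrac{dt}{t}$, and summation over $j\in\bZ$ yields $X_0\lesssim X_2$ for $X=B$. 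For $X=F$ the same chain is carried out pointwise in $x$, with $\cM(\psi_{1/t}\ast f)$ replacing $\|\psi_{1/t}\ast f\|_{L_p(w)}$; the $L^q(\tfrac{dt}{t})$-valued weighted maximal inequality (the continuous analogue of Lemma~\ref{23.01.31.17.22}, or its discretization over the $I_k$) then removes the maximal operator, and $q=\infty$ is again a routine modification.
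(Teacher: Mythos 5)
Your proof is correct in outline, but the key direction is argued by a genuinely different mechanism than the paper's. The direction $X_2\lesssim X_0$ coincides with the paper's Step 2 (split $t$ into dyadic intervals, write $\psi_{1/t}\ast f$ as $\psi_{1/t}$ acting on a fixed finite block of $\Delta_k f$'s, dominate by $\cM$, and use \eqref{2306111001} resp.\ Lemma~\ref{23.01.31.17.22}); note only that your window $|k-j|\le 1$ is slightly too narrow for the stated supports --- the paper uses the four blocks $\Delta_{j-1},\dots,\Delta_{j+2}$ --- a trivial adjustment. For the converse, you build a full Calder\'on reproducing formula with an auxiliary kernel $\eta$ satisfying $\sum_j\widehat\eta(2^{-j}\xi)\Theta(2^{-j}\xi)=1$, which requires a lower bound for $\Theta$ on a covering annulus. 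The paper avoids this entirely: since $\sum_{i=-1}^{2}\widehat{\psi_{2^it}}\equiv 1$ on $\supp\widehat{\psi_{2^j}}$ for \emph{every} $t\in[2^{j-1},2^j)$ (a telescoping consequence of $\widehat\psi(\cdot)=\widehat\varphi(\cdot)-\widehat\varphi(2\cdot)$), averaging in $t$ gives $\Delta_jf=N\,\Delta_j(\Psi_j\ast f)$ with $\Psi_j=\sum_{i=-1}^{2}\int_{2^{j-1}}^{2^j}\psi_{2^it}\,\frac{\mathrm{d}t}{t}$, and the rest is the same $\cM$/vector-valued machinery you invoke. Your route buys a statement that would extend to kernels $\psi$ not arising from a telescoping pair, at the cost of the extra construction; the paper's identity is shorter and needs no positivity.

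One caveat you should address: to guarantee $\Theta>0$ on a suitable sub-annulus you \emph{restrict} $\varphi$ to be radial and radially decreasing, whereas the quantities $X_0$ and $X_2$ are defined with the fixed (more general) $\varphi$ of \eqref{ineq_230315_1246}. As written you therefore prove the lemma only for your special choice of $\psi$; to recover the stated lemma you would either need to prove independence of both norms from the admissible $\psi$ (itself a Calder\'on-type argument) or, better, replace your reproducing formula by the paper's telescoping identity, which works for any admissible $\varphi$.
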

\begin{proof}
Recall that
\begin{align*}
	B_2 &= \|f\|_{L_p(w)} + \Big( \int_0^\infty \phi(t^{-1})^q \left\| \psi_{\frac{1}{t}} \ast f \right\|_{L_p(w)}^q ~\frac{\mathrm{d}t}{t}\Big)^{\frac{1}{q}},\\
	F_2 &= \|f\|_{L_p(w)} + \Big\| \Big( \int_0^\infty \phi(t^{-1})^q \left| \psi_{\frac{1}{t}} \ast f \right|^q ~\frac{\mathrm{d}t}{t}\Big)^{\frac{1}{q}}\Big\|_{L_p(w)}.
\end{align*}

\textbf{Step 1.} $\|f\|_{X_{p,q}^{\phi}(w)}\lesssim X_2$.

Since $|S_0f|\leq N\cM(f)$, it suffices to prove
\begin{align*}
	\Big(\sum_{j\in\bZ} \phi(2^j)^q\|\Delta_jf\|_{L_p(w)}^q \Big)^{\frac{1}{q}}
	&\lesssim \Big( \int_0^\infty \phi(t^{-1})^q \left\| \psi_{\frac{1}{t}} \ast f \right\|_{L_p(w)}^q ~\frac{\mathrm{d}t}{t}\Big)^{\frac{1}{q}}\\
       \Big\| \Big( \sum_{j\in\bZ} \phi(2^j)^q \left| \Delta_jf \right|^q\Big)^{\frac{1}{q}} \Big\|_{L_p(w)}
       &\lesssim \Big\| \Big( \int_0^\infty \phi(t^{-1})^q \left| \psi_{\frac{1}{t}} \ast f \right|^q ~\frac{\mathrm{d}t}{t}\Big)^{\frac{1}{q}}\Big\|_{L_p(w)}.
\end{align*}
First we prove for $X=B$.
To do so, we define 
\begin{align}\label{ineq_230315_1543}
	\Psi_j:=\sum_{i=-1}^{2}\int_{2^{j-1}}^{2^j}\psi_{2^it}\frac{\mathrm{d}t}{t}\quad
	\text{so that} \quad
	\Delta_j f = N \Delta_j(\Psi_j \ast f).
\end{align}
Indeed, \eqref{ineq_230315_1543} follows from $\Delta_jf=\Delta_j \left( \left(\psi_{t/2}+\psi_{t}+\psi_{2t}+\psi_{4t} \right)\ast  f \right)$ for all $t\in[2^{j-1},2^{j})$ and $\int_{2^{j-1}}^{2^j} t^{-1}~\mathrm{d}t = N$.
By making use of \eqref{ineq_230315_1543}, we have for $p\in(1,\infty],q\in[1,\infty)$,
\begin{align*}
    \sum_{j=1}^{\infty}\phi(2^{j})^q\|\Delta_jf\|_{L_p(w)}^q&\leq N\sum_{j=1}^{\infty}\phi(2^{j})^q\|\Psi_j\ast f\|_{L_p(w)}^q\\
    &\leq N\sum_{j=1}^{\infty}\int_{2^{j-1}}^{2^{j}}\phi(2^{j})^q\sum_{i=-1}^2\|\psi_{2^it}\ast f\|_{L_p(w)}^q\frac{\mathrm{d}t}{t}\\
    &\leq N\sum_{i=-1}^2\int_{1}^{\infty}\phi(2^{i}t)^q\|\psi_{2^it}\ast f\|_{L_p(w)}^q\frac{\mathrm{d}t}{t}\\
    &\leq 4N\int_{1}^{\infty}\phi(t)^q\|\psi_t\ast f\|_{L_p(w)}^q\frac{\mathrm{d}t}{t}.
\end{align*}
Note that the last line equals to 
\begin{align}
	4N\int_{0}^{1}\phi(t^{-1})^q\|\psi_{1/t}\ast f\|_{L_p(w)}^q\frac{\mathrm{d}t}{t}.
\end{align}
If $q=\infty$, then
$$
	\phi(2^j)\|\Delta_jf\|_{L_p(w)}=\phi(2^j)\|\psi_{2^j}\ast f\|_{L_p(w)}\leq \sup_{t>0}\phi(t^{-1})\|\psi_{1/t}\ast f\|_{L_p(w)},
$$
and this implies that
$$
	\|f\|_{B_{p,q}^\phi(w)} \lesssim B_2,\quad \forall p\in(1,\infty],q\in[1,\infty].
$$

For $X=F$, we have
\begin{align*}
    \Big( \sum_{j\geq1} \phi(2^j)^q |\Delta_jf|^q \Big)^{\frac{1}{q}} 
    \lesssim \Big( \sum_{j\geq1} \phi(2^j)^q |\cM (\Phi_j \ast f )|^q \Big)^{\frac{1}{q}}.
\end{align*}
By Lemma \ref{23.01.31.17.22},
\begin{align*}
    \Big\| \Big( \sum_{j\geq1} \phi(2^j)^q |\cM (\Phi_j \ast f )|^q \Big)^{\frac{1}{q}}  \Big\|_{L_p(w)}
    \lesssim \Big\| \Big( \sum_{j\geq1} \phi(2^j)^q |\Phi_j \ast f |^q \Big)^{\frac{1}{q}}  \Big\|_{L_p(w)}.
\end{align*}
Note that 
\begin{align*}
    \sum_{j\geq 1} \phi(2^j)^q|\Phi_j\ast f|^q
    &\lesssim \sum_{j\geq 1}\sum_{i=-1}^{2}\int_{2^{j-1}}^{2^{j}}\phi(2^j)^q|\psi_{2^it}\ast f|^q\frac{\mathrm{d}t}{t}\\
    &\leq \sum_{j=1}^{\infty}\sum_{i=-1}^{2}\int_{2^{j-1}}^{2^{j}}\phi(2^it)^q|\psi_{2^it}\ast f|^q\frac{\mathrm{d}t}{t}\\
    &\lesssim \int_{0}^{\infty}\phi(t)^q|\psi_{t}\ast f|^q\frac{\mathrm{d}t}{t}.
\end{align*}
Hence we have
$$
	\|f\|_{F_{p,q}^{\phi}(w)}\lesssim F_2.
$$

\textbf{Step 2.} $X_2 \lesssim \|f\|_{X_{p,q}^{\phi}(w)}$.

By Lemma \ref{Lp_bound}, it suffices to prove
\begin{align*}
	\Big( \int_0^\infty \phi(t^{-1})^q \left\| \psi_{\frac{1}{t}} \ast f \right\|_{L_p(w)}^q ~\frac{\mathrm{d}t}{t}\Big)^{\frac{1}{q}}
	&\lesssim \|f\|_{B_{p,q}^{\phi}(w)},\\
	\Big\| \Big( \int_0^\infty \phi(t^{-1})^q \left| \psi_{\frac{1}{t}} \ast f \right|^q ~\frac{\mathrm{d}t}{t}\Big)^{\frac{1}{q}}\Big\|_{L_p(w)}
	&\lesssim \|f\|_{F_{p,q}^{\phi}(w)}.
\end{align*}
We begin with $X=B$. 
If $t<1/4$, then $\psi_t\ast f=\psi_t\ast S_0f$, hence we have
\begin{align*}
	\int_0^{1/4}\phi(t)^{q}\left\|\psi_t\ast f\right\|_{L_p(w)}^q\frac{\mathrm{d}t}{t}
	&\leq N\|S_0f\|_{L_p(w)}^q\int_0^{1/4}\phi(t)^{q}\frac{\mathrm{d}t}{t}\\
	&=N\|S_0f\|_{L_p(w)}^q,\quad \forall q\in[1,\infty)
\end{align*}
and
$$
	\sup_{t\in(0,1/4)}\phi(t)\|\psi_t\ast f\|_{L_p(w)}\leq \sup_{t\in(0,1/4)}\phi(t)\|S_0f\|_{L_p(w)}\leq N\|S_0f\|_{L_p(w)}.
$$
Then, we make use of the following equality
$$
\psi_t\ast f=\psi_t\ast(\Delta_{j-1}f+\Delta_{j}f+\Delta_{j+1}f+\Delta_{j+2}f),\quad \forall t\in[2^{j-1},2^j), j\geq-1
$$
to obtain that for $q\in[1,\infty)$
\begin{align*}
    \int_{1/4}^{\infty}\phi(t)^q\left\|\psi_t\ast f\right\|_{L_p(w)}^q\frac{\mathrm{d}t}{t}
    &=\sum_{j=-1}^{\infty}\int_{2^{j-1}}^{2^{j}}\phi(t)^q\sum_{i=-1}^2\|\Delta_{j+i}f\|_{L_p(w)}^q\frac{\mathrm{d}t}{t}\\
    &\leq N\|f\|_{B_{p,q}^{\phi}(\bR^d,w,\mathrm{d}x)}^q.
\end{align*}
For $q=\infty$, we have
\begin{align*}
	\phi(t)\|\psi_t\ast f\|_{L_p(w)}
	&\leq \phi(t)\sum_{i=-1}^2\|\Delta_{j+i}f\|_{L_p(w)}\\
	&\leq N\|f\|_{B_{p,\infty}^{\phi}(\bR^d,w,\mathrm{d}x)},\quad \forall t\in[2^{j-1},2^j),j\geq-1.
\end{align*}
This certainly implies that
$$
	B_2 \lesssim \|f\|_{B_{p,q}^\phi(w)},\quad \forall p\in(1,\infty],q\in[1,\infty].
$$

For $X=F$, we proceed in a similar manner.
Note that
$$
\begin{gathered}
|\psi_t\ast f|\leq N\cM(S_0f),\quad \forall t\in(0,1/4),\\
|\psi_t\ast f|\leq N\cM((\Delta_{j-1}+\Delta_{j}+\Delta_{j+1}+\Delta_{j+2})f),\quad \forall t\in[2^{j-1},2^j), j\geq-1,
\end{gathered}
$$
Then for $q\in(1,\infty)$ we have
$$
\int_0^{1/4}\phi(t)^{q}\left|\psi_t\ast f\right|^q\frac{\mathrm{d}t}{t}\leq N\cM(S_0f)
$$
and
\begin{align*}
    \int_{1/4}^{\infty}\phi(t)^{q}\left|\psi_t\ast f\right|^q\frac{\mathrm{d}t}{t}
    &\leq \sum_{j=-1}^{\infty}\int_{2^{j-1}}^{2^j}\phi(t)^q\sum_{i=-1}^{2}|\cM(\Delta_{j+i}f)|^q\frac{\mathrm{d}t}{t}\\
    &\leq N\sum_{j=-2}^{\infty}\phi(2^j)^q|\cM(\Delta_{j}f)|^q.
\end{align*}
With the help of Lemma \ref{23.01.31.17.22},
$$
	F_2 \lesssim \|f\|_{F_{p,q}^{\phi}(w)}.
$$
The lemma is proved.
\end{proof}

\begin{lem}
\label{23.02.08.15.55}
$X_2 \simeq X_3$.
\end{lem}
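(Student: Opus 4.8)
Since the $\|f\|_{L_p(w)}$ terms are common to $X_2$ and $X_3$, it suffices to compare the two scale integrals; throughout recall $\psi^L_{1/t}=\sum_{j=1}^{L}\binom{L}{j}(-1)^{L-j}\psi_{1/(tj)}$.

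\textbf{The bound $X_3\lesssim X_2$.} Taking absolute values and applying the (quasi-)triangle inequality together with H\"older's inequality in the index $j$ gives $\|\psi^L_{1/t}\ast f\|_{L_p(w)}^q\lesssim_{L,q}\sum_{j=1}^{L}\|\psi_{1/(tj)}\ast f\|_{L_p(w)}^q$, and pointwise $|\psi^L_{1/t}\ast f|^q\lesssim_{L,q}\sum_{j}|\psi_{1/(tj)}\ast f|^q$ in the Triebel--Lizorkin case. Integrating against $\phi(t^{-1})^q\,\frac{dt}{t}$ and substituting $s=tj$ turns the $j$-th term into $\int_0^\infty\phi(js^{-1})^q\,\|\psi_{1/s}\ast f\|_{L_p(w)}^q\,\frac{ds}{s}$; since $\phi\in\cI_o(0,M)$ gives $\phi(js^{-1})\le s_\phi(j)\phi(s^{-1})$ with $s_\phi(j)\lesssim j^{M}\le L^{M}$ for $1\le j\le L$ (Proposition~\ref{23.05.25.13.12}$(v)$), the sum over the finitely many $j\le L$ is $\lesssim_L$ the scale integral of $X_2$. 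The case $q=\infty$ is the same with suprema.

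\textbf{The bound $X_2\lesssim X_3$: a reproducing formula for $\psi^L$.} Assume, as we may, that $\widehat\varphi$ is radial, so $\widehat{\psi^L}(\xi)=h(|\xi|)$ with $h\in C_c^\infty(\bR_+)$, $h\not\equiv0$ (near the outer boundary of $\operatorname{supp}\widehat\psi$ only the $j=1$ term survives); put $c_0:=\int_0^\infty h(u)^2\,\frac{du}{u}\in(0,\infty)$ and $\widehat\eta:=c_0^{-1}\widehat{\psi^L}$, so $\int_0^\infty\widehat{\psi^L}(t\xi)\,\widehat\eta(t\xi)\,\frac{dt}{t}=1$ for $\xi\neq0$. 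Multiplying this identity by $\widehat\psi(s\xi)\widehat f(\xi)$, which vanishes near $\xi=0$ so that the resulting identity is valid for \emph{all} $\xi$, yields
\[
\psi_{1/s}\ast f=\int_0^\infty K_{s,t}\ast(\psi^L_{1/t}\ast f)\,\frac{dt}{t},\qquad K_{s,t}:=\psi_{1/s}\ast\eta_{1/t}.
\]
Since $\widehat\psi$ and $\widehat\eta$ are supported in annuli, $\widehat{K_{s,t}}(\xi)=\widehat\psi(s\xi)\widehat\eta(t\xi)$ vanishes unless $t/s\in[c,C]$, and for such $s,t$ the kernel $K_{s,t}$ is, after rescaling by $s$, a fixed Schwartz bump, so $|K_{s,t}(y)|\lesssim_N s^{-d}(1+|y|/s)^{-N}$ and hence $|K_{s,t}\ast g(x)|\lesssim\cM g(x)$; in particular the above integral runs effectively over $t\in[cs,Cs]$ and converges absolutely in $L_p(w)$. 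Using $\|\cM g\|_{L_p(w)}\lesssim\|g\|_{L_p(w)}$ (as $w\in A_p$) and $\phi(s^{-1})\simeq\phi(t^{-1})$ when $t/s\in[c,C]$, this gives $\phi(s^{-1})\|\psi_{1/s}\ast f\|_{L_p(w)}\lesssim\int_{cs}^{Cs}\phi(t^{-1})\|\psi^L_{1/t}\ast f\|_{L_p(w)}\,\frac{dt}{t}$ for $X=B$, and the pointwise inequality $\phi(s^{-1})|\psi_{1/s}\ast f(x)|\lesssim\int_{cs}^{Cs}\phi(t^{-1})\cM(\psi^L_{1/t}\ast f)(x)\,\frac{dt}{t}$ for $X=F$.

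\textbf{Summing over scales.} For $X=B$, $q<\infty$: raise the last inequality to the power $q$, integrate $\frac{ds}{s}$, apply H\"older's inequality in the variable $t\in[cs,Cs]$ (a bounded logarithmic interval) and Fubini (the set $\{cs\le t\le Cs\}$ equals $\{t/C\le s\le t/c\}$), obtaining $X_2\lesssim X_3$; $q=\infty$ follows with suprema. For $X=F$, $1<q<\infty$: carry out this scale summation pointwise in $x$ to get $\big(\int_0^\infty[\phi(s^{-1})|\psi_{1/s}\ast f|]^q\frac{ds}{s}\big)^{1/q}\lesssim\big(\int_0^\infty[\phi(t^{-1})\cM(\psi^L_{1/t}\ast f)]^q\frac{dt}{t}\big)^{1/q}$ pointwise, then take $\|\cdot\|_{L_p(w)}$ and apply the weighted vector-valued Fefferman--Stein inequality for $L^p(w;L^q(\bR_+,\frac{dt}{t}))$ — the continuous analogue of Lemma~\ref{23.01.31.17.22}, obtained by discretizing the scale variable — to remove $\cM$. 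Restoring the common $\|f\|_{L_p(w)}$ terms gives $X_2\simeq X_3$. The most delicate points are the construction of the reproducing formula (if one does not wish to assume $\widehat\varphi$ radial, one instead picks $\eta$ with $\widehat\eta$ supported in an annulus and $\int_0^\infty\widehat{\psi^L}(t\xi)\widehat\eta(t\xi)\frac{dt}{t}\equiv1$, which is possible because $\widehat{\psi^L}$ vanishes on no ray from the origin, this being forced by the partition of unity $\sum_k\widehat\psi(2^{-k}\xi)=1$) and the almost-orthogonality bound for $K_{s,t}$; both are routine once $\widehat\psi$ is a genuine Littlewood--Paley bump.
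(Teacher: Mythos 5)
Your argument is correct. The direction $X_3\lesssim X_2$ is essentially the paper's own (expand the binomial sum, substitute $s=jt$, use submultiplicativity of $s_\phi$). For the harder direction $X_2\lesssim X_3$ you take a genuinely different route: a continuous Calder\'on reproducing formula $\int_0^\infty\widehat{\psi^L}(t\xi)\,\widehat\eta(t\xi)\,\frac{\mathrm{d}t}{t}=1$ built from an auxiliary normalized bump $\eta$, almost-orthogonality of $K_{s,t}=\psi_{1/s}\ast\eta_{1/t}$, and (for $X=F$) a continuous-parameter weighted Fefferman--Stein inequality. The paper instead exploits the exact identity $\sum_{k\in\bZ}\widehat{\psi}_t^L(2^k\xi)=\sum_{j=1}^L\binom{L}{j}(-1)^{L-j}=(-1)^{L+1}$, which is already a discrete reproducing formula: no auxiliary function or normalization constant is needed, $\psi_{1/t}\ast f$ is rewritten as $(-1)^{L+1}\psi_{1/t}$ convolved with finitely many $\psi^L$-blocks at comparable scales, and only the discrete vector-valued inequality of Lemma~\ref{23.01.31.17.22} is ever invoked (after packaging the $t$-integral into dyadic blocks, as in \eqref{ineq_230317_1503}). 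Your construction is the generic one, and your remark that radiality of $\widehat\varphi$ can be dispensed with because $\widehat{\psi^L}$ vanishes on no ray is right --- that nonvanishing is precisely the content of the displayed identity above, which also yields the lower bound $c(\omega)=\int_0^\infty\widehat{\psi^L}(t\omega)^2\,\frac{\mathrm{d}t}{t}>0$ your normalization requires. The one step you assert rather than prove is the continuous analogue of Lemma~\ref{23.01.31.17.22}; it does follow by Riemann-sum discretization plus Fatou, but since $\phi$ is not assumed continuous you should first replace $\phi$ on each dyadic block of scales by its supremum there (comparable to any of its values on the block by submultiplicativity of $s_\phi$) before discretizing. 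With that detail supplied, your proof is complete, at the cost of slightly more machinery than the paper's purely discrete argument.
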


\begin{proof}
Recall that 
\begin{align*}
	B_3 &= \|f\|_{L_p(w)} + \Big( \int_0^\infty \phi(t^{-1})^q \left\| \psi_{\frac{1}{t}}^L \ast f \right\|_{L_p(w)}^q ~\frac{\mathrm{d}t}{t}\Big)^{\frac{1}{q}},\\
	F_3 &= \|f\|_{L_p(w)} + \Big\| \Big( \int_0^\infty \phi(t^{-1})^q \left| \psi_{\frac{1}{t}}^L \ast f \right|^q ~\frac{\mathrm{d}t}{t}\Big)^{\frac{1}{q}}\Big\|_{L_p(w)},
\end{align*}
where 
\begin{align}\label{2307101129}
    \psi_{\frac{1}{t}}^L(y) = \sum_{j=1}^L \binom{L}{j} (-1)^{L-j} \psi_{\frac{1}{jt}}(y).
\end{align}

\begin{enumerate}
\item[$B_2\lesssim B_3$:]
Observe that
$$
	\sum_{k\in\bZ}\widehat{\psi}_{t}^L (2^{k}\xi) = \sum_{j=1}^{L} \binom{L}{j}(-1)^{L-j}=(-1)^{L+1},
$$
and
$$
	\supp(\widehat{\psi}_t)\subseteq \supp(\widehat{\psi}^L_t) = \{\xi\in\bR^d:(2L)^{-1}t\leq|\xi|\leq 2t\}.
$$
Thus we have
\begin{align}\label{ineq_230317_1501}
	\psi_{\frac{1}{t}}\ast f=(-1)^{L+1}\psi_{\frac{1}{t}}\ast(\psi_{\frac{2}{t}}^L + \psi_{\frac{1}{t}}^{L}+\psi_{\frac{1}{2t}}^{L})\ast f.
\end{align}
By Lemma \ref{22.12.28.17.28}, we also have
\begin{align}\label{ineq_230316_1609}
	|\psi_{\frac{1}{t}}\ast f|
	\lesssim \left(\left|\cM(\psi_{\frac{2}{t}}^{L}\ast f)\right|+ \left|\cM(\psi_{\frac{1}{t}}^{L}\ast f)\right|+\left|\cM(\psi_{\frac{1}{2t}}^{L}\ast f)\right|\right).
\end{align}
Using \eqref{ineq_230316_1609}, one can check that for $p\in(1,\infty]$ and $q\in[1,\infty]$
\begin{align*}
    \Big( \int_0^\infty \phi(t^{-1})^q \left\|\psi_{\frac{1}{t}}\ast f\right\|_{L_p(w)}^q ~\frac{\mathrm{d}t}{t} \Big)^{\frac{1}{q}}
    \lesssim \Big( \int_0^\infty \phi(t^{-1})^q \left\|\psi_{\frac{1}{t}}^L \ast f\right\|_{L_p(w)}^q ~\frac{\mathrm{d}t}{t} \Big)^{\frac{1}{q}}.
\end{align*}
This proves 
$$
	B_2 \lesssim B_3.
$$

\item[$F_2\lesssim F_3$:]

Note that for $2^{j-1} <t<2^j$
\begin{align}
	\widehat{\psi}_\frac{1}{t} 
	&= (-1)^{L+1} \widehat{\psi}_\frac{1}{t} \left( \widehat{\psi}_{2^{-j-2}}^L + \widehat{\psi}_{2^{-j-1}}^L +\widehat{\psi}_{2^{-j}}^L+\widehat{\psi}_{2^{-j+1}}^L + \widehat{\psi}_{2^{-j+2}}^L\right),\label{ineq_230317_1502}\\
	\psi_{2^j}^L &= N \psi_{2^j}^L \ast \left(\sum_{i=-2}^{2} \int_{2^{j-1}}^{2^j} \psi_{2^i t}^L ~\frac{\mathrm{d}t}{t}\right).\label{ineq_230317_1503}
\end{align}
The inequality \eqref{ineq_230317_1502} is nothing but \eqref{ineq_230317_1501}, 
and the inequality \eqref{ineq_230317_1503} is a variant of \eqref{ineq_230315_1543}.
Then by making use of \eqref{ineq_230317_1502}, we have
\begin{align*}
	\int_0^\infty \phi(t^{-1})^q | \psi_\frac{1}{t} \ast f|^q ~\frac{\mathrm{d}t}{t}
	&\lesssim \sum_{j\in\bZ} \phi(2^{-j})^q \left( \cM(\psi_{-j}^L \ast f)(x) \right)^q.
\end{align*}
Since $\cM$ is bounded on $L_p(w)$, we apply Lemma~\ref{23.01.31.17.22} and obtain
\begin{equation}\label{ineq_230317_1511}
\begin{aligned}
	\Big\| \Big( \int_0^\infty \phi(t^{-1})^q | \psi_\frac{1}{t} \ast f|^q ~\frac{\mathrm{d}t}{t} \Big)^{\frac{1}{q}} \Big\|_{L_p(w)}
	&\lesssim  \Big\| \Big( \sum_{j\in\bZ} \phi(2^{-j})^q \left|\psi_{2^{-j}}^L \ast f\right|^q \Big)^\frac{1}{q} \Big\|_{L_p(w)}\\
	&= \Big\| \Big( \sum_{j\in\bZ} \phi(2^{j})^q \left|\psi_{2^{j}}^L \ast f\right|^q \Big)^\frac{1}{q} \Big\|_{L_p(w)}.
\end{aligned}
\end{equation}
Then we apply \eqref{ineq_230317_1503} to the last line of \eqref{ineq_230317_1511}.
\begin{equation}\label{ineq_230317_1517}
\begin{aligned}
	 &\Big\| \Big( \sum_{j\in\bZ} \phi(2^{j})^q \left|\psi_{2^{j}}^L \ast f\right|^q \Big)^\frac{1}{q} \Big\|_{L_p(w)}\\
	 &= \Big\| \Big( \sum_{j\in\bZ} \phi(2^{j})^q \Big|  \psi_{2^{j}
  }^L \ast \sum_{-2\leq i\leq 2} \int_{2^{j-1}}^{2^j} \psi_{2^i t}^L\ast f ~\frac{\mathrm{d}t}{t} \Big|^q \Big)^\frac{1}{q} \Big\|_{L_p(w)}\\
	 &\lesssim  \Big\| \Big( \sum_{j\in\bZ} \phi(2^{j})^q \Big|  \cM\Big(  \int_{2^{j-1}}^{2^j} \psi_{t}^L\ast f ~\frac{\mathrm{d}t}{t}\Big) \Big|^q \Big)^\frac{1}{q} \Big\|_{L_p(w)}.
  \end{aligned}
\end{equation}
By Lemma~\ref{23.01.31.17.22} again to the last line of \eqref{ineq_230317_1517}, it follows that
\begin{equation}\label{ineq_230317_1525}
\begin{aligned}
	&\Big\| \Big( \sum_{j\in\bZ} \phi(2^{j})^q \Big|  \cM\Big(  \int_{2^{j-1}}^{2^j} \psi_{t}^L\ast f ~\frac{\mathrm{d}t}{t}\Big) \Big|^q \Big)^\frac{1}{q} \Big\|_{L_p(w)}\\
	 &\lesssim \Big\| \Big( \sum_{j\in\bZ} \phi(2^{j})^q \Big|  \int_{2^{j-1}}^{2^j} \psi_{t}^L\ast f ~\frac{\mathrm{d}t}{t}\Big|^q \Big)^\frac{1}{q} \Big\|_{L_p(w)}\\
	 &\leq \Big\| \Big( \sum_{j\in\bZ} \int_{2^{j-1}}^{2^j} \phi(t)^q  \left| \psi_{t}^L\ast f \right|^q~\frac{\mathrm{d}t}{t} \Big)^\frac{1}{q} \Big\|_{L_p(w)}\\
	 &=  \Big\| \Big( \int_0^\infty \phi(t^{-1})^q  \left| \psi_{\frac{1}{t}}^L\ast f \right|^q~\frac{\mathrm{d}t}{t} \Big)^\frac{1}{q} \Big\|_{L_p(w)},
\end{aligned}
\end{equation}
where the last equality follows from summation over $j\in\bZ$ and the change of variable $t\to t^{-1}$.
Combining \eqref{ineq_230317_1511}, \eqref{ineq_230317_1517}, \eqref{ineq_230317_1525}, we conclude
$$
	F_2 \lesssim F_3.
$$
\end{enumerate}

The converse direction, verifying $X_3 \lesssim X_2$, is rather simpler than showing $X_2 \lesssim X_3$.
Let $|f|_X$ denote $|f|_B$, $|f|_F$ with $|f|_B = \|f\|_{L_p(w)}$ and  $|f|_F = |f(x)|$.
Then we have
\begin{align*}
\int_0^\infty \phi(t^{-1})^q |\psi_{\frac{1}{t}}^L \ast f |_X^q ~\frac{\mathrm{d}t}{t}
&\leq \sum_{j=1}^L \binom{L}{j} \int_0^\infty \phi(t^{-1})^q |\psi_{\frac{1}{jt}} \ast f |_X^q ~\frac{\mathrm{d}t}{t}\\
&\lesssim_L \int_0^\infty \phi(t^{-1})^q |\psi_{\frac{1}{t}} \ast f |_X^q ~\frac{\mathrm{d}t}{t}.
\end{align*}
Hence, we easily obtain $X_3 \lesssim X_2$.
The lemma is proved.

\end{proof}

\begin{lem}
\label{23.02.09.14.43}
$\|f\|_{X_{p,q}^{\phi}(w)}\simeq X_4, X_5$.
\end{lem}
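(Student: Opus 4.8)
\textbf{Proof plan for Lemma~\ref{23.02.09.14.43}.}
The plan is to establish the chain
\[
\|f\|_{X_{p,q}^{\phi}(w)}\lesssim X_4\lesssim X_5\lesssim\|f\|_{X_{p,q}^{\phi}(w)},
\]
fixing throughout some $r\in(0,R_{w,p})$, chosen in addition with $r<q$ when $X=F$; this is possible since $R_{w,p}>1$ (and $q>1$ in the Triebel--Lizorkin case). Then $w\in A_{p/r}(\bR^d)$ by the definition of $R_{w,p}$ (and monotonicity of the $A_p$ scale), so $\cM$ is bounded on $L_{p/r}(w)$ by \eqref{2306111001}. The leftmost inequality is immediate from the pointwise bounds $|\Delta_jf|\le M_{j,d/r}^{*}f$ and $|S_0f|\le M_{d/r}^{0,*}f$ (take $y=0$ in the defining suprema); for $X=F$ one also writes $f=S_0f+\sum_{j\ge1}\Delta_jf$ and applies H\"older's inequality in $j$, using the polynomial growth of $\phi$ (Proposition~\ref{23.05.25.13.12}$(v)$), to dominate $\|f\|_{L_p(w)}$. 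For the middle inequality $X_4\lesssim X_5$, the $j\ge1$ part of $X_4$ is majorized by the full $j\in\bZ$ sum occurring in $X_5$, while $\|M_{d/r}^{0,*}f\|_{L_p(w)}\lesssim\|S_0f\|_{L_p(w)}\lesssim\|f\|_{L_p(w)}$ by Lemma~\ref{22.12.28.17.28}$(iii)$, $(v)$ together with the boundedness of $\cM$ on $L_{p/r}(w)$ and on $L_p(w)$.

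The substance lies in $X_5\lesssim\|f\|_{X_{p,q}^{\phi}(w)}$. The summand $\|f\|_{L_p(w)}$ of $X_5$ is controlled by Lemma~\ref{Lp_bound}, so it remains to bound the $\ell^q$/$L_p$ piece built from $\{M_{j,d/r}^{*}f\}_{j\in\bZ}$, which we split into the ranges $j\ge1$ and $j\le0$ (with the obvious $\sup$-modification when $q=\infty$). On $j\ge1$ we invoke Lemma~\ref{22.12.28.17.28}$(iii)$, $M_{j,d/r}^{*}f\lesssim(\cM(|\Delta_jf|^{r}))^{1/r}$. For $X=B$ this gives $\|M_{j,d/r}^{*}f\|_{L_p(w)}\lesssim\|\Delta_jf\|_{L_p(w)}$ by the $L_{p/r}(w)$-boundedness of $\cM$, and summing against $\phi(2^j)^q$ yields the bound by $\|f\|_{B_{p,q}^{\phi}(w)}$. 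For $X=F$, set $g_j:=\phi(2^j)^{r}|\Delta_jf|^{r}$, so that $\sum_{j\ge1}\phi(2^j)^q|M_{j,d/r}^{*}f|^q\lesssim\sum_{j\ge1}(\cM g_j)^{q/r}$; then Lemma~\ref{23.01.31.17.22}, applied with $T_j=\cM$, exponents $p/r,q/r\in(1,\infty)$ and weight $w\in A_{p/r}$, replaces each $\cM g_j$ by $g_j$ and produces the bound by $\|f\|_{F_{p,q}^{\phi}(w)}$.

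On $j\le0$ we argue crudely. Since $f\in L_p(w)\subset L_{1,\mathrm{loc}}(\bR^d)$, Lemma~\ref{22.12.28.17.28}$(v)$ gives $|\Delta_jf|\lesssim\cM f$, hence $M_{j,d/r}^{*}f\lesssim(\cM(|\Delta_jf|^{r}))^{1/r}\lesssim(\cM((\cM f)^{r}))^{1/r}$ uniformly in $j\le0$; and since $\phi\in\cI_o(0,M)$, Proposition~\ref{23.05.25.13.12}$(v)$ furnishes $\varepsilon>0$ with $\phi(2^j)\lesssim2^{\varepsilon j}$ for $j\le0$, so $\sum_{j\le0}\phi(2^j)^q<\infty$ (resp. $\sup_{j\le0}\phi(2^j)<\infty$ when $q=\infty$). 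Consequently the $j\le0$ contribution to $X_5$ is at most a constant multiple of $\|(\cM((\cM f)^{r}))^{1/r}\|_{L_p(w)}\lesssim\|\cM f\|_{L_p(w)}\lesssim\|f\|_{L_p(w)}\lesssim\|f\|_{X_{p,q}^{\phi}(w)}$, using the $L_{p/r}(w)$- and $L_p(w)$-boundedness of $\cM$ and then Lemma~\ref{Lp_bound}. Assembling the two ranges completes the chain, and the four $\simeq$'s of the lemma follow.

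I expect the only genuinely delicate point to be the $j\ge1$ estimate in the Triebel--Lizorkin case: it rests on the weighted vector-valued maximal inequality (Lemma~\ref{23.01.31.17.22}) applied after rescaling the integrability exponents by the factor $r$, which is exactly what forces $r<\min(q,R_{w,p})$. All remaining steps merely combine the pointwise Peetre estimates of Lemma~\ref{22.12.28.17.28}, the scalar weighted maximal bound \eqref{2306111001}, the embedding of Lemma~\ref{Lp_bound}, and the decay of $\phi$ at the origin.
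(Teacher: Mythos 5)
Your argument is correct and follows essentially the paper's own proof: the lower bound is the trivial pointwise domination $|\Delta_j f|\le M^{*}_{j,d/r}f$, $|S_0f|\le M^{0,*}_{d/r}f$, and the upper bound combines Lemma~\ref{22.12.28.17.28}~(iii) with the weighted scalar maximal bound \eqref{2306111001} for $X=B$ and with the vector-valued inequality of Lemma~\ref{23.01.31.17.22} for $X=F$; you merely write out the details for $X_5$ and the range $j\le0$ that the paper dismisses ``due to similarity.'' The one substantive point where you diverge is the explicit restriction $r<q$ in the Triebel--Lizorkin case: it is genuinely required to invoke Lemma~\ref{23.01.31.17.22} with rescaled exponents $p/r,\,q/r\in(1,\infty)$, whereas the paper states $F_4,F_5$ for all $r\in(0,R_{w,p})$ and applies the same lemma without comment, so your version is the more careful one (for $X=B$ your argument, like the paper's, covers the full range $r\in(0,R_{w,p})$).
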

\begin{proof}
Due to similarity, we only prove for $X_4$.
Recall that
\begin{align}
	B_4 &=  \|M_{d/r}^{0,*}f\|_{L_p(w)} + \left( \sum_{j\geq1} \phi(2^j)^q \| M_{j,d/r}^{*}f \|_{L_p(w)}^q\right)^{1/q},\\
	F_4 &= \|M_{d/r}^{0,*}f\|_{L_p(w)} + \left\| \left( \sum_{j\geq1} \phi(2^j)^q | M_{j,d/r}^{*}f |^q\right)^{1/q} \right\|_{L_p(w)}.
\end{align}
By Lemma \ref{22.12.28.17.28} ($2$),
$$
	|\Delta_jf(x)|\leq M_{j,d/r}^{*}f(x),\quad |S_0f(x)|\leq M_{d/r}^{0,*}f(x)\quad \text{for all $x\in\bR^d$}
$$
and this certainly implies that
$$
	\|f\|_{X_{p,q}^{\phi}(w)}\lesssim X_4.
$$

For the converse inequality, note that $w\in A_p(\bR^d)$, for $r\in(0,R_{w,p})$, the weight $w\in A_{p/r}(\bR^d)$. 
Then we apply Lemma \ref{22.12.28.17.28} ($3$) to obtain
\begin{align*}
    \|f\|_{X_{p,q}^{\phi,4}(w)}
    \lesssim &\|\cM(|S_0f|^{r})^{1/r}\|_{L_p(w)}
    \\
    &\quad+\left\{ 
    \begin{array}{ll}
        \left( \sum_{j\geq1} \phi(2^j)^q \| \cM(|\Delta_j f|^r)^{\frac{1}{r}} \|_{L_p(w)}^q\right)^{1/q}, &X=B\\
        \left\| \left( \sum_{j\geq1} \phi(2^j)^q | \cM(|\Delta_j f|^r)^{\frac{1}{r}} |^q\right)^{1/q} \right\|_{L_p(w)}, &X=F.
    \end{array}
    \right.
\end{align*}
Thus we have 
$$
	X_4 \lesssim \|f\|_{X_{p,q}^{\phi}(w)}.
$$
The lemma is proved.
\end{proof}

Finally, we prove Theorem~\ref{homo} and Proposition~\ref{poly}.
\begin{proof}[Proof of Theorem \ref{homo}]
By similarity, we only provide proof for the homogeneous Besov case.
It directly follows from the proof of Lemmas~\ref{23.02.08.15.54} and \ref{23.02.08.15.55} that
$$
\|f\|_{\mathring{B}_{p,q}^\phi(\bR^d,w\dd x)}^q\simeq  \int_0^\infty \phi(t^{-1})^q \left\| \psi_{\frac{1}{t}} \ast f \right\|_{L_p(w)}^q ~\frac{\mathrm{d}t}{t}\simeq  \int_0^\infty \phi(t^{-1})^q \left\| \psi^L_{\frac{1}{t}} \ast f \right\|_{L_p(w)}^q ~\frac{\mathrm{d}t}{t}\,,
$$
where $\psi_{1/t}^L$ is the function defined in \eqref{2307101129}.
Due to the proof of \eqref{230611958}, we have
\begin{align*}
 \int_0^\infty \phi(t^{-1})^q \left\| \psi^L_{\frac{1}{t}} \ast f \right\|_{L_p(w)}^q ~\frac{\mathrm{d}t}{t}\,&\lesssim \int_{\bR^d}\bigg(\int_{0}^{\infty}\phi(t^{-1})^q|\psi_{1/t}(y)|\frac{\mathrm{d}t}{t}\bigg)\big\|\cD_y^Lf\big\|_{L_p(w)}^q\dd y\\
&\lesssim \int_{\bR^d}\phi(|y|^{-1})^q\big\|\cD_y^Lf\big\|_{L_p(w)}^q\frac{\dd y}{|y|^d}=\|f\|_{\mathring{\cB}_{p,q}^\phi(\bR^d,w\dd x)}^q.
\end{align*}
Therefore, the estimate
$\|f\|_{\mathring{B}_{p,q}^\phi(w)}\lesssim \|f\|_{\mathring{\cB}_{p,q}^\phi(w)}$
is obtained.
Due to \eqref{2307101154}, 
$$
\|f\|_{\mathring{\cB}_0}\lesssim \|f\|_{\mathring{B}_{p,q}^\phi(w)}\,,
$$
where 
$$
\|f\|_{\mathring{\cB}_0}:=
\begin{cases}
    \left(\int_{\bR^d}\phi(|h|^{-1})^q\sup_{|\rho|\leq|h|}\|\cD^L_{\rho}f\|_{L_p(w)}^q\frac{\mathrm{d}h}{|h|^d}\right)^{1/q},\quad &q<\infty,\\
    \sup_{h\in\bR^d}\phi(|h|^{-1})\sup_{|\rho|\leq|h|}\|\cD^L_{\rho}f\|_{L_p(w)},\quad &q=\infty.
\end{cases}
$$
Since
$$
\|f\|_{\mathring{\cB}_{p,q}^\phi(w)}\lesssim \|f\|_{\mathring{\cB}_0},
$$
we also have the converse estimate.
The theorem is proved.
\end{proof}

We end this section with the proof of Proposition \ref{poly}.

\begin{proof}[Proof of Proposition \ref{poly}]
We first observe that for any multi-index $\alpha\in\big(\bN_0\big)^d$ and $x,\,h\in\bR^d$, 
\begin{align}\label{230714604}
\cD^L_h(x^\alpha):=\sum_{k=0}^{L}\binom{L}{k}(-1)^{L-k}(x+kh)^\alpha=
\begin{cases}
0 &\text{if}\quad |\alpha|< L\\[2mm]
\sum\limits_{\substack{\beta:L\leq |\beta|,\,\beta\leq \alpha}}c_{\alpha,\beta}x^{\alpha-\beta}h^\beta &\text{if}\quad |\alpha|\geq L\,,
\end{cases}
\end{align}
for some nonzero constants $c_{\alpha,\beta}$.
Here, for $\alpha=(\alpha_1,\ldots,\alpha_d)$ and $\beta=(\beta_1,\ldots,\beta_d)$, $\beta\leq \alpha$ denotes that $\beta_i\leq \alpha_i$ for all $i=1,\,\ldots,\,d$.
Indeed,
$$
\cD_h^L(x^\alpha)=\sum_{\beta:\beta\leq \alpha}\left(\sum_{k=0}^L\binom{L}{k}(-1)^{L-k}k^{|\beta|}\right)\binom{\alpha}{\beta}x^{\alpha-\beta}h^\beta,\quad \text{where}\,\,\,\,\binom{\alpha}{\beta}:=\prod_{i=1}^d\binom{\alpha_i}{\beta_i}\,,
$$
and
\begin{align*}
\sum_{k=0}^L\binom{L}{k}(-1)^{L-k}k^{n}=\left[\left(\frac{\mathrm{d}}{\mathrm{d}t}\right)^n\left(\mathrm{e}^t-1\right)^L\right]_{t=0}=
\begin{cases}
    0 &\text{if}\quad n<L\\[2mm]
    \text{positive constant}&\text{if}\quad n\geq L\,.
\end{cases}
\end{align*}
It follows from \eqref{230714604} that if $P$ is a polynomial with $\mathrm{deg}\,P<L$, then $\cD_h^LP\equiv 0$ for all $h\in\bR^d$, which implies the first assertion in \eqref{230714638}.
For the second assertion in \eqref{230714638}, due to similarity, we consider only the case $\mathring{\mathscr{X}}_{p,q}^\phi(\bR^d)=\mathring{\mathscr{B}}_{p,q}^\phi(\bR^d)$.
Let $\mathrm{deg}\,P=:M\geq L$, then \eqref{230714604} implies that
$$
\cD_h^LP(x)=\sum_{i=L}^{M}Q_i(x,h)
$$
where $Q_i(x,h)$ is a polynomial satisfying that
$$
Q_i(x,h)=|h|^iQ_i\big(x,h/|h|\big)\quad\forall\,\, x,\,h\in\bR^d\qquad ;\qquad Q_M\not\equiv 0\,.
$$
Take $(x_0,\sigma_0)\in\bR^d\times \bS^{d-1}$ and $\delta>0$ such that
$$
\inf_{(x,\sigma)\in E_1\times E_2 }|Q_M(x,\sigma)|=:2\epsilon_0>0\,,
$$
where $E_1=B_\delta(x_0)$ and $E_2=B_\delta(\sigma_0)\cap \bS^{d-1}$.
Put 
$$
N_0=\sup_{(x,\sigma)\in E_1\times E_2}\sum_{i=L}^{M-1}|Q_i(x,\sigma)|\quad\text{and}\quad R=:\frac{N_0}{\epsilon_0}\vee 1\,.
$$
Then for any $x\in E_1$, $\sigma\in E_2$, and $r\geq R$,
$$
|\cD_{r\sigma}^{L}P(x)|\geq r^M|Q_M(x,\sigma)|-\sum_{i=L}^{M-1}r^i|Q_i(x,\sigma)|\geq r^{M}\bigg(2\epsilon_0-\frac{N_0}{R}\bigg)\geq \epsilon_0 r^M\,.
$$
Consequently, we obtain that 
\begin{equation}\label{230714725}
\begin{aligned}
\|P\|_{\mathring{\mathscr{B}}_{p,q}^{\phi}(\bR^d,w\,\mathrm{d}x)}&:=\left\|\phi(|h|^{-1})\left\|\cD_h^LP(x)\right\|_{L_p(w)}\right\|_{L_q\left(\bR^d;\frac{\mathrm{d}h}{|h|^d}\right)}\\
&\geq \left\|\phi(|h|^{-1})\left\|\cD_{r\sigma}^LP(x)\right\|_{L_p(E_1;w\mathrm{d}x)}\right\|_{L_q\left(E_2\times [R,\infty);\frac{\mathrm{d}h}{|h|^d}\right)}\\
&\gtrsim\left(w(E_1)\right)^{1/p} \left\|\phi(r^{-1})r^M\right\|_{L_q\left([R,\infty);\frac{\mathrm{d}r}{r}\right)}\,.
\end{aligned}
\end{equation}
Since $\phi\in\cI_o(0,L)$, there exists $\epsilon>0$ such that for any $r\in [R,\infty)$, $r^M\phi(r^{-1})\geq r^L\phi(r^{-1})\gtrsim r^{\epsilon}$ (see \eqref{w scaling}).
In addition, $w(E_1)>0$ because $w>0$ a.e. on $\bR^d$.
Therefore, the last term in \eqref{230714725} is infinite.
The proof is completed.
\end{proof}

\mysection{Applications}\label{sec_apps}
In this section, we explore the applications of our main results.
We begin by introducing a notation.
The space $H_p^{\gamma}(w)$ denotes the set of tempered distributions $f$ satisfying $(1-\Delta)^{\gamma/2}f\in L_p(w)$. When $w\equiv 1$, we omit the notation $w$.

\subsection{Regularity theory for evolution equations}
Our results broaden the understanding of weighted Sobolev regularity theory for evolution equations as providing characterizations on optimal trace spaces.
In this subsection, we offer some remarks into the weighted $L_p$-regularity theory for non-divergence form equations with initial data, as discussed in the research by \cite{CKL2023regularity}, \cite{Dong_Kim2021}, and \cite{Dong_Liu2022}.
Specifically, we focus on the equation:
\begin{equation}
\label{eq:non_divergence}
\partial_t^{\alpha}u(t,x)=a^{ij}(t,x)u_{x^ix^j}(t,x), \quad (t,x)\in(0,T)\times\bR^d,
\end{equation}
Here, $\partial_t^{\alpha}$ is the Caputo fractional derivative of order $\alpha$.
When $\alpha=1$ and $a^{ij}(t,x)=a^{ij}(t)$, J.-H. Choi, I. Kim and J. B. Lee \cite{CKL2023regularity} studied the regularity theory for equation \eqref{eq:non_divergence} with $B$-spaces.
With help of Theorem \ref{thm_lp_diff_equiv}, we also have the regularity theory for equation \eqref{eq:non_divergence} with $\mathscr{B}$-spaces:
\begin{thm}
If $\gamma\in[0,\infty)$, $p,q\in(1,\infty)$, $w\in A_p(\bR^d)$, $\tilde{w}\in A_{q}(\bR)$, and
$$
\phi(\lambda):=\lambda^{\gamma+2}\left(\int_{0}^{\lambda^{-2}}\tilde{w}(t)\mathrm{d}t\right)^{1/q}\in\cI_o\left(\frac{d}{p}\left(R_w+\frac{1}{\Gamma_w}-1\right),M\right)
$$
for some positive integer $M > \frac{d}{p}\left(R_w+\frac{1}{\Gamma_w}-1\right)$, then for a given initial data $u_0\in\mathscr{B}_{p,q}^{\phi}(w)$, there exists a unique solution $u\in L_q((0,T),\tilde{w}\,\mathrm{d}t;H_p^{\gamma+2}(w))$ to the equation
\begin{align*}
\begin{cases}
\partial_tu(t,x)=\Delta u(t,x),\quad &(t,x)\in(0,T)\times\bR^d,\\
u(0,x)=u_0(x),\quad &x\in\bR^d
\end{cases}
\end{align*}
with the estimate
\begin{equation*} \left(\int_{0}^T\left(\|u(t,\cdot)\|_{H_p^{\gamma}(w)}+\|u_{xx}(t,\cdot)\|_{H_p^{\gamma}(w)}+\|\partial_tu\|_{H_p^{\gamma}(w)}\right)^q\tilde{w}(t)\mathrm{d}t\right)^{1/q}\lesssim\|u_0\|_{\mathscr{B}_{p,q}^{\phi}(w)}.
\end{equation*}
Moreover,
\begin{equation*} \left(\int_{0}^T\left(\|u_{xx}(t,\cdot)\|_{H_p^{\gamma}(w)}+\|\partial_tu\|_{H_p^{\gamma}(w)}\right)^q\tilde{w}(t)\mathrm{d}t\right)^{1/q}\lesssim\|u_0\|_{\mathring{\mathscr{B}}_{p,q}^{\phi}(w)}.
\end{equation*}
\end{thm}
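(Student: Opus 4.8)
The plan is to obtain this theorem as a \emph{transcription} of the weighted $L_p$-regularity theory of \cite{CKL2023regularity}, using Theorems~\ref{thm_lp_diff_equiv} and \ref{homo} to replace the initial-data space used there by $\mathscr{B}_{p,q}^{\phi}(w)$ and $\mathring{\mathscr{B}}_{p,q}^{\phi}(w)$. For $\alpha=1$ and $a^{ij}=\delta^{ij}$ the candidate solution is $u(t,\cdot)=e^{t\Delta}u_0$, and \cite{CKL2023regularity} already supplies its existence, its uniqueness in $L_q((0,T),\tilde w\,\mathrm{d}t;H_p^{\gamma+2}(w))$, and the two a priori bounds — but with the initial datum measured in a trace-type norm built from the heat semigroup. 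Since the solution space is literally unchanged, only the right-hand sides of the two displayed inequalities need to be matched, and for this it suffices to show that the \cite{CKL2023regularity}-norm is one of the equivalent norms of $B_{p,q}^{\phi}(w)$ (respectively that its homogeneous analogue controls $\|u_0\|_{\mathring{B}_{p,q}^{\phi}(w)}$) for the stated $\phi$.

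First I would record the weighted Littlewood--Paley computation behind this identification. Writing $\widehat{e^{t\Delta}\Delta_j u_0}(\xi)=e^{-t|\xi|^2}\widehat{\psi}_j(\xi)\widehat{u_0}(\xi)$, one detaches the semigroup from $\Delta_j u_0$ through the pointwise bound $|e^{t\Delta}\Delta_j g|\lesssim\cM(\Delta_j g)$ from Lemma~\ref{22.12.28.17.28} and the vector-valued maximal inequality of Lemma~\ref{23.01.31.17.22}, combined with $\|(1-\Delta)^{\gamma/2}\Delta\Delta_j g\|_{L_p(w)}\simeq 2^{(\gamma+2)j}\|\Delta_j g\|_{L_p(w)}$ for $j\ge1$ and the analogous low-frequency estimate for $j\le0$. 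Because the heat factor $e^{-t|\xi|^2}$ concentrates the $t$-integral on the diffusion scale $t\lesssim 2^{-2j}$, this yields
\[
\int_0^T\Big(\|u_{xx}(t,\cdot)\|_{H_p^{\gamma}(w)}+\|\partial_t u(t,\cdot)\|_{H_p^{\gamma}(w)}\Big)^q\,\tilde w(t)\,\mathrm{d}t\ \lesssim\ \sum_{j\in\bZ}\phi(2^j)^q\,\|\Delta_j u_0\|_{L_p(w)}^q\ =\ \|u_0\|_{\mathring{B}_{p,q}^{\phi}(w)}^q,
\]
precisely because $\phi(2^j)^q=2^{(\gamma+2)jq}\int_0^{2^{-2j}}\tilde w(t)\,\mathrm{d}t$; incorporating the zeroth-order term $\|u(t,\cdot)\|_{H_p^{\gamma}(w)}$ then upgrades $\mathring{B}_{p,q}^{\phi}(w)$ to the inhomogeneous norm $B_{p,q}^{\phi}(w)$. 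Here $\tilde w\in A_q(\bR)$ is used both to make $r\mapsto\int_0^r\tilde w$ doubling (legitimizing the dyadic discretization) and, via Proposition~\ref{23.04.21.16.12}, to guarantee that the resulting $\phi$ lies in the class $\cI_o$ assumed in the statement.

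Finally I would invoke Theorem~\ref{thm_lp_diff_equiv}-$(i)$ with $L:=M$ to get $B_{p,q}^{\phi}(w)=\mathscr{B}_{p,q}^{\phi}(w)$ with equivalent norms, and Theorem~\ref{homo} to get $\|u_0\|_{\mathring{B}_{p,q}^{\phi}(w)}\simeq\|u_0\|_{\mathring{\mathscr{B}}_{p,q}^{\phi}(w)}$; inserting these into the previous paragraph produces the two displayed estimates, while existence and uniqueness of $u$ are inherited verbatim from \cite{CKL2023regularity}. I expect the main obstacle to be the weighted Littlewood--Paley step: one has to juggle the spatial weight $w\in A_p(\bR^d)$ against the temporal weight $\tilde w\in A_q(\bR)$ — applying Lemmas~\ref{22.12.28.17.28} and \ref{23.01.31.17.22} to move the heat semigroup off $\Delta_j u_0$ and Proposition~\ref{23.04.21.16.12} to confirm $\phi\in\cI_o$ — but since this is exactly the estimate already present, in one of its equivalent forms, in \cite{CKL2023regularity}, the genuinely new ingredient is only the norm identification via Theorems~\ref{thm_lp_diff_equiv} and \ref{homo}.
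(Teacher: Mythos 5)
Your proposal is correct and follows essentially the same route as the paper: the paper offers no separate argument for this theorem, simply observing that the existence, uniqueness, and both a priori estimates are already established in \cite{CKL2023regularity} with the initial datum measured in the Littlewood--Paley norm $B_{p,q}^{\phi}(w)$ (resp.\ $\mathring{B}_{p,q}^{\phi}(w)$), and that Theorem~\ref{thm_lp_diff_equiv}-$(i)$ and Theorem~\ref{homo} convert these norms into $\|u_0\|_{\mathscr{B}_{p,q}^{\phi}(w)}$ and $\|u_0\|_{\mathring{\mathscr{B}}_{p,q}^{\phi}(w)}$. Your additional sketch of the weighted Littlewood--Paley computation behind the \cite{CKL2023regularity} estimate (including the identity $\phi(2^j)^q=2^{(\gamma+2)jq}\int_0^{2^{-2j}}\tilde w(t)\,\mathrm{d}t$) is consistent with that reference and does not change the structure of the argument.
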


The works \cite{Dong_Kim2021,Dong_Liu2022} mainly develop a unified weighted Sobolev regularity theory for parabolic equations with (partially) $VMO$-type coefficients under the zero-initial condition, and briefly also address the nonzero-initial condition with power-type temporal weights.
However, the initial data spaces introduced in \cite{Dong_Kim2021,Dong_Liu2022} are strictly smaller than the trace spaces associated with \eqref{eq:non_divergence}, as we shall see in the next paragraph. 
This shows that their regularity result does not fully encompass all admissible initial data and, therefore, is not optimal from the viewpoint of trace theory.
By employing our new characterizations of weighted Besov spaces, we precisely identify the corresponding trace spaces and thereby establish an optimal weighted Sobolev regularity theory that both extends and sharpens the previous results.

To explain this in detail, we first recall the results on nonzero-initial data from \cite{Dong_Kim2021,Dong_Liu2022}.
For $\alpha<1$, \cite[Lemma 5.7]{Dong_Kim2021} shows that under certain conditions (where $p,q\in(1,\infty)$, $\mu\in(-1,q-1)$, $\alpha\in(\frac{1+\mu}{q},1]$, $w\in A_p(\bR^d)$, and $2-2(1+\mu)/(\alpha q)\in(0,2)\setminus\{1\}$), if the initial data $u_0$ satisfies
\begin{equation}
\label{eq:init_data_condition}
\begin{aligned}
&\|u_0\|_{\mathrm{B}_{p,q}^{2-\frac{2(1+\mu)}{\alpha q}}(\mathbb{R}^d,w\,\mathrm{d}x)}\\
:=&\sum_{|\beta|\leq k}\|D^{\beta}u_0\|_{L_p(w)}+\left(\int_{\bR^d}\frac{\|D^{k}u_0(\cdot+h)-D^{k}u_0\|_{L_p(w)}^q}{|h|^{d+q(2-2(1+\mu)/(\alpha q)-k)}}\mathrm{d}h\right)^{1/q}<\infty,
\end{aligned}
\end{equation}
where $k:=\left\lfloor2-2(1+\mu)/(\alpha q)\right\rfloor$,
then there exists a unique solution $u\in L_q((0,T),t^{\mu}\,\mathrm{d}t;H_p^2(w))$ to \eqref{eq:non_divergence} with $\partial_t^{\alpha}u\in L_q((0,T),t^{\mu}\,\mathrm{d}t;L_p(w))$.
Later, \cite[Appendix C]{Dong_Liu2022} employed a similar approach to treat non-zero initial data problems for time-fractional diffusion-wave equations with $\alpha\in(1+\frac{1+\mu}{q},2)$.
It can be verified that
$$
\|u_0\|_{\mathrm{B}_{p,q}^{2-\frac{2(1+\mu)}{\alpha q}}(\mathbb{R}^d,w\,\mathrm{d}x)}=
\begin{cases}
    \|u_0\|_{\mathscr{B}_{p,q}^{2-\frac{2(1+\mu)}{\alpha q}}(\mathbb{R}^d,w\,\mathrm{d}x)}, &\text{if }\,2-\frac{2(1+\mu)}{\alpha q}\in(0,1),\\
    \|u_0\|_{L_p(w)}+\|\nabla u_0\|_{\mathscr{B}_{p,q}^{1-\frac{2(1+\mu)}{\alpha q}}(\mathbb{R}^d,w\,\mathrm{d}x)}, &\text{if }\,2-\frac{2(1+\mu)}{\alpha q}\in(1,2).
\end{cases}
$$

On the other hand, D. Kim and K. Woo \cite{Kim_Woo2023} established the trace theorem for \eqref{eq:non_divergence}, where they defined the trace space as the weighted Besov space:
\begin{equation}
\label{25.11.03.11.51}
\text{Trace space for \eqref{eq:non_divergence}}=B_{p,q}^{\phi}(\bR^d,w\,\mathrm{d}x),\quad \phi(\lambda):=\lambda^{2-\frac{2(1+\mu)}{\alpha q}}.
\end{equation}
According to Remark \ref{rem:mainthm}, if the weight $w\in A_p(\mathbb{R}^d)$ satisfies
\begin{equation}
\label{25.11.03.19.38}
2-\frac{2(1+\mu)}{\alpha q}\leq \frac{d}{p}\left(R_w+\frac{1}{\Gamma_w}-1\right)<1,
\end{equation}
then 
$$
\mathrm{B}_{p,q}^{2-\frac{2(1+\mu)}{\alpha q}}(\mathbb{R}^d,w\,\mathrm{d}x)=\mathscr{B}_{p,q}^{\phi}(\mathbb{R}^d, w\,\mathrm{d}x)\subsetneq B_{p,q}^{\phi}(\mathbb{R}^d,w\,\mathrm{d}x)=\text{Trace space for \eqref{eq:non_divergence}}.
$$
Therefore, the $\mathrm{B}$-type spaces in \eqref{eq:init_data_condition} are not suitable for establishing the optimal weighted Sobolev regularity theory for \eqref{eq:non_divergence} without assuming the algebraic condition \eqref{25.11.03.19.38}.
A similar argument applies to the case $2-\frac{2(1+\mu)}{\alpha q}\in(1,2)$.

Combining \cite{CLSW2023trace, Dong_Kim2021} with Theorem \ref{thm_lp_diff_equiv}, we obtain an optimal weighted Sobolev regularity theory consistent with the setting of \cite{Dong_Kim2021}.
\begin{thm}
\label{23.05.29.13.46}
    Let $\alpha\in(0,1]$, $p,q\in(1,\infty)$, $w\in A_p(\bR^d)$. and $\tilde{w}\in A_q(\bR)$. Suppose that
    $$
    \tilde{W}_{\alpha}(\lambda):=\left(\int_{0}^{\lambda}\tilde{w}(t^{1/\alpha})\mathrm{d}t\right)^{1/q}\in \cI_o\left(0,1\right),
    $$
    \begin{equation}
    \label{25.11.03.19.52}
    \phi(\lambda):=\lambda^2\tilde{W}_{\alpha}(\lambda^{-2})\in\cI_o\left(\frac{d}{p}\left(R_w+\frac{1}{\Gamma_w}-1\right),M\right),
    \end{equation}
    for some positive integer $M > \frac{d}{p}\left(R_w+\frac{1}{\Gamma_w}-1\right)$.
    Then for a given initial data $u_0\in \mathscr{B}_{p,q}^{\phi}(w)$, there exists a unique solution $u\in L_q((0,T),\tilde{w}\,\mathrm{d}t;H_p^2(w))$ to the equation
    \begin{align}
    \label{23.05.29.13.09}
        \begin{cases}
        \partial_t^{\alpha}u(t,x)=\Delta u(t,x),\quad &(t,x)\in(0,T)\times\bR^d,\\
        u(0,x)=u_0(x),\quad &x\in\bR^d
    \end{cases}
    \end{align}
    with the estimate
    \begin{equation}
        \label{23.05.29.13.11}
        \||u|+|u_{xx}|+|\partial_t^{\alpha}u|\|_{L_q((0,T),\tilde{w}\,\mathrm{d}t;L_p(w))}\lesssim\|u_0\|_{\mathscr{B}_{p,q}^{\phi}(w)}.
    \end{equation}
\end{thm}
\begin{proof}
    By \cite[Proposition A.3]{CLSW2023trace},
    $$
    (H_p^2(\bR^d,w\,\mathrm{d}x),L_p(\bR^d,w\,\mathrm{d}x))_{\widetilde{W}_{\alpha},q}=B_{p,q}^{\phi}(\bR^d,w\,\mathrm{d}x),
    $$
    where $(H_p^2(\bR^d,w\,\mathrm{d}x),L_p(\bR^d,w\,\mathrm{d}x))_{\widetilde{W}_{\alpha},q}$ is the generalized interpolation space (for definition, see \cite[Section 3]{CLSW2023trace}).
First, we choose 
    $$
    v\in L_q(\bR_+,\tilde{w}\,\mathrm{d}t;H_p^2(w))\quad \text{and}\quad g\in L_q(\bR_+,\tilde{w}\,\mathrm{d}t;L_p(w))
    $$
    such that
    $$
    \begin{cases}
        \partial_t^{\alpha}v(t,x)=g(t,x),\quad &(t,x)\in \bR_+\times\bR^d,\\
        v(0,x)=u_0(x),\quad &x\in\bR^d,
    \end{cases}
    $$
    with the estimate
    \begin{align}   
    \label{23.08.08.17.17}    \|v\|_{L_q(\bR_+,\tilde{w}\,\mathrm{d}t;H_p^2(w))}+\|g\|_{L_q(\bR_+,\tilde{w}\,\mathrm{d}t;L_p(w))}\lesssim \|u_0\|_{B_{p,q}^{\phi}(\bR^d,w\,\mathrm{d}x)}.
    \end{align}
    The existence of $v$ and $g$ is established in \cite[Theorem 1.4]{CLSW2023trace}. By \cite[Theorem 2.2]{Dong_Kim2021}, there exists a unique solution $\tilde{v}\in L_q((0,T),\tilde{w}\,\mathrm{d}t;H_p^2(w))$ such that
    $$
    \begin{cases}
        \partial_t^{\alpha}\tilde{v}(t,x)=\Delta \tilde{v}(t,x) +(\Delta v(t,x)-g(t,x)),\quad &(t,x)\in (0,T)\times\bR^d,\\
        \tilde{v}(0,x)=0,\quad &x\in\bR^d,
    \end{cases}
    $$
    with the estimate
    \begin{align}
        \label{23.08.08.17.21}
\||\tilde{v}|+|\tilde{v}_{xx}|+|\partial_t^{\alpha}\tilde{v}|\|_{L_q((0,T),\tilde{w}\,\mathrm{d}t;L_p(w))}\lesssim \||\Delta v|+|g|\|_{L_q((0,T),\tilde{w}\,\mathrm{d}t;L_p(w))}.
    \end{align}
    If we put $u:=v+\tilde{v}$, then $u$ is a solution of \eqref{23.05.29.13.09} satisfying \eqref{23.05.29.13.11}.
    Therefore, \eqref{23.08.08.17.17}, \eqref{23.08.08.17.21} and Theorem \ref{thm_lp_diff_equiv} yield that
    \begin{align*}
    \||u|+|u_{xx}|+|\partial_t^{\alpha}u|\|_{L_q((0,T),\tilde{w}\,\mathrm{d}t;L_p(w))}&\lesssim \||\tilde{v}|+|\tilde{v}_{xx}|+|\partial_t^{\alpha}\tilde{v}|\|_{L_q((0,T),\tilde{w}\,\mathrm{d}t;L_p(w))}\\
    &\quad +\|v|+|v_{xx}|+|\partial_t^{\alpha}v|\|_{L_q((0,T),\tilde{w}\,\mathrm{d}t;L_p(w))}\\
        &\lesssim \||v|+|v_{xx}|+|g|\|_{L_q((0,T),\tilde{w}\,\mathrm{d}t;L_p(w))}\\
        &\lesssim \|v\|_{L_q(\bR_+,\tilde{w}\,\mathrm{d}t;H_p^2(w))}+\|g\|_{L_q(\bR_+,\tilde{w}\,\mathrm{d}t;L_p(w))}\\
        &\lesssim \|u_0\|_{B_{p,q}^{\phi}(w)}\simeq\|u_0\|_{\mathscr{B}_{p,q}^{\phi}(w)}.
    \end{align*}
The uniqueness follows from \cite[Theorem 2.2]{Dong_Kim2021}. The theorem is proved.
\end{proof}
In Theorem \ref{23.05.29.13.46}, we have stated the result specifically for the Laplace operator.
However, it can be extended to second-order elliptic operators \cite{Dong_Kim2021} or non-local operators.
Moreover, if one considers $B$-spaces instead of $\mathscr{B}$-spaces in Theorem \ref{23.05.29.13.46}, then the condition \eqref{25.11.03.19.52} is no longer required.

\subsection{Generalized Sobolev embedding theorem without weights}
It is well-known that if $p\gamma>d$ and $\gamma-d/p$ is not an integer, then the classical Sobolev-H\"older embedding theorem (see \textit{e.g.}, \cite[Chapter 13, Section 8, Theorem 1]{krylov2008lectures}) states that
\begin{equation}
\label{classical_embedding}
H_p^{\gamma}\hookrightarrow C^{\gamma-\frac{d}{p}}.
\end{equation}
In this subsection, we derive the generalized version of \eqref{classical_embedding} using Theorem \ref{thm_lp_diff_equiv}.

\begin{thm}
\label{23.03.28.16.27}
    Let $p,q\in[1,\infty)$. Suppose that
    $$
    \tilde{\phi}_p(\lambda):=\frac{\phi(\lambda)}{\lambda^{\frac{d}{p}}}\in\cI_o(0,M),
    $$
    for some $M>0$. Then
$$
\|f\|_{C^{\tilde{\phi}_p}_L}\lesssim \|f\|_{F_{p,q}^{\phi}},
$$
where
\begin{align*}
    \|f\|_{C^{\tilde{\phi}_p}_L}:=\|f\|_{L_{\infty}}+\sup_{|h|\leq1}\tilde{\phi}_p(|h|^{-1})\|\cD_{h}^Lf\|_{L_{\infty}},\quad L\geq M.
\end{align*}
\end{thm}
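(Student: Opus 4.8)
The plan is to split the claim into the two bounds $\|f\|_{L_\infty}\lesssim\|f\|_{F_{p,q}^\phi}$ and $\sup_{|h|\le1}\tilde\phi_p(|h|^{-1})\|\cD_h^Lf\|_{L_\infty}\lesssim\|f\|_{F_{p,q}^\phi}$, and to prove each from the inhomogeneous Littlewood--Paley decomposition $f=S_0f+\sum_{j\ge1}\Delta_jf$ together with Bernstein's inequality $\|g\|_{L_\infty}\lesssim R^{d/p}\|g\|_{L_p}$ for $g$ with spectrum in $B_R(0)$ (applied with $R=2$ to $S_0f$ and $R\simeq2^j$ to $\Delta_jf$). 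The only facts about $F_{p,q}^\phi$ I would use are two elementary ones: $\|S_0f\|_{L_p}\le\|f\|_{F_{p,q}^\phi}$, which is part of the definition of the norm, and $\sup_{j\ge1}\phi(2^j)\|\Delta_jf\|_{L_p}\le\|f\|_{F_{p,q}^\phi}$, obtained by taking $L_p$ norms in the pointwise bound $\phi(2^j)|\Delta_jf(x)|\le\big(\sum_{k\ge1}\phi(2^k)^q|\Delta_kf(x)|^q\big)^{1/q}$. Writing $\phi(2^j)=2^{jd/p}\tilde\phi_p(2^j)$ turns every sum into a geometric series controlled by the scaling property \eqref{w scaling} of $\tilde\phi_p\in\cI_o(0,M)$: there is $\varepsilon>0$ with $2^{j\varepsilon}\lesssim\tilde\phi_p(2^j)/\tilde\phi_p(1)\lesssim2^{j(M-\varepsilon)}$ for $j\ge0$, and by submultiplicativity $\tilde\phi_p(2^k)/\tilde\phi_p(2^j)\lesssim2^{(k-j)(M-\varepsilon)}$ for $k\ge j$ and $\lesssim2^{-(j-k)\varepsilon}$ for $j\ge k$.

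For the $L_\infty$ bound I would write $\|f\|_{L_\infty}\le\|S_0f\|_{L_\infty}+\sum_{j\ge1}\|\Delta_jf\|_{L_\infty}$, apply Bernstein, and rewrite the resulting series as $\sum_{j\ge1}\tilde\phi_p(2^j)^{-1}\,\phi(2^j)\|\Delta_jf\|_{L_p}$; since $\tilde\phi_p(2^j)^{-1}\lesssim2^{-j\varepsilon}$ this is $\lesssim\big(\sup_{j\ge1}\phi(2^j)\|\Delta_jf\|_{L_p}\big)\sum_{j}2^{-j\varepsilon}\lesssim\|f\|_{F_{p,q}^\phi}$, and the $S_0$ term is handled by Bernstein and $\|S_0f\|_{L_p}\le\|f\|_{F_{p,q}^\phi}$.

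For the difference estimate I would fix $h$, choose $k\ge0$ with $2^{-k-1}<|h|\le2^{-k}$, replace $\tilde\phi_p(|h|^{-1})$ by $\tilde\phi_p(2^k)$ using the doubling of $\tilde\phi_p$, and decompose $\cD_h^Lf=\cD_h^LS_0f+\sum_{j\ge1}\cD_h^L\Delta_jf$. From Lemma~\ref{22.12.28.17.28}$(iv)$ and the trivial bounds $\|M^{0,*}_\lambda f\|_{L_\infty}\le\|S_0f\|_{L_\infty}$, $\|M^*_{j,\lambda}f\|_{L_\infty}\le\|\Delta_jf\|_{L_\infty}$ I would get $\|\cD_h^LS_0f\|_{L_\infty}\lesssim|h|^L\|S_0f\|_{L_\infty}$ and, for $2^j|h|\le1$, $\|\cD_h^L\Delta_jf\|_{L_\infty}\lesssim(2^j|h|)^L\|\Delta_jf\|_{L_\infty}$; on the range $2^j|h|>1$ I would instead use the crude bound $\|\cD_h^L\Delta_jf\|_{L_\infty}\lesssim_L\|\Delta_jf\|_{L_\infty}$. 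Inserting Bernstein gives $\|\cD_h^L\Delta_jf\|_{L_\infty}\lesssim\min\big((2^j|h|)^L,1\big)2^{jd/p}\|\Delta_jf\|_{L_p}$, whence
\[
\tilde\phi_p(2^k)\|\cD_h^Lf\|_{L_\infty}\lesssim\tilde\phi_p(2^k)2^{-kL}\|S_0f\|_{L_p}+\sum_{j\ge1}\frac{\tilde\phi_p(2^k)}{\tilde\phi_p(2^j)}\min\big(2^{(j-k)L},1\big)\,\phi(2^j)\|\Delta_jf\|_{L_p}.
\]
Since $L\ge M$, the first term is $\lesssim2^{k(M-\varepsilon-L)}\|S_0f\|_{L_p}\le\|S_0f\|_{L_p}$; in the sum, for $1\le j\le k$ the coefficient is $\lesssim2^{(k-j)(M-\varepsilon-L)}\le2^{-(k-j)\varepsilon}$ and for $j>k$ it is $\lesssim2^{-(j-k)\varepsilon}$, so the sum is $\lesssim\big(\sup_{j\ge1}\phi(2^j)\|\Delta_jf\|_{L_p}\big)\sum_j2^{-|j-k|\varepsilon}\lesssim\|f\|_{F_{p,q}^\phi}$. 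Taking the supremum over $|h|\le1$ finishes the argument.

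I do not expect a genuine obstacle: the proof is elementary and the delicate points are purely organizational. One must extract $\sup_j\phi(2^j)\|\Delta_jf\|_{L_p}$ directly from the $F$-norm via the above pointwise inequality rather than through a Besov norm, since there need not be an embedding $F^\phi_{p,q}\hookrightarrow B^\phi_{p,q}$; the growing factor $(1+2^j|h|)^\lambda$ in Lemma~\ref{22.12.28.17.28}$(iv)$ must be avoided on the high-frequency range $2^j|h|>1$ by the crude difference bound; and the endpoints $p=1$ and $q=1$ require no extra work, since Bernstein and Lemma~\ref{22.12.28.17.28} are insensitive to $p\ge1$ and $\|S_0f\|_{L_1}\le\|f\|_{F^\phi_{1,1}}$ still holds by the definition of the norm.
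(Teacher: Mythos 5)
Your proof is correct, and it reaches the conclusion by a more self-contained route than the paper. The paper's proof uses the same Bernstein-type step ($\|\Delta_j f\|_{L_\infty}\lesssim 2^{jd/p}\|\Delta_j f\|_{L_p}$, obtained via H\"older against $\psi_{2^{j+i}}$) to establish $\|f\|_{B^{\tilde{\phi}_p}_{\infty,\infty}}\lesssim\|f\|_{B^{\phi}_{p,\tilde{q}}}$, but then it (a) invokes Theorem~\ref{thm_lp_diff_equiv} with $p=\infty$, $q=\infty$, $w\equiv1$ to convert $\|f\|_{B^{\tilde{\phi}_p}_{\infty,\infty}}$ into $\|f\|_{C^{\tilde{\phi}_p}_L}$, and (b) bounds $\|f\|_{B^{\phi}_{p,\tilde{q}}}$ by $\|f\|_{F^{\phi}_{p,q}}$ through a case split ($\tilde{q}=p$ with the pointwise $\ell^q\hookrightarrow\ell^p$ embedding when $p\ge q$, versus $\tilde{q}=q$ with Minkowski's integral inequality when $p<q$). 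You short-circuit both steps: for (b) you use the cleaner uniform embedding $F^{\phi}_{p,q}\hookrightarrow B^{\phi}_{p,\infty}$ obtained by taking $L_p$ norms in the pointwise bound $\phi(2^j)|\Delta_jf|\le\bigl(\sum_k\phi(2^k)^q|\Delta_kf|^q\bigr)^{1/q}$, which removes the case analysis entirely; and for (a) you re-prove inline the one direction of the $B_{\infty,\infty}\simeq C^{\tilde{\phi}_p}_L$ equivalence that is actually needed, via Lemma~\ref{22.12.28.17.28}$(iv)$, the crude bound $\|\cD_h^L\Delta_jf\|_{L_\infty}\lesssim\|\Delta_jf\|_{L_\infty}$ on the range $2^j|h|>1$, and the geometric series $\sum_j2^{-|j-k|\varepsilon}$ controlled by \eqref{w scaling}. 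The trade-off is transparency versus length: the paper's argument is shorter on the page because it leans on the full strength of Theorem~\ref{thm_lp_diff_equiv}, whereas yours uses only elementary ingredients (Bernstein, Peetre's maximal function bounds, and the scaling of $\tilde{\phi}_p\in\cI_o(0,M)$) and makes explicit why only the sup-type Besov information $\sup_j\phi(2^j)\|\Delta_jf\|_{L_p}$ is needed from the Triebel--Lizorkin norm. All the individual estimates you state check out, including the treatment of the $S_0$ term and the endpoints $p=1$, $q=1$.
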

\begin{proof}
By H\"older's inequality,
\begin{align*}
    |\Delta_jf(x)|&\leq \int_{\bR^d}|\psi_{2^{j-1}}(x-y)+\psi_{2^{j}}(x-y)+\psi_{2^{j+1}}(x-y)||\Delta_jf(y)|\mathrm{d}y\\
    &\leq \sum_{i=-1}^{1}\|\Delta_jf\|_{L_p}\|\psi_{2^{j+i}}\|_{L_{p'}}=C2^{jd/p}\|\Delta_jf\|_{L_p},\quad \forall p\in[1,\infty],
\end{align*}
where $1/p+1/p'=1$. This implies that
$$
\sup_{j\in\bN}\phi(2^{j})2^{-jd/p}\|\Delta_jf\|_{L_{\infty}}\lesssim \left(\sum_{j\in\bN}\phi(2^{j})^{\tilde{q}}\|\Delta_jf\|_{L_p}^{\tilde{q}}\right)^{1/\tilde{q}},\quad \forall p,\tilde{q}\in[1,\infty).
$$
Similarly, we have
$$
\|S_0f\|_{L_{\infty}}\lesssim \|S_0f\|_{L_p},\quad \forall p\in[1,\infty].
$$
Therefore
$$
\|f\|_{B_{\infty,\infty}^{\tilde{\phi}_p}}\lesssim \|f\|_{B_{p,\tilde{q}}^{\phi}}.
$$
Since $\tilde{\phi}_p\in\cI_o(0,M)$, by Theorem \ref{thm_lp_diff_equiv}, we have
$$
\|f\|_{B_{\infty,\infty}^{\tilde{\phi}_p}}\simeq \|f\|_{L_{\infty}}+\sup_{|h|\leq 1}\tilde{\phi}_p(|h|^{-1})\|\cD_{h}^Lf\|_{L_{\infty}(\bR^d)}=\|f\|_{C^{\tilde{\phi}_p}_L}.
$$
The remaining part of the proof focuses on proving
$$
\|f\|_{B_{p,\tilde{q}}^{\phi}}\lesssim \|f\|_{F_{p,q}^{\phi}}
$$
for an appropriate $\tilde{q}$.
We divide the proof into two cases.

$\bullet$ Case 1. $p\geq q$.

By taking $\tilde{q}=p$, we have
\begin{align*}
    \|f\|_{B_{p,p}^{\phi}}=\|f\|_{F_{p,p}^{\phi}}.
\end{align*}
Since
\begin{align*}
    \sum_{i=1}^{\infty}\phi(2^j)^p|\Delta_jf(x)|^p\leq \left(\sum_{i=1}^{\infty}\phi(2^j)^q|\Delta_jf(x)|^q\right)^{p/q},
\end{align*}
it is clear that
$$
\|f\|_{F_{p,p}^{\phi}}\leq \|f\|_{F_{p,q}^{\phi}}.
$$

$\bullet$ Case 2. $1\leq p<q$.

By taking $\tilde{q}=q$ and using Minkowski's inequality, we obtain
$$
\left(\sum_{j=1}^{\infty}\phi(2^j)^q\|\Delta_jf\|_{L_p}^q\right)^{1/q}\leq \left\|\left(\sum_{j=1}^{\infty}\phi(2^j)^q|\Delta_jf|^q\right)^{1/q}\right\|_{L_p},
$$
which implies
$$
\|f\|_{B_{p,q}^{\phi}}\leq \|f\|_{F_{p,q}^{\phi}}.
$$
The theorem is proved.
\end{proof}

\begin{rem}
$(i)$ In Theorem \ref{23.03.28.16.27}, if
$$
\phi(\lambda)=\lambda^{\gamma},
$$
then $\tilde{\phi}_p\in \cI_o(0,M)$ is equivalent to $\gamma-d/p>0$. Since $F_{p,2}^{\phi}=H_p^{\gamma}$ and
$$
C^{\tilde{\phi}_p}_{1}=C^{\gamma-\frac{d}{p}},\quad 0<\gamma-\frac{d}{p}<1,
$$
Therefore, Theorem \ref{23.03.28.16.27} includes \eqref{classical_embedding}.

$(ii)$ If $\phi$ is a Bernstein function satisfying
\begin{align}
\label{23.03.28.17.04}
    c_0\left(\frac{R}{r}\right)^{\delta_0}\leq \frac{\phi(R)}{\phi(r)} \quad \forall 0<r\leq R<\infty,
\end{align}
for some $c_0,\delta_0>0$, then $\phi\in \cI(\delta_0,1)$.
Hence, by Proposition \ref{23.05.25.13.12},
$$
\zeta(\lambda):=\frac{\phi(\lambda^2)^{\gamma/2}}{\lambda^{d/p}}\in \cI(\delta_0\gamma-d/p,\gamma-d/p),\quad \gamma>0.
$$
If $\gamma>\frac{d}{\delta_0p}$,
then by Theorem \ref{23.03.28.16.27} and Proposition \ref{23.05.25.13.12},
$$
(1-\phi(\Delta))^{-\gamma/2}L_p=H_p^{\phi,\gamma}\hookrightarrow C^{\zeta}_L,\quad L>\gamma-\frac{d}{p}.
$$
Since for $|h|\leq1$,
\begin{align*}
    \frac{c_0}{|h|^{\delta_0\gamma-\frac{d}{p}}}\leq\frac{\zeta(|h|^{-1})}{\zeta(1)},
\end{align*}
we have
$$
H_p^{\phi,\gamma}\, \hookrightarrow \, C^{\zeta}_L \, \hookrightarrow \, C^{\delta_0\gamma-d/p}.
$$
\end{rem}

\textbf{Acknowledgements.}
J.-H. Choi has been supported by a KIAS Individual Grant (MG102701) at Korea Institute for Advanced Study.
J. B. Lee has been supported by the NRF grants funded by the Korea government(MSIT) (No.2021R1C1C2008252 and No. 2022R1A4A1018904).
J. Seo has been supported by a KIAS Individual Grant (MG095802) at Korea Institute for Advanced Study.
K. Woo has been supported by the NRF grant funded by the Korea government(MSIT) (No. 2019R1A2C1084683 and No. 2022R1A4A1032094).

\end{document}